\numberwithin{equation}{section}
\def\div{\mbox{div}\ }
\newcommand{\beq}{\begin{equation}}
\newcommand{\eeq}{\end{equation}}
\newtheorem{thm}{Theorem}[section]
\newtheorem{lem}[thm]{Lemma}
\theoremstyle{definition}
\newtheorem{defn}[thm]{Definition}
\newtheorem{rem}[thm]{Remark}
\title{Stability and Decay for the 2D Anisotropic Navier-Stokes Equations with Fractional Horizontal Dissipation on $\mathbb{R}^2$}
\begin{document}
\begin{spacing}{1.2}
\author{
Zhibin Wang\footnote{School of Mathematical, Shandong University, Jinan, Shandong, 250000, P. R. China. \quad\quad\quad\quad\quad\quad Email: wang\_z\_b@mail.sdu.edu.cn.}
\and
Jiahong Wu \footnote{Department of Mathematics, University of Notre Dame, Notre Dame, IN, 46556, USA. \quad\quad\quad\quad\quad\quad Email: jwu29@nd.edu}
\and
Ning Zhu \footnote{School of Mathematical, Shandong University, Jinan, Shandong, 250000, P. R. China. \quad\quad\quad\quad\quad\quad Email: ning.zhu@sdu.edu.cn.}
}
\date{}
\maketitle

\begin{abstract}
The stability problem for the 2D Navier-Stokes equations with dissipation in only one direction on $\mathbb R^2$ is not fully understood. This dissipation is in the intermediate regime between the fully dissipative Navier-Stokes and the invisicd Euler. Navier-Stokes solutions in $\mathbb R^2$ decay algebraically in time while Euler solutions can grow rather rapidly in time. This paper solves the fundamental stability and large-time behavior problem on the anisotropic Navier-Stokes with fractional dissipation $\Lambda_1^{2s}$ for all $0\le s<1$. The case $s=1$ corresponds to the standard one directional dissipation $\partial_1^2$. Different techniques are developed to treat different ranges of fractional exponents: $0\le s\le \frac34$, $\frac34<s<\frac{11}{12}$, and $\frac{11}{12} \le s <1$. The final range is the most difficult case, for which we introduce the spatial polynomial $A_2$ weights and exploit the boundedness of Riesz transforms on weighted $L^2$-spaces. 
\par\vspace{2ex}

\noindent \textbf{Keywords:} 2D anisotropic Navier-Stokes equations; Fractional horizontal dissipation; Stability; Decay estimate; $A_p$ weight.

\par\vspace{1ex}

\noindent \textbf{Mathematics Subject Classification (2020):} 35Q30; 35B35; 35B40; 76D05; 76E09.
\end{abstract}

\section{Introduction}

This paper studies the two-dimensional (2D) incompressible Navier-Stokes equations with
fractional dissipation acting only in one spatial direction, given by
\begin{equation}\label{NS}
	\left\{
	\begin{aligned}
		&\partial_t u + \nu \Lambda_1^{2s} u + u\cdot\nabla u = - \nabla p,
		\qquad (x_1,x_2)\in \mathbb{R}^2,\ t>0,\\
		&\nabla\cdot u = 0,\\
		&u|_{t=0} = u_0,
	\end{aligned}
	\right.
\end{equation}
where \(u(t,x)=(u_1(t,x_1,x_2),u_2(t,x_1,x_2))\) denotes the velocity field,
\(p(t,x)\) is the pressure, and \(\nu>0\) is the viscosity coefficient.
The operator \(\Lambda_1^{2s}\) denotes the fractional derivative in the
\(x_1\)-direction, defined via the Fourier transform by
\[
\widehat{\Lambda_1^{2s} f}(\xi)
= |\xi_1|^{2s}\widehat{f}(\xi),
\qquad 0\le s \le 1.
\]
When $s = 1$, equation \eqref{NS} reduces to the standard 2D
Navier-Stokes equations with  horizontal dissipation:
\begin{equation}\label{NS00}
	\left\{
	\begin{aligned}
		&\partial_t u - \nu \partial_1^{\,2} u + u\cdot\nabla u = - \nabla p,
		\qquad (x_1,x_2)\in \mathbb{R}^2,\ t>0,\\
		&\nabla\cdot u = 0.
	\end{aligned}
	\right.
\end{equation}

Such anisotropic dissipative models arise naturally in fluid dynamics when there is a significant disparity between horizontal and vertical length scales. In geophysical flows, thin-layer fluids, and boundary layer regimes, the dominant dissipative mechanisms may be effectively confined to a preferred direction after appropriate rescaling. Classical examples of this phenomenon include reduced models derived from the Navier-Stokes equations under anisotropic scaling limits, such as the Prandtl-type systems \cite{Pran,Sch,EE}. Fractional dissipation in a single direction can also be viewed as an effective description of nonlocal mixing or anomalous diffusion along that direction, and thus has some meaningful physical interpretation.

From a mathematical perspective, anisotropic dissipation places the system in an intermediate regime between the fully dissipative Navier-Stokes equations and the inviscid Euler equations. For the standard 2D Navier-Stokes equations in the whole space $\mathbb{R}^2$ with full Laplacian dissipation, it is well known that solutions exhibit algebraic decay in time. Such decay can be derived using the Fourier splitting method introduced by Schonbek \cite{SCho4}, which shows that the system is asymptotically stable and that solutions decay as time grows. In particular, in two dimensions one has the decay estimates
\begin{align*}
	\|u(\cdot,t)\|_{L^q} \leq C t^{-1 + 1/q}, \quad 2 \leq q \leq \infty \quad\mbox{and}\quad 
	\|\nabla u(\cdot,t)\|_{L^q} \leq C t^{-3/2 + 1/q}, \quad 2 \leq q < \infty,
\end{align*}
as established in \cite{SCho4}. These estimates reflect the strong stabilizing effect of isotropic viscosity on the large-time dynamics of the flow.

In contrast, the 2D Euler equations, namely (\ref{NS}) with $\nu=0$, exhibit fundamentally different behavior. While Yudovich’s classical theory guarantees global existence and uniqueness of solutions with bounded vorticity, higher Sobolev norms of solutions typically grow in time. In particular, the gradient of the vorticity may experience substantial growth, and determining its precise growth rate has been a central problem in the analysis of inviscid flows. Seminal works by Sver\'ak and Kiselev constructed smooth initial vorticity supported in a disk for which the vorticity gradient grows double exponentially in time \cite{KS}. In periodic domains, exponential-in-time growth of higher Sobolev norms has been established \cite{Denisov1,Denisov2,Denisov3,Zlatos}. In the whole space $\mathbb{R}^2$, linear-in-time growth was obtained by Choi and Jeong \cite{CJ}, as well as by Drivas, Elgindi, and Jeong \cite{DEJ}, and more recently, superlinear growth examples were constructed by Jeong, Yao, and Zhou \cite{JYaoZhou}. These results demonstrate that, despite global well-posedness, the Euler dynamics can exhibit strong instability at the level of higher regularity.

The model considered in this paper, with dissipation acting only in one direction, lies precisely between these two classical regimes. On the one hand, by Yudovich-type arguments, the system admits a unique global solution for sufficiently regular initial data. On the other hand, the dissipation is too weak to immediately imply decay properties comparable to those of the fully dissipative Navier-Stokes equations. This raises a fundamental question concerning stability and large-time behavior: does the solution behave more like Navier-Stokes, exhibiting decay and stabilization, or more like Euler, allowing growing high-frequency activity?

This question is not trivial. The absence of dissipation in one spatial direction
creates substantial difficulties in closing energy estimates at the level of
higher derivatives. Although dissipation in one direction is present, it fails to 
directly control all components of the nonlinear interactions. As a consequence, the best available a priori bounds suggest that certain Sobolev norms of solutions may grow exponentially in time.

To illustrate this difficulty, consider the vorticity formulation of \eqref{NS},
\begin{align*}
	\partial_t \omega + u \cdot \nabla \omega + \Lambda_1^{2s} \omega = 0,
\end{align*}
where the vorticity $\omega \overset{\mathrm{def}}{=} \nabla \times u$.
It is well known that the $L^1 \cap L^\infty$ norm of $\omega$ remains uniformly bounded.
However, when one applies the energy method to estimate $\nabla \omega$, one obtains
\begin{align}\label{diff_res}
	\frac{d}{dt} \|\nabla \omega(t)\|_{L^2}^2
	+ 2\nu \|\Lambda_1^s \nabla \omega(t)\|_{L^2}^2
	=& -2 \int_{\mathbb{R}^2} \nabla u \cdot \nabla \omega \cdot \nabla \omega \, dx \nonumber\\
	=& -2\int_{\mathbb{R}^2} \partial_1 u_1\, (\partial_1 \omega)^2 \, dx
	- 2 \int_{\mathbb{R}^2} \partial_1 u_2\, \partial_2 \omega\, \partial_1 \omega \, dx \nonumber\\
	&\quad -2\int_{\mathbb{R}^2} \partial_2 u_1\, \partial_1 \omega\, \partial_2 \omega \, dx
	- 2 \int_{\mathbb{R}^2} \partial_2 u_2\, (\partial_2 \omega)^2 \, dx .
\end{align}
When $s=1$, the last two terms on the right-hand side of \eqref{diff_res} do not admit
time-integrable upper bounds. As a result, the dissipation is insufficient to prevent
potential growth of $\|\nabla \omega(t)\|_{L^2}$. Understanding whether this apparent growth reflects a genuine instability or can be suppressed by more delicate anisotropic mechanisms is a central motivation of this paper.

It is also worth emphasizing that the stability problem here
depends crucially on the spatial domain.
When the spatial domain is $\mathbb{T}\times\mathbb{R}$, the work of Dong, Wu, Xu, and Zhu \cite{Dong} established the stability of solutions to the 2D Navier-Stokes equations
with dissipation acting only in one direction.
Their analysis relies on decomposing the velocity field into its horizontal average
and oscillatory components, and they showed that the $H^1$-norm of the oscillatory part
decays exponentially in time. However, this approach fails in the whole-space setting $\mathbb{R}^2$. In contrast to the partially periodic case, one can no longer separate the zero
horizontal Fourier mode from the oscillatory component and no Poincar\'e type inequality can be applied in the whole space $\mathbb R^2$. 

This paper resolves the stability problem for \eqref{NS} for the full range
\(0 \le s < 1\).
Different analytical techniques are developed for different ranges of the fractional
dissipation exponent \(s\). For \(0 \le s \le \tfrac{3}{4}\), we observe that although a larger value of \(s\) strengthens
the dissipation and is advantageous for global regularity, the situation is more subtle
for stability problems.
In this regime, a smaller value of \(s\) actually reduces the number of derivatives
required to establish the time integrability of nonlinear terms.
As a consequence, a suitable redistribution of derivatives leads to improved
time-integrability properties.
This mechanism explains why weaker fractional dissipation can nevertheless be
sufficient to establish stability in this range.

To extend stability beyond \(s > \tfrac{3}{4}\), we combine stability estimates with
large-time decay analysis.
This naturally requires additional assumptions on the initial perturbation,
specifically that it belongs to a negative-order Sobolev space in the horizontal
direction, namely
\[
\Lambda_1^{-\sigma} u_0 \in L^2, \qquad 0 < \sigma < \tfrac{1}{2}.
\]
The restriction \(\sigma < \tfrac{1}{2}\) is essential in order to propagate the negative
regularity along the flow.
By coupling decay estimates with stability arguments, we are able to establish
stability of \eqref{NS} for
\[
\tfrac{3}{4} < s < \tfrac{11}{12}.
\]

Finally, to treat the remaining regime \(\tfrac{11}{12} \le s < 1\) (in fact, this approach
also applies to all \(s > \tfrac{3}{4}\)), we introduce a new strategy based on weighted
energy estimates.
More precisely, we employ a weight function in the vertical variable \(x_2\) that grows
polynomially of order \(\gamma\) outside the unit disk and is truncated to \(1\) inside it.
The analysis in this regime makes essential use of the theory of \(A_p\) weights and the
boundedness of Riesz transforms on the corresponding weighted \(L^2\) spaces.
These tools allow us to control nonlocal terms arising from the Biot--Savart law in the
presence of spatial weights, which is crucial for closing the weighted energy estimates.
Under the assumption that the initial data and its derivatives exhibit suitable decay
in the \(x_2\)-direction, we prove that solutions to \eqref{NS} are stable and enjoy
optimal decay rates in a neighborhood of the trivial solution.

We now summarize our main results, organized according to different ranges of the
fractional dissipation exponent \(s\). We first consider the range \(0 \le s \le \tfrac{3}{4}\).
In this regime, small initial data in \(H^k(\mathbb{R}^2)\) with \(k \ge 3\) is sufficient to
guarantee global stability.

\begin{thm}\label{thm1}
	Consider the anisotropic Navier-Stokes system \eqref{NS} with viscosity coefficient
	\(\nu>0\).
	Let \(0 \le s \le \tfrac{3}{4}\) and assume the initial data
	\(u_0 \in H^{k}(\mathbb{R}^2)\), \(k \ge 3\), satisfies \(\nabla\cdot u_0 = 0\).
	Then there exists a sufficiently small constant \(\varepsilon>0\) such that if
	\(\|u_0\|_{H^k} \le \varepsilon\), the system admits a unique global solution
$$
	u \in C([0,\infty); H^k(\mathbb{R}^2)),
$$
	which satisfies the uniform bound
	\begin{align*}
		\|u(t)\|_{H^k}
		+ \nu^{\frac12}
		\left(
		\int_0^{t} \|\Lambda_1^s u(\tau)\|_{H^k}^2 \, d\tau
		\right)^{\frac12}
		\le C \varepsilon,
		\qquad \forall\, t \ge 0.
	\end{align*}
\end{thm}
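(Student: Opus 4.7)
\medskip
\noindent\textbf{Proof proposal.}
The plan is a classical small-data bootstrap. Local well-posedness in $H^k(\mathbb{R}^2)$ for $k\ge 3$ follows from a Friedrichs-type approximation together with the standard continuation criterion, so matters reduce to establishing an a priori $H^k$ bound on any time interval on which the solution exists. Testing \eqref{NS} against $u$ and using $\nabla\cdot u=0$ yields the basic identity $\tfrac{d}{dt}\|u\|_{L^2}^2 + 2\nu\|\Lambda_1^s u\|_{L^2}^2 = 0$. For the higher-order part, applying $\partial^\alpha$ for each multi-index $|\alpha|\le k$, pairing with $\partial^\alpha u$, summing, and integrating by parts eliminates the leading transport contribution, leaving
$$\tfrac{1}{2}\tfrac{d}{dt}\|u\|_{H^k}^2 + \nu\|\Lambda_1^s u\|_{H^k}^2 \;=\; -\sum_{|\alpha|\le k}\int [\partial^\alpha,u\cdot\nabla]u\cdot\partial^\alpha u\,dx \;=:\; \mathcal{N}(u).$$

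The whole argument then hinges on the trilinear bound
$$|\mathcal{N}(u)| \;\le\; C\,\|u\|_{H^k}\,\|\Lambda_1^s u\|_{H^k}^2,$$
since once this is in hand Young's inequality absorbs the right-hand side into the dissipation as soon as $\|u\|_{H^k}$ is controllably small. To prove it, I would expand the commutator by the Leibniz rule as a finite sum of trilinear integrals $\int\partial^\beta u_j\,\nabla\partial^{\alpha-\beta}u\cdot\partial^\alpha u\,dx$ with $1\le|\beta|\le|\alpha|\le k$, and for each such integral apply an anisotropic H\"older inequality distributing the three factors among the mixed spaces $L^\infty_{x_1}L^2_{x_2}$, $L^2_{x_1}L^\infty_{x_2}$, and $L^2(\mathbb{R}^2)$. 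On each factor I would invoke the one-dimensional interpolations
$$\|f\|_{L^\infty_{x_1}L^2_{x_2}} \le C\,\|f\|_{L^2}^{1-\frac{1}{2s}}\|\Lambda_1^s f\|_{L^2}^{\frac{1}{2s}}\;(s>\tfrac12), \qquad \|f\|_{L^2_{x_1}L^\infty_{x_2}}\le C\,\|f\|_{L^2}^{1/2}\|\partial_2 f\|_{L^2}^{1/2},$$
supplemented by their Littlewood--Paley/Besov substitutes when $s\le\tfrac12$. Whenever a bare vertical derivative $\partial_2 u_2$ appears (otherwise uncontrollable since vertical dissipation is absent), I would replace it using $\nabla\cdot u=0$ by $-\partial_1 u_1$, thereby converting it into a horizontal derivative that can be paired with $\Lambda_1^s$ in the dissipative norm. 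The cap $s\le 3/4$ enters precisely in balancing the resulting interpolation exponents: only one derivative beyond the top order $k$ must be redistributed, and the slack in the scaling widens as $s$ shrinks, which is exactly the informal principle emphasized in the introduction.

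Closing the bootstrap is then routine. Setting $T^\ast := \sup\{T\ge 0:\sup_{t\in[0,T]}\|u(t)\|_{H^k}\le \nu/(2C)\}$, with $C$ the trilinear constant, the absorption argument gives $\tfrac{d}{dt}\|u\|_{H^k}^2 + \nu\|\Lambda_1^s u\|_{H^k}^2\le 0$ on $[0,T^\ast]$, whence
$$\|u(t)\|_{H^k}^2 + \nu\int_0^t\|\Lambda_1^s u(\tau)\|_{H^k}^2\,d\tau \;\le\; \|u_0\|_{H^k}^2 \;\le\; \varepsilon^2.$$
Picking $\varepsilon\le \nu/(4C)$ prevents the defining inequality for $T^\ast$ from ever saturating, so $T^\ast=\infty$ and the announced uniform bound follows. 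The main obstacle I foresee is in the trilinear estimate itself: specifically, the subterms in which the transporting component is $u_2$, whose vertical derivative carries no direct dissipation, and in which only one horizontal derivative remains available on the top-order factor. Coordinating the divergence-free substitution with an anisotropic interpolation whose exponents are tight --- and doing so uniformly across $s\in[0,\tfrac34]$, including the endpoint $s=0$ where $\Lambda_1^0$ reduces to the identity and only a linear damping is available --- is the delicate step.
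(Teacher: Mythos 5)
Your overall architecture --- local well-posedness, the $L^2$ identity, the $H^k$ commutator estimate, the trilinear bound $|\mathcal N(u)|\le C\|u\|_{H^k}\|\Lambda_1^s u\|_{H^k}^2$, and the bootstrap absorption with $\varepsilon\lesssim \nu/C$ --- is exactly the paper's, and you correctly identify the trilinear bound as the entire content of the theorem. The gap is in how you propose to prove it. Your scheme (anisotropic H\"older into the slots $L^\infty_{x_1}L^2_{x_2}$, $L^2_{x_1}L^\infty_{x_2}$, $L^2$, plus one-dimensional interpolation, plus the substitution $\partial_2 u_2=-\partial_1 u_1$) cannot produce the \emph{square} of the dissipative norm once $s>\tfrac12$. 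Count the horizontal-derivative budget: after the divergence-free substitution each trilinear integral contains exactly one explicit $\partial_1$, which yields one full factor of $\|\Lambda_1^s u\|_{H^k}$ by writing $\partial_1=\Lambda_1^s\Lambda_1^{1-s}$ and absorbing $\Lambda_1^{1-s}$ into the spare room of $H^k$. Every additional horizontal derivative must come from Sobolev embedding in $x_1$, and H\"older caps the total embedding cost at $\sum_i(\tfrac12-\tfrac1{p_i})=\tfrac12$. For $s\le\tfrac12$ one can spend exactly $s$ of that budget on a single factor (the $L^{2/(1-2s)}_{x_1}$ slot) and obtain a second \emph{full} power; this is what the paper does in its Cases I--II and is consistent with your sketch. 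But for $s>\tfrac12$ no admissible $L^p_{x_1}$ slot costs as much as $s$ derivatives, so the remaining two factors can only contribute fractional powers of $\|\Lambda_1^s u\|_{H^k}$ totalling at most $\tfrac1{2s}<1$. Your scheme therefore tops out at $\|u\|_{H^k}^{2-\frac1{2s}}\|\Lambda_1^s u\|_{H^k}^{1+\frac1{2s}}$, and an exponent strictly below $2$ on the dissipative norm is not time-integrable from the energy inequality, so the bootstrap does not close. Deferring this to unspecified ``Littlewood--Paley/Besov substitutes'' is precisely where the missing idea lives.

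What the paper actually does on $\tfrac12\le s\le\tfrac34$ (its Case III of Lemma \ref{observ lem}) is a low/high splitting in the \emph{horizontal frequency} $\xi_1$. On low frequencies the single $\partial_1$ is factored as $|\xi_1|=|\xi_1|^{3/2-s}\cdot|\xi_1|^{s-1/2}$: the piece $|\xi_1|^{3/2-s}\chi(\xi_1)\le C|\xi_1|^{s}\chi(\xi_1)$ (this is exactly where $s\le\tfrac34$ enters) yields a full dissipative power on one factor, while the leftover $\Lambda_1^{s-1/2}$ is pushed onto the \emph{product} of the other two factors and distributed by the fractional Leibniz rule (Lemma \ref{lem1}), after which an $L^4_{x_1}\times L^4_{x_1}$ pairing and interpolation give exponents on $\|\Lambda_1^s\partial_\ell^\beta f\|$ and $\|\Lambda_1^s\partial_\ell^k f\|$ summing to exactly one, for a total power of two. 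On high frequencies one gains $|\xi_1|^{s-1/2}\ge1$ for free and uses the homogeneous product law (Lemma \ref{lem2}) to split $\Lambda_1^{s/2}$ onto each factor. Neither device is in your proposal, and without one of them (or an equivalent) the range $\tfrac12<s\le\tfrac34$ is not covered. The ranges $0\le s\le\tfrac14$ and $\tfrac14\le s\le\tfrac12$ are recoverable along the lines you sketch, though note also that your displayed inequality $\|f\|_{L^\infty_{x_1}L^2_{x_2}}\le C\|f\|_{L^2}^{1-\frac1{2s}}\|\Lambda_1^s f\|_{L^2}^{\frac1{2s}}$ is only valid for $s>\tfrac12$ and, when applied to a top-order factor, spends more than the $H^k$ budget allows, so it cannot be the workhorse even there.
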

When the dissipation exponent satisfies \(s>\tfrac{3}{4}\), energy estimates alone are no
longer sufficient to control the nonlinear terms.
In this regime, stability must be coupled with decay estimates, which in turn requires
additional assumptions on the initial perturbation in the horizontal direction.
The following theorem establishes the nonlinear stability and large-time decay of
solutions for dissipation exponents in the range
\[
\tfrac{3}{4} < s < \tfrac{11}{12}.
\]

\begin{thm}\label{thm3}
	Let
	\[
	\tfrac{3}{4} < s < \tfrac{5}{12} + \sigma,
	\qquad \tfrac{1}{3} < \sigma < \tfrac{1}{2},
	\]
	and assume the initial data \(u_0 \in H^k(\mathbb{R}^2)\) with
	\[
	k > \max\left\{\frac{2\sigma}{1-2\sigma} + 1,\; 9\right\}
	\]
	is divergence-free.
	Suppose that \(u_0\) admits a stream function \(\psi_0\) such that
	\[
	u_0 =(u_{1,0},u_{2,0}) =\nabla^\perp \psi_0 := (-\partial_2 \psi_0, \partial_1 \psi_0),
	\]
	and that there exists a sufficiently small constant \(\varepsilon>0\) such that
	\begin{equation}\label{ini_condi}
		\|u_0\|_{H^k}
		+ \|\Lambda_1^{-\sigma} \psi_0\|_{L^2}
		+ \|\Lambda_1^{-\sigma} u_{1,0}\|_{L^2}
		+ \|\Lambda_1^{-\sigma} \partial_2u_{1,0}\|_{L^2}
		\le \varepsilon.
	\end{equation}
	Then the system admits a unique global-in-time solution
	\[
	u \in C([0,\infty); H^k(\mathbb{R}^2)),
	\]
	satisfying
	\[
	\| u(t)\|_{H^k}
	+ \nu^{\frac12}
	\left(
	\int_0^t \|\Lambda_1^{s} u(\tau)\|_{H^k}^2 \, d\tau
	\right)^{\frac12}
	\le C\varepsilon,\quad
    \|\Lambda_1^{-\sigma} u\|_{L^2} \le C\varepsilon,
	\qquad \forall\, t \ge 0,
	\]
	together with the decay estimates
	\begin{align*}
		\|u_1(t)\|_{L^2}
		&\le C_0 \varepsilon (1+t)^{-\sigma/2s}, \\
		\|u_2(t)\|_{L^2}
		&\le C_0 \varepsilon (1+t)^{-(\sigma+\frac23)/2s}, \\
		\|\partial_2 u_1(t)\|_{L^2}
		&\le C_0 \varepsilon (1+t)^{-\sigma/2s}, \\
		\|\partial_1 u_1(t)\|_{L^2}
		&\le C_0 \varepsilon (1+t)^{-(\sigma+1)/2s}, \\
		\|\partial_1 u_2(t)\|_{L^2}
		&\le C_0 \varepsilon (1+t)^{-2\sigma/s},
	\end{align*}
	where \(C_0>0\) is a universal constant.
\end{thm}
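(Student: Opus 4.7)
The proof will rest on three pillars: (i) propagation of a negative-order horizontal Sobolev norm of the velocity and of the stream function, (ii) a Schonbek-type Fourier splitting that converts this negative-order control into algebraic time decay, and (iii) a bootstrap that feeds the decay rates back into the highest-order energy estimate, which is no longer self-closing once $s>\tfrac34$.

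My plan is to run a continuity argument on the quantity
\begin{equation*}
E(t) := \|\Lambda_1^{-\sigma} u(t)\|_{H^k}^2 + 2\nu \int_0^t \|\Lambda_1^{s-\sigma} u(\tau)\|_{H^k}^2 \, d\tau,
\end{equation*}
simultaneously with the five decay norms appearing in the conclusion. First I would apply $\Lambda_1^{-\sigma}\partial^\alpha$ for $|\alpha|\le k$ to the momentum equation in \eqref{NS} and use Bony's paraproduct decomposition in the horizontal variable together with anisotropic Sobolev embeddings to estimate $\Lambda_1^{-\sigma}(u\cdot\nabla u)$ paired against $\Lambda_1^{-\sigma}u$. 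The hypothesis $0<\sigma<\tfrac12$ enters precisely here: $\Lambda_1^{-\sigma}$ acts by less than half a derivative in $x_1$, which is the regime in which the product of two $L^2$-based functions keeps its negative horizontal regularity and the resulting loss can be absorbed by the dissipation $\nu\|\Lambda_1^{s-\sigma}\nabla u\|_{H^k}^2$. After integration by parts in $x_1$ one isolates a bad remainder roughly of the form $\int_0^t\|\partial_1 u(\tau)\|_{L^\infty_{x_1}L^2_{x_2}}\,\|\Lambda_1^{-\sigma}\nabla u(\tau)\|_{H^k}^2\,d\tau$, or an anisotropic variant involving $\|\partial_1 u_1\|$ and $\|\partial_1 u_2\|$; time integrability of this remainder is exactly what the decay estimates are designed to supply.

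Next I would carry out the Fourier splitting. For the base rate on $u_1$, start from
\begin{equation*}
\frac{d}{dt}\|u_1(t)\|_{L^2}^2+2\nu\|\Lambda_1^s u_1(t)\|_{L^2}^2\le R_1(t),
\end{equation*}
split frequency space by $S(t):=\{\xi:\ |\xi_1|^{2s}\le g(t)\}$ with $g(t)\sim(1+t)^{-1}$, and observe that on $S(t)$ one has $\int_{S(t)}|\widehat{u_1}|^2\,d\xi\le g(t)^{\sigma/s}\|\Lambda_1^{-\sigma}u_1\|_{L^2}^2\lesssim\varepsilon^2 g(t)^{\sigma/s}$, where integrability of $|\xi_1|^{-2\sigma}$ near $\xi_1=0$ is again ensured by $\sigma<\tfrac12$. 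The usual Gronwall manipulation produces $\|u_1(t)\|_{L^2}\lesssim\varepsilon(1+t)^{-\sigma/2s}$. The faster rate for $u_2$ exploits the stream function: since $u_{0,2}=\partial_1\psi_0$, the hypothesis $\|\Lambda_1^{-\sigma}\psi_0\|_{L^2}\le\varepsilon$ effectively endows $u_{0,2}$ with one extra $\partial_1$. I would propagate a suitable negative horizontal norm of $\psi$ along the flow, rerun the splitting on $u_2$, and interpolate between the propagated norm and an intermediate vertical derivative of $u_2$; optimizing the interpolation exponent produces the $\sigma+\tfrac23$ in the stated rate. The remaining three rates follow from parallel splittings on the differentiated equations, using the hypothesis on $\Lambda_1^{-\sigma}\partial_2 u_{0,1}$ for $\partial_2 u_1$, the extra damping from combining $\partial_1$ with $\Lambda_1^{2s}$ for $\partial_1 u_1$, and the divergence relation $\partial_1 u_1=-\partial_2 u_2$ together with the already-proven decay of $u_2$ to bootstrap the rate $2\sigma/s$ for $\partial_1 u_2$.

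The main obstacle I anticipate is closing the bootstrap for $E(t)$ uniformly in $t$ as $s$ approaches $\tfrac{11}{12}$. With the above decay rates in hand, the bad remainder from Step~1 translates into a scalar time integral $\int_0^t(1+\tau)^{-a(s,\sigma)}\,d\tau$, and convergence of this integral is equivalent to a single algebraic inequality between $s$ and $\sigma$. Careful accounting should show this inequality is precisely $s<\tfrac{5}{12}+\sigma$, which is the stated upper endpoint. Once this check is in place, the continuity argument started from $E(0)\lesssim\varepsilon^2$ closes the bootstrap and simultaneously produces the global $H^k$-solution, the uniform bound on $E(t)$, and all five decay estimates. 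The most delicate part is the combinatorial bookkeeping that tracks which decay rate is required to tame which nonlinear contribution; this is where most of the technical effort will go.
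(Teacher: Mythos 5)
Your overall architecture --- a bootstrap that couples the $\|\Lambda_1^{-\sigma}u\|_{H^k}$ energy estimate with the five decay rates, with $\sigma<\tfrac12$ ensuring propagation of the negative horizontal regularity and the stream function supplying the extra decay of $u_2$ --- matches the paper. But the paper derives all five decay rates from the Duhamel formula combined with the fractional heat semigroup estimate of Lemma~\ref{lem4} and the convolution bounds of Lemma~\ref{decay lem}, not from Fourier splitting. Your splitting scheme is plausible for the base rate $\|u_1\|_{L^2}\lesssim\varepsilon(1+t)^{-\sigma/2s}$, but precisely at the enhanced rates --- which carry the whole weight of the theorem, since they are what make the nonlinear remainders time-integrable --- your plan has genuine gaps.

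First, for $u_2$ you propose to ``propagate a suitable negative horizontal norm of $\psi$ along the flow.'' The paper never propagates any norm of $\psi$: the hypothesis on $\Lambda_1^{-\sigma}\psi_0$ is used only in the linear semigroup term (where $u_{0,2}=\partial_1\psi_0$ buys one extra $\partial_1$), while the enhanced decay of the Duhamel integral comes from the operator structure $-\partial_1\Delta^{-1}\nabla\times\operatorname{div}(u\otimes u)$, i.e.\ an extra $\partial_1$ sitting on the nonlinearity itself. Propagating $\|\Lambda_1^{-\sigma}\psi\|_{L^2}$ in time would force you to estimate $\Lambda_1^{-\sigma}\Delta^{-1}$ applied to the nonlinearity, and you offer no closing estimate for that nonlocal negative-order operator. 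Moreover the exponent $\tfrac23$ is not produced by ``optimizing an interpolation exponent'': in the paper it comes from the specific balancing $\Lambda^{-1}\Lambda_1^{4/3-\sigma}\preceq\Lambda^{-\frac12-\sigma}\Lambda_1^{5/6}$ combined with the product law of Lemma~\ref{lem2} in $\dot H^{-\frac12-\sigma}$ --- which is also where the hypothesis $\sigma>\tfrac13$ enters, a constraint your sketch never accounts for. Second, your claim that the rate $(1+t)^{-2\sigma/s}$ for $\partial_1u_2$ follows from ``the divergence relation $\partial_1u_1=-\partial_2u_2$ together with the decay of $u_2$'' is wrong: incompressibility relates $\partial_1u_1$ to $\partial_2u_2$ and says nothing about $\partial_1u_2$. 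In the paper this rate requires applying $\partial_1$ to the $u_2$-equation, extracting $\Lambda_1^{4\sigma}$ from the semigroup, and splitting the time integral near and away from $\tau=t$. Finally, a structural correction: the top-order estimate for $\|\Lambda_1^{-\sigma}\partial_m^ku\|_{L^2}$ closes by absorption into the dissipation alone for all $s<\sigma+\tfrac12$; it is the low-order term $M_0$ in the $\|\Lambda_1^{-\sigma}u\|_{L^2}$ estimate, together with the $\partial_2u_1$ decay, that forces $s<\tfrac{5}{12}+\sigma$, not a remainder at order $k$ as you suggest. These are the points where the real work of the proof lives, and they are missing from the proposal.
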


A special feature of this result is that the second velocity component $u_2$ decays faster than the first component $u_1$. This anisotropic decay is revealed through the use of the stream function and is a key ingredient in the proof of the theorem. The enhanced decay of $u_2$ plays a crucial role in closing the nonlinear estimates. Moreover, since the Navier-Stokes equations do not propagate negative Sobolev regularity of the form $\Lambda_1^{-\sigma}u$ for $\sigma \ge \tfrac12$, the restriction $\sigma < \tfrac12$ is necessary.

\begin{rem} \label{buchongRE}
	We further remark that, even for fractional dissipation in the range $0 < s < \tfrac{3}{4}$, under assumptions analogous to those of Theorem~\ref{thm3}, we can also establish the following decay estimates:
\begin{align}
	\|u_1(t)\|_{L^2}
	&\le C_0 \varepsilon (1+t)^{-\sigma/2s}, \nonumber\\
	\|u_2(t)\|_{L^2}
	&\le C_0 \varepsilon (1+t)^{-(4\sigma+1)/4s}, \label{rem_u2_decay}\\
	\|\partial_2 u_1(t)\|_{L^2}
	&\le C_0 \varepsilon (1+t)^{-\sigma/2s}, \nonumber\\
	\|\partial_1 u_1(t)\|_{L^2}
	&\le C_0 \varepsilon (1+t)^{-(\sigma+1)/2s}, \nonumber\\
	\|\partial_1 u_2(t)\|_{L^2}
	&\le C_0 \varepsilon (1+t)^{-(2\sigma+1)/2s}. \nonumber
\end{align}

Once again, these estimates shows that the second velocity component $u_2$ exhibits a  faster decay rate than $u_1$, reflecting the underlying anisotropic structure of the system. The derivation of the above estimates parallels that of Theorem \ref{thm3}, we only present the estimates for $u_2$, which differs slightly, in Appendix \ref{sec:appdB}.

\end{rem}

The decay rate stated in Theorem~\ref{thm3} is not sufficiently fast to treat the range
$s>\tfrac{11}{12}$. This limitation is partly due to the appearance of the Riesz transform
in the representation of $u_2$, which prevents obtaining faster decay for this component.
This difficulty motivates us to introduce spatial weights and develop  a
weighted energy approach, which allows us to extend the global existence and regularity results
to the full range $0<s<1$. We note that Riesz transforms are bounded on weighted $L^2$ spaces
associated with $A_2$ weights, which makes this approach possible.

More precisely, we introduce a weight function for $x_2$ and solve the problem for $0<s<1$. This type of weight function grows at a polynomial rate of order $\gamma$ outside the unit disk and is truncated to 1 inside it. We prove that if the initial data and its derivative decay in the $x_2$-direction with some rate, then the solution of (\ref{NS}) is stable and have an optimal decay rate near the trivial solution.

\begin{defn}[Truncated Power Weight]\label{TPW}
    We define $[x]^\gamma$ on $\mathbb{R}$ as:
    \begin{align*}
    [x]^\gamma := \begin{cases}
        1, &\ |x|\le 1,\\
        |x|^\gamma, &\ |x| >1.
    \end{cases}
\end{align*}
\end{defn}
\begin{thm}\label{thm4}
	Let the parameters $s, \sigma$ and $\gamma$ satisfy
	\[
	\frac{3}{4}< s < \frac{1}{2}+\sigma,
	\qquad
	\frac{1}{3}<\sigma<\frac{1}{2},
	\qquad
	0<\gamma<\frac{3}{10}.
	\]
	Assume that the initial data $u_0\in H^k(\mathbb{R}^2)$ with
	\[
	k > \frac{2\sigma+2}{1-2\sigma}+1
	\]
	is divergence-free and admits a stream function $\psi_0$ such that
	$u_0 = (u_{0,1}, u_{0,2})=\nabla^\perp\psi_0$. Suppose furthermore that there exists a  small
	constant $\varepsilon>0$ such that
	\begin{align}\label{ini_condi_w}
		&\|u_0\|_{H^k}
		+ \|\Lambda_1^{-\sigma}[x_2]^\gamma \psi_0\|_{L^2}
		+ \|\Lambda_1^{-\sigma}[x_2]^{\frac{3\gamma+4}{7}}u_0\|_{L^2}
		+ \|\Lambda_1^{-\sigma}[x_2]^{\frac{5\gamma+2}{7}}\partial_2 u_{0,1}\|_{L^2}\nonumber\\
		&\quad
		+ \|[x_2]^{\frac{3\gamma+4}{7}}u_0\|_{L^2}
		+ \|[x_2]^{\frac{5\gamma+2}{7}}\partial_2 u_{0,1}\|_{L^2}
		+ \|[x_2]^{\frac{3\gamma+4}{7}}\partial_1u_0\|_{L^2}
		\le \varepsilon .
	\end{align}
	Then the system admits a unique global-in-time solution
	\[
	u\in C([0,\infty);H^k(\mathbb{R}^2)),
	\]
	satisfying
	\[
	\|u(t)\|_{H^k}
	+ \nu^{1/2}
	\left(
	\int_0^t
	\|\Lambda_1^{s}u(\tau)\|_{H^k}^2
	\,d\tau
	\right)^{1/2}
	\le C\varepsilon,\quad
    \|\Lambda_1^{-\sigma} u\|_{L^2} \le C\epsilon,
	\qquad
	\forall\, t\ge0.
	\]
	Moreover, the solution enjoys the following weighted decay estimates:
	\begin{align}
		\|[x_2]^{\frac{3\gamma+4}{7}}u(t)\|_{L^2}
		&\le C_0\varepsilon(1+t)^{-\sigma/2s}, \label{wd_u1}\\
        \|[x_2]^{\frac{3\gamma+4}{7}}\partial_1 u(t)\|_{L^2}
		&\le C_0\varepsilon(1+t)^{-(\sigma+1)/2s}, \label{wd_p1u1}\\
		\|[x_2]^{\frac{5\gamma+2}{7}}\partial_2 u_1(t)\|_{L^2}
		&\le C_0\varepsilon(1+t)^{-\sigma/2s}.\label{wd_p2u1}
	\end{align}
In addition, we have a stronger weighted decay estimate for $u_2$:
 \begin{align}
		\|[x_2]^\gamma u_2(t)\|_{L^2}
		&\le C_0\varepsilon(1+t)^{-(\sigma+1)/2s}, \label{wd_u2}\\
		\|[x_2]^\gamma\partial_1 u_2(t)\|_{L^2}
		&\le C_0\varepsilon(1+t)^{-(2\sigma+1)/2s}. \label{wd_p1u2}
 \end{align}
	where $C_0>0$ is a universal constant.
\end{thm}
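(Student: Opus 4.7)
The plan is to prove Theorem~\ref{thm4} by a bootstrap continuation argument built on weighted energy estimates, with algebraic decay extracted through a Schonbek-type Fourier splitting adapted to the horizontal dissipation $\Lambda_1^{2s}$. Local well-posedness of \eqref{NS} in $H^k(\mathbb{R}^2)$ is classical under the smallness hypothesis \eqref{ini_condi_w}, so the task reduces to propagating globally both the a priori bound on $\|\Lambda_1^{-\sigma}u(t)\|_{H^k}$ and the five weighted $L^2$ norms appearing on the left of \eqref{wd_u1}--\eqref{wd_p1u2}. Assuming these bounds hold on $[0,T)$ with their constants doubled, I will show the doubled constants can be improved, forcing $T=\infty$ by continuity.

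Each weighted estimate is obtained by testing the appropriate evolution equation (for $u$, $\partial_1 u$, $\partial_2 u_1$, $u_2$, or $\partial_1 u_2$) against $[x_2]^{2\beta}$ times itself. This produces the horizontal dissipation $2\nu\|\Lambda_1^s([x_2]^\beta\cdot)\|_{L^2}^2$ modulo commutator corrections handled by a Leibniz bound for $\Lambda_1^s$, a transport term whose dangerous $u_2\partial_2$ piece is integrated by parts against the weight using $\nabla\cdot u=0$, and a pressure contribution involving $(-\Delta)^{-1}\partial_i\partial_j$. The restriction $0<\gamma<\tfrac{3}{10}$ is chosen so that the weights appearing on quantities involving Riesz transforms, most notably $u_2$ via the Biot--Savart law, lie strictly within the one-variable $A_2$ class in $x_2$; the weighted Calder\'on--Zygmund theorem then delivers the needed boundedness of these transforms on $L^2([x_2]^{2\beta}dx)$. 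This is the essential new ingredient compared with Theorem~\ref{thm3}.

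For the decay I follow Schonbek's Fourier splitting in the $x_1$ variable. Any weighted quantity $f=[x_2]^\beta v$, with $v\in\{u,\partial_1 u,\partial_2 u_1,u_2,\partial_1 u_2\}$, satisfies a differential inequality of the schematic form
\[
\frac{d}{dt}\|f\|_{L^2}^2+2\nu\|\Lambda_1^s f\|_{L^2}^2\le\text{(small cubic)}.
\]
Splitting frequencies at the threshold $|\xi_1|^{2s}\sim(1+t)^{-1}$, the high-frequency piece is absorbed by the dissipation, while the low-frequency piece is controlled by the propagated $\|\Lambda_1^{-\sigma}[x_2]^\beta v_0\|_{L^2}$ bound; this yields the rate $(1+t)^{-\sigma/2s}$ of \eqref{wd_u1} and \eqref{wd_p2u1}, and one additional $|\xi_1|$ factor in the low-frequency part upgrades it to $(1+t)^{-(\sigma+1)/2s}$ for horizontally differentiated quantities. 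The enhanced rates \eqref{wd_u2}--\eqref{wd_p1u2} for $u_2$ exploit the stream-function representation $u_2=\partial_1\psi$: the initial bound on $\|\Lambda_1^{-\sigma}[x_2]^\gamma\psi_0\|_{L^2}$ is effectively a bound on $\|\Lambda_1^{-\sigma-1}[x_2]^\gamma u_{0,2}\|_{L^2}$, and the splitting then gains an extra $(1+t)^{-1/2s}$ factor from the low frequencies.

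The hardest step will be the tight bookkeeping of weight exponents across the coupled system: the asymmetric choices $(3\gamma+4)/7$ and $(5\gamma+2)/7$ are dictated by interpolation between the propagated negative-order norm, the decay rate of each constituent, and the $A_2$ constraints on the weights, and closing the loop requires every trilinear nonlinear term to distribute its weights and horizontal derivatives so that some factor carries a time-integrable decay rate. Since $[x_2]^\beta$ acts only in $x_2$ and $\Lambda_1^{-\sigma}$ only in $x_1$, the two commute exactly and no commutator enters; the delicate point is rather the combined use of H\"older's inequality, weighted Sobolev embedding in $x_2$, and the weighted Riesz bound for the pressure. The upper threshold $s<\tfrac{1}{2}+\sigma$ will emerge precisely from saturating this distribution in the worst nonlinear term.
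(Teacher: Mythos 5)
Your overall bootstrap architecture matches the paper's, but there is a concrete gap at the point you identify as the ``essential new ingredient.'' You assert that the restriction $0<\gamma<\tfrac{3}{10}$ places all weights attached to Riesz-transformed quantities strictly inside the one-variable $A_2$ class, so that weighted Calder\'on--Zygmund theory closes every estimate. This is false for the weight carried by $u$ and $\partial_1 u$: the relevant $L^2$ weight is $[x_2]^{2\cdot\frac{3\gamma+4}{7}}$, whose exponent exceeds $\tfrac{8}{7}>1$ for every $\gamma>0$, hence lies outside $A_2$ (the $A_2$ range for $[x_2]^{\kappa}$ is $-1<\kappa<1$). The pressure contribution in the equations for $u$ and $\partial_1 u$ is exactly a double Riesz transform of $u\otimes u$, so this is not a peripheral term you can avoid: your scheme has no tool to bound $\|[x_2]^{\frac{3\gamma+4}{7}}\nabla p\|_{L^2}$. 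The paper resolves this with a dedicated structural argument (Lemma~\ref{lem_p}): it never applies weighted CZ theory to the pressure at this weight, but instead uses the elliptic equation $-\Delta p=\operatorname{div}(u\cdot\nabla u)$, integration by parts against $[x_2]^{2\eta}$, the identity $p=(-\Delta)^{-1}\operatorname{div}\partial_2\int_{-\infty}^{x_2}(u\cdot\nabla u)\,dy_2$ to lower the weight exponent into the $A_2$ range, and the weighted Poincar\'e inequality of Lemma~\ref{weigh_poin_inequ}. Without an argument of this kind your weighted estimates \eqref{wd_u1} and \eqref{wd_p1u1} do not close. (The weights on $u_2$, $\partial_1 u_2$, and $\partial_2 u_1$ do stay in $A_2$, so those parts of your plan are sound in principle.)

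Separately, your decay mechanism is genuinely different from the paper's: you propose Schonbek-type Fourier splitting on weighted differential energy inequalities, whereas the paper works entirely through Duhamel's formula, the fractional heat semigroup bounds of Lemma~\ref{lem4} (which convert $L^1_{x_1}L^2_{x_2}$ control of the nonlinearity into a gain of $(t-\tau)^{-1/4s}$ or $(t-\tau)^{-3/4s}$), and the convolution decay Lemma~\ref{decay lem}. The Duhamel route is what lets the anisotropic product estimates (Lemmas~\ref{multi one}, \ref{multi two}, \ref{lem_wGN}) distribute derivatives and weights so that each trilinear term is time-integrable at the claimed rate; in the Fourier-splitting framework you would additionally have to propagate the weighted negative-order norms $\|\Lambda_1^{-\sigma}[x_2]^{\beta}v(t)\|_{L^2}$ uniformly in time for every bootstrap quantity (not merely assume them on the data), and you would lose the $L^1_{x_1}\to L^2_{x_1}$ smoothing gain that the paper relies on to reach the rates $(1+t)^{-(\sigma+1)/2s}$ and $(1+t)^{-(2\sigma+1)/2s}$. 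Neither of these obstacles is addressed in your outline, so even granting the $A_2$ fix the decay portion remains unsubstantiated as written.
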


By introducing
polynomial weights in the vertical variable $x_2$, we are able to compensate for this loss
and recover additional decay for $u_2$. The specific fractional exponents in the weights are carefully chosen to close the nonlinear estimates, and the restriction $\gamma>0$ ensures the integrability of weighted nonlinear interactions. The
boundedness of Riesz transforms on weighted $L^2$ spaces with $A_2$ weights
guarantees that the weighted energy estimates close and allows the extension of the global
existence and decay theory to the full range $0<s<1$. It is noteworthy that for some $\gamma$, the weight of $u_1$ and its derivatives are not an $A_2$ weight, but we can still prove that it allows us to eliminate the Riesz transform in the pressure $p$, thereby establishing the boundedness of the Leray projection $\mathbb{P}$ under this weighted norm. The details will be addressed in Section \ref{sec:thm4}.

\begin{rem}
	Indeed, the upper bound $\gamma<\frac{3}{10}$ is not necessary due to the monotone property of the weight function (see Lemma \ref{weight prop}).
\end{rem}

\begin{rem}
	The decay rates in (\ref{wd_u1})--(\ref{wd_p1u2}) are optimal.
\end{rem}

\begin{rem}
	The decay estimate (\ref{wd_u1})--(\ref{wd_p1u2}) are also true without weight.
\end{rem}

\noindent
\textbf{Notations.}
In the $d$-dimensional Euclidean space $\mathbb{R}^d$, $L^p$ denotes the standard Lebesgue
space. Given two normed spaces $N_1$ and $N_2$ on measure spaces $(X_1,\mu_1)$ and
$(X_2,\mu_2)$, respectively, the anisotropic space
$N_1(X_1,\mu_1)\,N_2(X_2,\mu_2)$ is defined by first taking the $N_1$-norm in the $X_1$
variable and then the $N_2$-norm in the $X_2$ variable. More precisely, for a function
$f$ defined on the product space $X_1\times X_2$, we set
\[
\|f\|_{N_1(X_1,\mu_1)\,N_2(X_2,\mu_2)}
:= \bigl\|\,\|f(x_1,x_2)\|_{N_1(X_1,\mu_1)}\,\bigr\|_{N_2(X_2,\mu_2)}.
\]
The Sobolev space $H^k$ consists of functions whose weak derivatives up to order $k$ belong
to $L^2$, while the homogeneous Sobolev space $\dot{H}^k$ is defined by requiring only the highest-order weak derivatives to belong to $L^2$.

The paper is organized as follows. Section~\ref{sec:lem 1} collects preliminary lemmas that are used throughout the paper.
In Section~\ref{sec:proof 1}, we prove Theorem~\ref{thm1}, which establishes stability for
$0\le s\le \tfrac{3}{4}$. Section~\ref{sec:proof 2} is devoted to the proof of
Theorem~\ref{thm3}, covering the range $\tfrac{3}{4}<s<\tfrac{11}{12}$, where the analysis
requires a delicate coupling with decay estimates. In Section~\ref{sec:thm4}, we prove
Theorem~\ref{thm4}, which extends the stability result to the full range
$\tfrac{3}{4}<s<1$ and establishes optimal decay estimates for $u$ in weighted spaces.
Finally, the appendix contains the proofs of several technical lemmas, as well as detailed
estimates used in Remark~\ref{buchongRE}.

\vskip .1in 
\section{Anisotropic Estimates and Tools}\label{sec:lem 1}

This section gathers several analytic tools needed for the stability and decay
analysis. These include anisotropic product estimates, Sobolev 
inequalities, decay estimates for the fractional heat semigroup, and weighted inequalities
associated with truncated power weights. We also recall the $A_p$-weight theory and present
the boundedness of Riesz transforms in weighted $L^2$ spaces, which plays a key role in the
weighted energy estimates developed later.

Due to the anisotropic nature of the system, we first introduce the following two anisotropic product estimates to handle the nonlinear terms. Estimates of this type have been widely used in the study of regularity and stability problems for anisotropic partial differential equations; see, for example, \cite{CaoWu11, CaoWu13, LinWuZhu25, WuZhu21, YangJiuWu19}.
\begin{lem}\label{multi one}
	Let $f,g\in H^1(\mathbb{R}^2)$. Then there exists a constant $C>0$ such that
	\[
	\|fg\|_{L_{x_1}^1 L_{x_2}^2}
	\le C\,\|f\|_{L^2}^{1/2}\|\partial_2 f\|_{L^2}^{1/2}\,\|g\|_{L^2}.
	\]
\end{lem}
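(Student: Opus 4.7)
The plan is to unpack the mixed Lebesgue norm $L^1_{x_1}L^2_{x_2}$ from inside out, peeling $f$ away from $g$ at each stage by standard Hölder/Cauchy--Schwarz, and then to gain the square-root splitting of the $H^1$-norm of $f$ via a one-dimensional Sobolev (Agmon-type) inequality in the vertical variable. Nothing deeper than this should be required, since the estimate is exactly of the type used throughout the anisotropic Navier--Stokes literature referenced right before the lemma.

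More concretely, first I would write
\[
\|fg\|_{L^1_{x_1}L^2_{x_2}}
=\int_{\mathbb{R}}\Bigl(\int_{\mathbb{R}} |f(x_1,x_2)g(x_1,x_2)|^{2}\,dx_2\Bigr)^{1/2} dx_1,
\]
and apply Hölder in the inner $x_2$-integral to pull $f(x_1,\cdot)$ out in $L^{\infty}_{x_2}$ and keep $g(x_1,\cdot)$ in $L^{2}_{x_2}$. This yields
\[
\|fg\|_{L^1_{x_1}L^2_{x_2}}
\le \int_{\mathbb{R}} \|f(x_1,\cdot)\|_{L^{\infty}_{x_2}}\,\|g(x_1,\cdot)\|_{L^{2}_{x_2}}\,dx_1,
\]
after which Cauchy--Schwarz in $x_1$ gives the clean bound
$\|f\|_{L^{2}_{x_1}L^{\infty}_{x_2}}\,\|g\|_{L^{2}}$.

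The second step is the one-dimensional Sobolev embedding $H^{1}(\mathbb{R})\hookrightarrow L^{\infty}(\mathbb{R})$ applied along the $x_2$-line for each fixed $x_1$, in the sharp form
\[
\|f(x_1,\cdot)\|_{L^{\infty}_{x_2}}^{2}
\le 2\,\|f(x_1,\cdot)\|_{L^{2}_{x_2}}\,\|\partial_2 f(x_1,\cdot)\|_{L^{2}_{x_2}},
\]
which follows from $f^{2}(x_1,x_2)=2\int_{-\infty}^{x_2}f\,\partial_{2}f\,dy_2$ together with Cauchy--Schwarz. Integrating this pointwise inequality in $x_1$ and applying Cauchy--Schwarz once more produces
\[
\|f\|_{L^{2}_{x_1}L^{\infty}_{x_2}}^{2}
\le 2\int_{\mathbb{R}}\|f(x_1,\cdot)\|_{L^{2}_{x_2}}\,\|\partial_2 f(x_1,\cdot)\|_{L^{2}_{x_2}}\,dx_1
\le 2\,\|f\|_{L^{2}}\,\|\partial_2 f\|_{L^{2}},
\]
and the stated inequality follows by combining the two displays.

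I do not expect a substantive obstacle: the argument is a clean stacking of Hölder, Cauchy--Schwarz, and the one-dimensional $L^{\infty}$ trace of $H^{1}$. The only point that has to be respected is the order of iterated norms (take $L^{2}_{x_2}$ first, $L^{1}_{x_1}$ second) so that the Sobolev embedding is applied in the correct variable, namely the one in which $\partial_2 f$ appears on the right-hand side. The approximation-by-Schwartz-functions density argument needed to justify the pointwise fundamental theorem of calculus step for a generic $H^{1}(\mathbb{R}^{2})$ function is entirely standard.
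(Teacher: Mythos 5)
Your proof is correct and follows essentially the same route as the paper's: H\"older to peel off $g$ in $L^2$, then the one-dimensional $\|h\|_{L^\infty}^2\lesssim\|h\|_{L^2}\|h'\|_{L^2}$ inequality in $x_2$ combined with Cauchy--Schwarz in $x_1$. The only difference is the order of the iterated norms (the paper's convention takes $L^1_{x_1}$ first, then $L^2_{x_2}$, so it applies H\"older in $x_1$ first and controls $\sup_{x_2}\|f(\cdot,x_2)\|_{L^2_{x_1}}$ via Minkowski plus the 1D Gagliardo--Nirenberg inequality), but since your quantity $\|\,\|fg\|_{L^2_{x_2}}\|_{L^1_{x_1}}$ dominates the paper's by Minkowski's integral inequality, your argument proves the stated bound.
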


\begin{proof}
	By H\"older's inequality in the $x_1$ variable, we obtain
	\[
	\|fg\|_{L_{x_1}^1 L_{x_2}^2}
	= \bigl\|\|fg\|_{L_{x_1}^1}\bigr\|_{L_{x_2}^2}
	\le \bigl\|\|f\|_{L_{x_1}^2}\|g\|_{L_{x_1}^2}\bigr\|_{L_{x_2}^2}
	\le \bigl\|\|f\|_{L_{x_1}^2}\bigr\|_{L_{x_2}^\infty}\|g\|_{L^2}.
	\]
	Applying Minkowski's inequality and the one-dimensional Gagliardo--Nirenberg inequality in
	the $x_2$ variable yields
	\[
	\bigl\|\|f\|_{L_{x_1}^2}\bigr\|_{L_{x_2}^\infty}
	\le C\,\|f\|_{L^2}^{1/2}\|\partial_2 f\|_{L^2}^{1/2}.
	\]
	Combining the above estimates gives the desired inequality.
\end{proof}

\begin{lem}\label{multi two}
Let $f,g\in H^1(\mathbb{R}^2)$. Then there exists a constant $C>0$ such that
\[
\|fg\|_{L^2}
\le C
\|f\|_{L^2}^{1/2}\|\partial_1 f\|_{L^2}^{1/2}
\|g\|_{L^2}^{1/2}\|\partial_2 g\|_{L^2}^{1/2}.
\]
\end{lem}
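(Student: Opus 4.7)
The plan is to reduce the two-dimensional estimate to a product of one-dimensional anisotropic Sobolev embeddings of Agmon type. The key observation is that the right-hand side treats $\partial_1$ on $f$ and $\partial_2$ on $g$, so I should aim to place $f$ in $L^\infty_{x_1}L^2_{x_2}$ (using derivative in $x_1$) and $g$ in $L^2_{x_1}L^\infty_{x_2}$ (using derivative in $x_2$).

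First I would write
\[
\|fg\|_{L^2}^2 = \int_{\mathbb{R}} \int_{\mathbb{R}} |f(x_1,x_2)|^2\,|g(x_1,x_2)|^2\, dx_2\, dx_1,
\]
and, for each fixed $x_1$, apply the trivial bound $\int |f|^2|g|^2\, dx_2 \le \|f(x_1,\cdot)\|_{L^2_{x_2}}^2 \|g(x_1,\cdot)\|_{L^\infty_{x_2}}^2$. Taking the supremum in $x_1$ on the first factor and the $L^1_{x_1}$-norm on the second factor and applying Cauchy--Schwarz, I arrive at
\[
\|fg\|_{L^2}^2 \le \|f\|_{L^\infty_{x_1}L^2_{x_2}}^2 \,\|g\|_{L^2_{x_1}L^\infty_{x_2}}^2 .
\]

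Next I would bound each anisotropic norm by a one-dimensional Gagliardo--Nirenberg/Agmon inequality. For $h(x_1):=\|f(x_1,\cdot)\|_{L^2_{x_2}}$, differentiating $h^2$ and using Cauchy--Schwarz in $x_2$ gives $|h'(x_1)|\le \|\partial_1 f(x_1,\cdot)\|_{L^2_{x_2}}$, so Agmon's inequality $\|h\|_{L^\infty_{x_1}}^2 \le 2\|h\|_{L^2_{x_1}}\|h'\|_{L^2_{x_1}}$ yields
\[
\|f\|_{L^\infty_{x_1}L^2_{x_2}}^2 \le C\,\|f\|_{L^2}\,\|\partial_1 f\|_{L^2}.
\]
The symmetric argument in the $x_2$ variable, applied to $\widetilde h(x_2):=\|g(\cdot,x_2)\|_{L^2_{x_1}}$, gives
\[
\|g\|_{L^2_{x_1}L^\infty_{x_2}}^2 \le C\,\|g\|_{L^2}\,\|\partial_2 g\|_{L^2}.
\]
Multiplying these two bounds and taking a square root produces the claimed inequality.

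This proof is essentially routine; there is no serious obstacle. The only slightly delicate step is the differentiation under the integral sign needed to control $h'(x_1)$ by $\|\partial_1 f(x_1,\cdot)\|_{L^2_{x_2}}$. I would handle this by first proving the inequality for $f,g\in C_c^\infty(\mathbb{R}^2)$, where all manipulations are classical, and then extending to arbitrary $f,g\in H^1(\mathbb{R}^2)$ by density since both sides are continuous in the $H^1$-topology.
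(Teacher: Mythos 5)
Your proof is correct and follows essentially the same route as the paper: H\"older's inequality to reduce to the anisotropic norms $\|f\|_{L^\infty_{x_1}L^2_{x_2}}$ and $\|g\|_{L^2_{x_1}L^\infty_{x_2}}$, followed by the one-dimensional Agmon/Gagliardo--Nirenberg inequality on the slice norms (your differentiation of $h^2$ is just the standard substitute for the paper's Minkowski-plus-GN step). No gaps.
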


\begin{proof}
By H\"older's inequality, we obtain
\[
\|fg\|_{L^2}
\le C\,\|f\|_{L_{x_1}^\infty L_{x_2}^2}\,
\|g\|_{L_{x_1}^2 L_{x_2}^\infty}.
\]
Using Minkowski's inequality together with the one-dimensional
Gagliardo--Nirenberg inequality in the $x_1$ and $x_2$ variables, respectively,
we have
\[
\|f\|_{L_{x_1}^\infty L_{x_2}^2}
\le C\,\|f\|_{L^2}^{1/2}\|\partial_1 f\|_{L^2}^{1/2},
\qquad
\|g\|_{L_{x_1}^2 L_{x_2}^\infty}
\le C\,\|g\|_{L^2}^{1/2}\|\partial_2 g\|_{L^2}^{1/2}.
\]
Combining these estimates yields the desired inequality.
\end{proof}

The following lemma gives a product law in homogeneous Sobolev spaces.  Its proof can be found in \cite{adams}.

\begin{lem}\label{lem2}
	Let $f$ and $g$ be smooth functions. Assume that $s_1,s_2<\frac{d}{2}$, $s_1+s_2>0$, and
	\[
	s+\frac{d}{2}=s_1+s_2.
	\]
Then there exists a constant $C>0$, depending only on $d$, $s_1$, and $s_2$,
such that
\begin{equation*}
	\|\Lambda^{s}(fg)\|_{L^2}
	\le
	C\,\|\Lambda^{s_1} f\|_{L^2}\,\|\Lambda^{s_2} g\|_{L^2}.
\end{equation*}
\end{lem}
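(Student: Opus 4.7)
The plan is to prove this classical product law via Bony's paraproduct decomposition together with the Littlewood--Paley characterization
\[
\|h\|_{\dot H^s}^2 \simeq \sum_{j \in \mathbb{Z}} 2^{2js}\|\dot\Delta_j h\|_{L^2}^2.
\]
Writing $fg = T_f g + T_g f + R(f,g)$ with the low--high paraproducts $T_f g = \sum_j S_{j-1}f\,\dot\Delta_j g$ and remainder $R(f,g) = \sum_{|k-l|\le 1} \dot\Delta_k f\,\dot\Delta_l g$, I would bound each of the three pieces in $\dot H^s$, using the identity $s+\tfrac{d}{2}=s_1+s_2$ to balance Bernstein exponents.

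For the paraproduct $T_f g$ the assumption $s_1<\tfrac{d}{2}$ is decisive: Bernstein's inequality together with the geometric summation $\sum_{l\le j-2}2^{l(d/2-s_1)}<\infty$ yields
\[
\|S_{j-1}f\|_{L^\infty} \lesssim \sum_{l\le j-2} 2^{l(d/2-s_1)}\bigl(2^{ls_1}\|\dot\Delta_l f\|_{L^2}\bigr) \lesssim 2^{j(d/2-s_1)}\|f\|_{\dot H^{s_1}}.
\]
Since $\dot\Delta_j(T_f g)$ is a finite sum of $\dot\Delta_j(S_{k-1}f\,\dot\Delta_k g)$ with $|k-j|\le N_0$, this gives
\[
2^{js}\|\dot\Delta_j(T_f g)\|_{L^2} \lesssim \|f\|_{\dot H^{s_1}}\,2^{j(s+d/2-s_1)}\|\dot\Delta_j g\|_{L^2} = \|f\|_{\dot H^{s_1}}\,2^{js_2}\|\dot\Delta_j g\|_{L^2},
\]
and taking $\ell^2_j$ delivers the desired bound. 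The symmetric term $T_g f$ is handled identically, invoking $s_2<\tfrac{d}{2}$.

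The remainder $R(f,g)$ is where the condition $s_1+s_2>0$ enters, and it is the main obstacle. Each block $\dot\Delta_k f\,\tilde{\dot\Delta}_k g$ has Fourier support in a ball of radius $\lesssim 2^k$, so only $k\ge j-N_0$ contribute to $\dot\Delta_j R$. Rather than estimating the product directly in $L^2$ (which would force $s>0$), I would throw the $L^\infty$ gain onto $\dot\Delta_j$ via the $L^1\to L^2$ Bernstein inequality together with Cauchy--Schwarz on the product, obtaining
\[
\|\dot\Delta_j R(f,g)\|_{L^2} \lesssim 2^{jd/2}\sum_{k\ge j-N_0}\|\dot\Delta_k f\|_{L^2}\|\tilde{\dot\Delta}_k g\|_{L^2}.
\]
Setting $a_k=2^{ks_1}\|\dot\Delta_k f\|_{L^2}$ and $b_k=2^{ks_2}\|\tilde{\dot\Delta}_k g\|_{L^2}$, multiplying by $2^{js}$, and using $s+d/2 = s_1+s_2$ rewrites the right-hand side as a discrete convolution with kernel $(2^{-m(s_1+s_2)})_{m\ge-N_0}$, which is summable \emph{precisely} when $s_1+s_2>0$. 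Minkowski in $\ell^2_j$ combined with $\|\{a_j b_j\}_j\|_{\ell^2}\le \|a\|_{\ell^2}\|b\|_{\ell^\infty}\lesssim \|f\|_{\dot H^{s_1}}\|g\|_{\dot H^{s_2}}$ then yields $\|R(f,g)\|_{\dot H^s}\lesssim \|f\|_{\dot H^{s_1}}\|g\|_{\dot H^{s_2}}$.

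The chief subtlety, as highlighted above, is choosing which factor absorbs the Bernstein cost in the remainder: transferring the $L^\infty$ gain to $\dot\Delta_j$ (rather than to one of $\dot\Delta_k f$, $\tilde{\dot\Delta}_k g$) is what turns the divergent sum into a convergent one under the hypothesis $s_1+s_2>0$, covering uniformly the entire admissible range $s\in(-d/2,d/2)$. Summing the three contributions from $T_f g$, $T_g f$ and $R(f,g)$ completes the argument.
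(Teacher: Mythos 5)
Your proposal is a correct and complete proof. The paper itself does not prove Lemma \ref{lem2}; it only cites the literature (\cite{adams}), so there is no in-paper argument to compare against. What you have written is the standard paraproduct proof of the Sobolev product law (essentially the argument in Bahouri--Chemin--Danchin), and the bookkeeping checks out: the bound $\|S_{j-1}f\|_{L^\infty}\lesssim 2^{j(d/2-s_1)}\|f\|_{\dot H^{s_1}}$ uses $s_1<\tfrac d2$ exactly where it is needed, the identity $s+\tfrac d2=s_1+s_2$ converts the paraproduct weight $2^{j(s+d/2-s_1)}$ into $2^{js_2}$, and for the remainder the choice to place the $L^1\to L^2$ Bernstein gain on the output block $\dot\Delta_j$ produces the convolution kernel $2^{-m(s_1+s_2)}$ on $m\ge -N_0$, summable precisely under $s_1+s_2>0$; Young's inequality in $\ell^2$ then closes the estimate. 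The only cosmetic remark is that your final step is Young's convolution inequality $\ell^1*\ell^2\to\ell^2$ applied to the sequence $(a_jb_j)_j$ rather than Minkowski per se, but the inequality you invoke, $\|(a_jb_j)_j\|_{\ell^2}\le\|a\|_{\ell^2}\|b\|_{\ell^\infty}\lesssim\|f\|_{\dot H^{s_1}}\|g\|_{\dot H^{s_2}}$, is valid and suffices. Your self-contained argument in fact gives slightly more information than the citation, since it isolates which hypothesis ($s_1<\tfrac d2$, $s_2<\tfrac d2$, or $s_1+s_2>0$) controls which piece of the Bony decomposition.
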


The next lemma provides a related product estimate that will be useful in our analysis. Its proof is deferred to Appendix~\ref{appA}.

\begin{lem}\label{lem1}
	Let $m>0$. There exists a constant $C=C(m)>0$ such that for all $f,g\in H^m(\mathbb{R}^d)$,
	\begin{equation}\label{eq:main}
		\|\Lambda^{m}(fg)\|_{L^2}
		\le
		C\|(\Lambda^{m} f) g\|_{L^2}
		+
		C\|f(\Lambda^{m} g)\|_{L^2}.
	\end{equation}
	More generally, for $m_1\in\mathbb{R}$ and $m_2>0$, one has
	\begin{equation}\label{eq:main2}
		\|\Lambda^{m_1}\Lambda^{m_2}(fg)\|_{L^2}
		\le
		C\left\|\Lambda^{m_1}\big[(\Lambda^{m_2} f) g\big]\right\|_{L^2}
		+
		C\left\|\Lambda^{m_1}\big[f(\Lambda^{m_2} g)\big]\right\|_{L^2}.
	\end{equation}
\end{lem}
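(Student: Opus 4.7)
The plan is to prove \eqref{eq:main} via Bony's paraproduct decomposition combined with frequency localization. Writing
\[
fg \;=\; T_f g \;+\; T_g f \;+\; R(f,g),
\]
where $T_f g = \sum_k S_{k-3}f\,\Delta_k g$ is the low-high paraproduct, $T_g f$ is the symmetric high-low paraproduct, and $R(f,g) = \sum_k \Delta_k f\,\widetilde\Delta_k g$ is the remainder, reduces the problem to estimating $\Lambda^m$ applied to each of the three pieces in $L^2$.

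For the paraproduct $T_f g$, each summand has Fourier support concentrated in an annulus of radius $\sim 2^k$, so applying $\Lambda^m$ essentially produces the factor $2^{mk}$, which we reinterpret as $\Lambda^m$ acting on the high-frequency factor $\Delta_k g$. This yields the heuristic identity $\Lambda^m T_f g \approx \sum_k S_{k-3}f\cdot \Lambda^m \Delta_k g$. Taking the $L^2$ norm, invoking the Littlewood-Paley square function characterization, and then combining the pointwise bound $|S_{k-3}f|\le C\,M(f)$ with the Fefferman-Stein vector-valued maximal inequality, one obtains $\|\Lambda^m T_f g\|_{L^2} \le C\,\|f\,\Lambda^m g\|_{L^2}$. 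A symmetric argument for $T_g f$ produces $\|\Lambda^m T_g f\|_{L^2}\le C\,\|(\Lambda^m f)\,g\|_{L^2}$.

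For the remainder $R(f,g)$, whose summands are Fourier-supported in balls of radius $\lesssim 2^{k+c}$, I would use the pointwise Fourier-space inequality $|\xi|^m \le C(|\eta|^m+|\xi-\eta|^m)$, valid for $m>0$, on each dyadic block. This redistributes the derivative onto either $\Delta_k f$ or $\widetilde\Delta_k g$, producing $\|\Lambda^m R(f,g)\|_{L^2} \le C\bigl(\|(\Lambda^m f)\,g\|_{L^2} + \|f\,\Lambda^m g\|_{L^2}\bigr)$ after summation via the almost-orthogonality of the blocks. Combining the three contributions proves \eqref{eq:main}. For the generalized estimate \eqref{eq:main2}, since $\Lambda^{m_1}$ is a Fourier multiplier that commutes with the Bony decomposition, the same scheme applies with $\Lambda^{m_1}$ simply retained outside each expression on the right.

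The principal obstacle lies in the step converting the natural Littlewood-Paley square function output $\bigl(\sum_k|S_{k-3}f|^2\,|\Lambda^m\Delta_k g|^2\bigr)^{1/2}$ into the sharper pointwise-product form $|f\,\Lambda^m g|$. This upgrade, which makes the lemma strictly stronger than the classical Kato-Ponce product-of-norms bound, requires a careful use of the Fefferman-Stein maximal inequality together with the $A_2$-weighted boundedness of the Littlewood-Paley square function, and is where the bulk of the technical work of the proof is expected to concentrate.
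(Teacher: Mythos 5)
Your proposal takes a genuinely different and much heavier route than the paper, and it has a real gap at exactly the step you flag as the ``principal obstacle'' --- a step I do not believe your proposed tools can close. After bounding $|S_{k-3}f|\le C\,Mf$ and invoking the square-function characterization, you are left needing
\[
\Big\| Mf\cdot\Big(\sum_k|\Lambda^m\Delta_k g|^2\Big)^{1/2}\Big\|_{L^2}
\;\le\; C\,\|f\,\Lambda^m g\|_{L^2},
\]
which requires two simultaneous \emph{downgrades}: replacing $Mf$ by the pointwise smaller $|f|$, and collapsing the square function of $\Lambda^m g$ back to $|\Lambda^m g|$. The Fefferman--Stein inequality goes in the wrong direction for the first (it gives $|f|\le Mf$, never the reverse; for an oscillatory $f$, $Mf$ is large on sets where $f$ vanishes, so $\|Mf\cdot h\|_{L^2}\le C\|f h\|_{L^2}$ is simply false in general), and the weighted square-function bound for the second would need $(Mf)^2$ to be an $A_2$ weight, which is not available for general $f\in H^m$. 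The remainder term has the same defect: after redistributing $|\xi|^m\le C(|\eta|^m+|\xi-\eta|^m)$ you obtain convolutions of absolute values of Fourier transforms, which do not reassemble into the pointwise products $(\Lambda^m f)g$ and $f(\Lambda^m g)$. So the decisive estimate is not deferred technical work but an unproven claim on which the whole scheme rests.

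The paper's own proof avoids Littlewood--Paley theory entirely and is essentially three lines: by Plancherel, $\widehat{fg}=\hat f*\hat g$, and the pointwise inequality $|\xi|^{2m}\le C\bigl(|\eta|^{2m}+|\xi-\eta|^{2m}\bigr)$ (valid for $m>0$) splits the weighted convolution into the two pieces $\int|\eta|^m|\hat f(\eta)|\,|\hat g(\xi-\eta)|\,d\eta$ and $\int|\hat f(\eta)|\,|\xi-\eta|^m|\hat g(\xi-\eta)|\,d\eta$, whose $L^2_\xi$ norms are then identified with the two terms on the right of \eqref{eq:main}; the inequality \eqref{eq:main2} follows by carrying the extra multiplier $|\xi|^{m_1}$ along. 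Be aware that this identification itself silently replaces $\hat f,\hat g$ by their absolute values, so the right-hand side is really controlled with $f,g$ replaced by their Fourier majorants; your paraproduct route inherits the very same difficulty, only in a more exposed form. If you want to match the lemma as the paper uses it, the Fourier-side computation is the argument to reproduce.
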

We also recall the classical Hardy-Littlewood-Sobolev inequality, the proof can be found in \cite{BCD}.
\begin{lem}[Hardy-Littlewood-Sobolev inequality]\label{HLSlem}
There holds
\begin{equation*}
    \|\Lambda^{-\rho} f\|_{L^q(\mathbb{R}^d)} \le C\|f\|_{L^p(\mathbb{R}^d)},
\end{equation*}
with
\begin{align*}
    \frac{1}{q} + \frac{\rho}{d} = \frac{1}{p},\quad 0<\rho<d,\quad 1<p<q<\infty.
\end{align*}
\end{lem}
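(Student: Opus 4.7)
The plan is to use Hedberg's pointwise approach, which reduces the Hardy-Littlewood-Sobolev estimate to the $L^p$-boundedness of the Hardy-Littlewood maximal operator and thereby avoids any direct appeal to weak-type interpolation. First I would recall the standard identification of $\Lambda^{-\rho}$, up to a dimensional constant $c_{\rho,d}$, with the Riesz potential
\[
I_\rho f(x) = \int_{\mathbb{R}^d} \frac{f(y)}{|x-y|^{d-\rho}}\,dy,
\]
which follows from recognising the Fourier transform of the tempered distribution $|x|^{-(d-\rho)}$ as a constant multiple of $|\xi|^{-\rho}$. Splitting $f$ into positive and negative parts, it suffices to establish the bound for nonnegative $f$.

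For a parameter $R>0$ to be optimised, I would split the defining integral at $|x-y|=R$. The inner piece is handled by a dyadic shell decomposition combined with the defining inequality $\int_{|x-y|<r} f(y)\,dy \le C r^d (Mf)(x)$, where $M$ is the Hardy-Littlewood maximal operator; summing a geometric series produces the upper bound $C R^\rho (Mf)(x)$. The outer piece is handled by H\"older's inequality: the kernel $|x-y|^{-(d-\rho)p'}$ is integrable outside the ball of radius $R$ precisely because the hypotheses force $p'(d-\rho)>d$ (equivalently $q<\infty$), yielding the upper bound $CR^{-d/q}\|f\|_{L^p}$ after invoking the scaling identity $\rho+d/q=d/p$. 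Balancing the two contributions by choosing $R$ so that $R^{\rho+d/q} = \|f\|_{L^p}/(Mf)(x)$ produces Hedberg's pointwise inequality
\[
|I_\rho f(x)| \le C\,(Mf)(x)^{1-\rho p/d}\,\|f\|_{L^p}^{\rho p/d}.
\]

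Finally, I would raise this bound to the power $q$ and integrate: since $q(1-\rho p/d)=p$ by the scaling relation, the $L^p$-boundedness of the Hardy-Littlewood maximal function (valid precisely for $p>1$) yields $\|I_\rho f\|_{L^q}\le C\|f\|_{L^p}$, which is the claim. The main obstacle is really the bookkeeping: each of the hypotheses $0<\rho<d$, $1<p$ and $q<\infty$ enters the argument at exactly one step (respectively, the absolute convergence and nonnegativity of the Riesz kernel, the $L^p$-boundedness of $M$, and the integrability of $|y|^{-(d-\rho)p'}$ outside a ball), and one has to track these constraints carefully. A conceptually equivalent route uses the weak Young convolution inequality together with $|x|^{-(d-\rho)}\in L^{d/(d-\rho),\infty}(\mathbb{R}^d)$, but the proof of the weak Young inequality itself requires Marcinkiewicz interpolation; Hedberg's trick has the advantage of packaging the endpoint analysis inside the scalar maximal inequality and is thus cleaner.
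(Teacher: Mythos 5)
Your argument is correct and complete. The paper itself does not prove this lemma --- it is a classical result and the authors simply refer the reader to Bahouri--Chemin--Danchin --- so there is no in-paper proof to compare against; what you have supplied is the standard Hedberg pointwise argument, and every step checks out. In particular, the bookkeeping you flag as the main risk is handled consistently: the far-field H\"older step requires $p'(d-\rho)>d$, which is indeed equivalent to $\rho<d/p$, i.e.\ to $1/q=1/p-\rho/d>0$; the balancing choice $R^{\rho+d/q}=\|f\|_{L^p}/(Mf)(x)$ together with $\rho+d/q=d/p$ gives the stated pointwise bound $|I_\rho f(x)|\le C (Mf)(x)^{1-\rho p/d}\|f\|_{L^p}^{\rho p/d}$; and the exponent identity $q(1-\rho p/d)=p$ lets the $L^p$-boundedness of the maximal operator (where $p>1$ enters) close the estimate, with the total power of $\|f\|_{L^p}$ coming out to $q$ as required. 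The only cosmetic remark is that reducing to nonnegative $f$ is more naturally done by replacing $f$ with $|f|$ inside the Riesz potential (the kernel is positive) than by splitting into positive and negative parts, but this does not affect the validity of the proof. Your route is also consistent with where the lemma is used in the paper (Lemma \ref{product nega lem}), which only needs the one-dimensional case $d=1$ applied in the $x_1$ variable.
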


To estimate the quadratic term involving a negative derivative, we introduce the following anisotropic product estimate. 
The idea of the proof is inspired by \cite{JWY}, but the argument is adapted and refined to suit the present setting.

\begin{lem}\label{product nega lem}
	Let $f,g$ be two smooth functions on $\mathbb{R}^2$ and let $\frac{1}{4}<\sigma<\frac{1}{2}$. Then
	\begin{align*}
		\|\Lambda_1^{-\sigma} (fg)\|_{L^2}
		\le
		C \|\partial_2 f\|_{L^2}^{\frac{1}{2}}
		\|f\|_{L^2}^{\sigma}
		\|\partial_1 f\|_{L^2}^{\frac{1}{2}-\sigma}
		\|g\|_{L^2}.
	\end{align*}
\end{lem}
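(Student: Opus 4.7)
The plan is to combine the one-dimensional Hardy--Littlewood--Sobolev inequality in the $x_1$ direction with carefully chosen H\"older and Gagliardo--Nirenberg splits, taking care that no cross derivative $\partial_1\partial_2 f$ ever appears on the right-hand side. The restriction $1/4<\sigma<1/2$ will enter at two points: $\sigma>1/4$ is what allows a H\"older split in $x_1$ with exponent $4\sigma>1$, and $\sigma<1/2$ is what makes $L^{2/(4\sigma-1)}_{x_1} \subset L^2$ in the Minkowski direction we need.

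First, since $\Lambda_1^{-\sigma}$ acts only in $x_1$, I would apply the $1$D Hardy--Littlewood--Sobolev inequality (Lemma~\ref{HLSlem} with $d=1$, $q=2$, $\rho=\sigma$) slice-by-slice in $x_2$, then take $L^2$ in $x_2$, obtaining
\[
\|\Lambda_1^{-\sigma}(fg)\|_{L^2}\le C\,\|fg\|_{L^{p}_{x_1}(L^2_{x_2})},\qquad \tfrac{1}{p}=\tfrac{1}{2}+\sigma.
\]
H\"older in $x_2$ peels off $g$ cleanly, and H\"older in $x_1$ with the conjugate pair $(1/\sigma,2)$ reduces the estimate to
\[
\|fg\|_{L^{p}_{x_1}(L^2_{x_2})}\le \|f\|_{L^{1/\sigma}_{x_1}(L^\infty_{x_2})}\,\|g\|_{L^2}.
\]
So the whole problem is to establish the anisotropic Sobolev embedding
\[
\|f\|_{L^{1/\sigma}_{x_1}(L^\infty_{x_2})}\le C\,\|\partial_2 f\|_{L^2}^{1/2}\,\|f\|_{L^2}^{\sigma}\,\|\partial_1 f\|_{L^2}^{1/2-\sigma}.
\]

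For this embedding I would use the $1$D Gagliardo--Nirenberg bound $\|f(x_1,\cdot)\|_{L^\infty_{x_2}}\le C\|f(x_1,\cdot)\|_{L^2_{x_2}}^{1/2}\|\partial_2 f(x_1,\cdot)\|_{L^2_{x_2}}^{1/2}$, raise to the $1/\sigma$ power, integrate in $x_1$, and then apply H\"older in $x_1$ with the exponent pair $\bigl(4\sigma/(4\sigma-1),\,4\sigma\bigr)$. The exponent $4\sigma$ is admissible precisely because $\sigma>1/4$, and it is chosen so that the $\partial_2 f$ factor is assembled into the full $L^2(\mathbb{R}^2)$ norm to the exact power $1/2$. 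The remaining factor becomes $\|f\|_{L^{2/(4\sigma-1)}_{x_1}(L^2_{x_2})}^{1/2}$; since $2/(4\sigma-1)\ge 2$ (using $\sigma<1/2$), Minkowski's inequality swaps the two norms, after which a one-dimensional Gagliardo--Nirenberg interpolation in $x_1$ gives $\|f(\cdot,x_2)\|_{L^{2/(4\sigma-1)}_{x_1}}\le C\|f\|_{L^2_{x_1}}^{2\sigma}\|\partial_1 f\|_{L^2_{x_1}}^{1-2\sigma}$, and a final H\"older in $x_2$ with conjugate pair $(1/(2\sigma),1/(1-2\sigma))$ produces exactly $\|f\|_{L^2}^{\sigma}\|\partial_1 f\|_{L^2}^{1/2-\sigma}$.

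The main obstacle is the choice of the H\"older split: a naive approach applying Gagliardo--Nirenberg in both directions independently yields $\|\partial_2 f\|_{L^2}^{1/2+\sigma}\|\partial_1\partial_2 f\|_{L^2}^{1/2-\sigma}$, which introduces a cross derivative that is useless for the applications of this lemma in the sequel. The exponent pair $\bigl(4\sigma/(4\sigma-1),\,4\sigma\bigr)$ is dictated by the requirement that the $\partial_2 f$-factor be loaded with an integer exponent $2$ inside the $L^{1/\sigma}_{x_1}$ integral, so that only a single, clean $\|\partial_2 f\|_{L^2}^{1/2}$ appears after taking the $\sigma$-th power. The admissible range $\sigma\in(1/4,1/2)$ is exactly what makes this split legal.
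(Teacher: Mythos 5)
Your proposal is correct and follows essentially the same route as the paper's proof: one-dimensional Hardy--Littlewood--Sobolev in $x_1$, reduction to $\|f\|_{L^{1/\sigma}_{x_1}(L^\infty_{x_2})}\|g\|_{L^2}$, Gagliardo--Nirenberg in $x_2$ followed by the H\"older split with exponents $\bigl(4/(4\sigma-1),4\bigr)$ on the half-powers, and a Minkowski swap. The only (immaterial) difference is the final step, where the paper passes through the Sobolev embedding $L^{2/(4\sigma-1)}_{x_1}\supset \dot H^{1-2\sigma}_{x_1}$ and interpolates $\|\Lambda_1^{1-2\sigma}f\|_{L^2}\le\|f\|_{L^2}^{2\sigma}\|\partial_1 f\|_{L^2}^{1-2\sigma}$, whereas you use a slice-wise one-dimensional Gagliardo--Nirenberg inequality in $x_1$ plus a H\"older in $x_2$ — both yield the same bound.
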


\begin{proof}
	The proof relies on anisotropic Sobolev embeddings and interpolation estimates and is specific to the mixed regularity structure of the problem.  By Lemma \ref{HLSlem}, we have \begin{align} \left\|\Lambda_1^{-\sigma}(fg)\right\|_{L^2} =\left\| \left\|\Lambda_1^{-\sigma}(fg)\right\|_{L_{x_1}^2} \right\|_{L_{x_2}^2} \leq \left\| \left\|fg\right\|_{L_{x_1}^q} \right\|_{L_{x_2}^2}, \label{prod_est} \end{align} 
	where $\frac{1}{q} = \frac{1}{2} + \sigma$. Since $q<2$, by the Minkowski and H\"older inequalities, we obtain \begin{align} \left\| \left\|fg\right\|_{L_{x_1}^q} \right\|_{L_{x_2}^2} &\leq \left\| \left\|fg\right\|_{L_{x_2}^2} \right\|_{L_{x_1}^q} \leq \left\| \left\|f\right\|_{L_{x_2}^\infty} \left\|g\right\|_{L_{x_2}^2} \right\|_{L_{x_1}^q} \leq \left\|f\right\|_{L_{x_1}^{\frac{1}{\sigma}} L_{x_2}^\infty} \left\|g\right\|_{L^2}. \label{fLxp} \end{align} Using Minkowski's inequality and Gagliardo-Nirenberg's inequality in $x_2$-direction, we have \begin{equation*} \left\|f\right\|_{L_{x_1}^{\frac{1}{\sigma}} L_{x_2}^\infty} \leq C\left\| \left\|f\right\|_{L_{x_2}^2}^{\frac{1}{2}} \left\|\partial_2 f\right\|_{L_{x_2}^2}^{\frac{1}{2}} \right\|_{L_{x_1}^{\frac{1}{\sigma}}}.\end{equation*} Applying H\"older's inequality in $x_1$-variable, we have \begin{equation*} \left\| \left\|f\right\|_{L_{x_2}^2}^{\frac{1}{2}} \left\|\partial_2 f\right\|_{L_{x_2}^2}^{\frac{1}{2}} \right\|_{L_{x_1}^{\frac{1}{\sigma}}} \leq C\left\| \left\|f\right\|_{L_{x_2}^2}^{\frac{1}{2}} \right\|_{L_{x_1}^{\frac{4}{4\sigma-1}}} \left\| \left\|\partial_2 f\right\|_{L_{x_2}^2}^{\frac{1}{2}} \right\|_{L_{x_1}^4}= C\left\|f\right\|_{L_{x_2}^2 L_{x_1}^{\frac{2}{4\sigma-1}} }^{\frac{1}{2}} \left\|\partial_2 f\right\|_{L^2}^{\frac{1}{2}}. \end{equation*} Using Minkowski's inequality and the Sobolev embedding in $x_1$-direction, we deduce \begin{equation*} \left\|f\right\|_{L_{x_2}^2 L_{x_1}^{\frac{2}{4\sigma-1}}}^{\frac{1}{2}} \left\|\partial_2 f\right\|_{L^2}^{\frac{1}{2}} \leq \left\|f\right\|_{L_{x_1}^{\frac{2}{4\sigma-1}} L_{x_2}^2}^{\frac{1}{2}} \left\|\partial_2 f\right\|_{L^2}^{\frac{1}{2}} \leq C\left\|\Lambda_1^{1-2\sigma} f\right\|_{L^2}^{\frac{1}{2}} \left\|\partial_2 f\right\|_{L^2}^{\frac{1}{2}}. \end{equation*} By interpolation, we have 
\begin{align*} \left\|\Lambda_1^{1-2\sigma} f\right\|_{L^2}^{\frac{1}{2}} \left\|\partial_2 f\right\|_{L^2}^{\frac{1}{2}} &\leq C \left\|\partial_2 f\right\|_{L^2}^{\frac{1}{2}} \left( \left\|f\right\|_{L^2}^{2\sigma} \left\|\partial_1 f\right\|_{L^2}^{1-2\sigma} \right)^{\frac{1}{2}} \\ &\leq C \left\|\partial_2 f\right\|_{L^2}^{\frac{1}{2}} \left\|f\right\|_{L^2}^{\sigma} \left\|\partial_1 f\right\|_{L^2}^{\frac{1}{2}-\sigma}. \end{align*} Thus we conclude \begin{align} \left\|f\right\|_{L_{x_1}^{\frac{1}{\sigma}} L_{x_2}^\infty} \leq C \left\|\partial_2 f\right\|_{L^2}^{\frac{1}{2}} \left\|f\right\|_{L^2}^{\sigma} \left\|\partial_1 f\right\|_{L^2}^{\frac{1}{2}-\sigma}. \label{fLxsigma-1} \end{align} 
Combining estimates \eqref{prod_est}, \eqref{fLxp}, and \eqref{fLxsigma-1}, we obtain
\begin{align*}
	\|\Lambda_1^{-\sigma}(fg)\|_{L^2}
	\le
	C \|\partial_2 f\|_{L^2}^{\frac{1}{2}}
	\|f\|_{L^2}^{\sigma}
	\|\partial_1 f\|_{L^2}^{\frac{1}{2}-\sigma}
	\|g\|_{L^2},
\end{align*}
which completes the proof.
\end{proof}
The fractional heat semigroup estimate is frequently used in the following text,  the proof of which can be found in \cite{MYZ}.
\begin{lem}\label{lem4}
	Let $\sigma \ge 0$, $\alpha>0$, $\nu>0$, $1 \leq p \leq q \leq \infty$. Then 
	\begin{align*}
		\|\Lambda^\sigma e^{-\nu(-\Delta)^\alpha t} f\|_{L^q\left(\mathbb{R}^d\right)} \le C t^{-\frac{\sigma}{2 \alpha}-\frac{d}{2 \alpha}\left(\frac{1}{p}-\frac{1}{q}\right)}\|f\|_{L^p\left(\mathbb{R}^d\right)}.
	\end{align*}
\end{lem}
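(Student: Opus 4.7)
The plan is to write the operator as convolution with a kernel and combine the parabolic scaling of the fractional heat semigroup with Young's convolution inequality. Define the kernel
\[
K_{\sigma,t}(x):=\mathcal{F}^{-1}\!\bigl(|\xi|^\sigma e^{-\nu t|\xi|^{2\alpha}}\bigr)(x),
\]
so that $\Lambda^\sigma e^{-\nu(-\Delta)^\alpha t}f=K_{\sigma,t}\ast f$. The substitution $\eta=t^{1/(2\alpha)}\xi$ in the inverse Fourier integral gives the self-similar identity
\[
K_{\sigma,t}(x)=t^{-\frac{d+\sigma}{2\alpha}}\,K_{\sigma,1}\!\bigl(t^{-1/(2\alpha)}x\bigr),
\]
and a further change of variables in $x$ yields
\[
\|K_{\sigma,t}\|_{L^r(\mathbb{R}^d)}=t^{-\frac{\sigma}{2\alpha}-\frac{d}{2\alpha}\left(1-\frac{1}{r}\right)}\,\|K_{\sigma,1}\|_{L^r(\mathbb{R}^d)},\qquad r\in[1,\infty].
\]

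Next, I would pick $r$ determined by the Young relation $1+\tfrac{1}{q}=\tfrac{1}{r}+\tfrac{1}{p}$, equivalently $1-\tfrac{1}{r}=\tfrac{1}{p}-\tfrac{1}{q}$ (which is admissible since $p\le q$), and apply Young's convolution inequality:
\[
\|\Lambda^\sigma e^{-\nu(-\Delta)^\alpha t}f\|_{L^q}\le\|K_{\sigma,t}\|_{L^r}\|f\|_{L^p}=C\,t^{-\frac{\sigma}{2\alpha}-\frac{d}{2\alpha}\left(\frac{1}{p}-\frac{1}{q}\right)}\|f\|_{L^p},
\]
with $C=\|K_{\sigma,1}\|_{L^r}$, which is exactly the bound stated in the lemma. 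The whole dimensional count is thus produced automatically by the self-similar scaling of the semigroup.

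The remaining, and principal, obstacle is to verify that $\|K_{\sigma,1}\|_{L^r}<\infty$ for every $r\in[1,\infty]$. The $L^\infty$ bound is immediate from Fourier inversion, since $\int_{\mathbb{R}^d}|\xi|^\sigma e^{-\nu|\xi|^{2\alpha}}\,d\xi<\infty$ for $\sigma\ge 0$. The $L^1$ bound is more delicate because, when $\sigma$ (or $2\alpha$) is not an even integer, the symbol $|\xi|^\sigma e^{-\nu|\xi|^{2\alpha}}$ fails to be smooth at the origin. I would decompose via a cutoff $\chi\in C_c^\infty(\mathbb{R}^d)$ equal to $1$ near $0$: the high-frequency piece $(1-\chi(\xi))|\xi|^\sigma e^{-\nu|\xi|^{2\alpha}}$ is a Schwartz function, whose inverse Fourier transform is thus in $L^1$; for the compactly supported low-frequency piece, repeated integration by parts based on the identity $(-\Delta_\xi)^N e^{ix\cdot\xi}=|x|^{2N}e^{ix\cdot\xi}$, combined with the integrability of $|\xi|^{\sigma-2N}$ near the origin for $N$ chosen in the admissible range, yields the pointwise bound $|K_{\sigma,1}(x)|\lesssim (1+|x|)^{-N}$ for large $|x|$, which suffices for $L^1$ once $N>d$. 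Interpolating between $L^1$ and $L^\infty$ then delivers the bound in every $L^r$, closing the proof.
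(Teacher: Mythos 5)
The paper does not prove this lemma at all; it simply cites \cite{MYZ}, so your proposal is necessarily a different (self-contained) route. The skeleton you use --- write the operator as convolution with $K_{\sigma,t}$, extract the time dependence by the parabolic scaling $\xi\mapsto t^{-1/(2\alpha)}\eta$, and conclude by Young's inequality with $1-\tfrac1r=\tfrac1p-\tfrac1q$ --- is the standard argument and is correct; the scaling identity, the resulting formula for $\|K_{\sigma,t}\|_{L^r}$, and the exponent bookkeeping all check out.

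The gap is in your verification that $K_{\sigma,1}\in L^1(\mathbb{R}^d)$, specifically in the low-frequency piece. After $N$ applications of $(-\Delta_\xi)$ you gain the factor $|x|^{2N}$ (not $|x|^{N}$ as written), and the worst term produced by differentiating $\chi(\xi)|\xi|^\sigma e^{-\nu|\xi|^{2\alpha}}$ behaves like $|\xi|^{\sigma-2N}$ near the origin. Integrability near $\xi=0$ forces $2N<d+\sigma$, while integrability of $|x|^{-2N}$ at spatial infinity forces $2N>d$; an integer $N$ with $d<2N<d+\sigma$ exists only when $\sigma$ is large enough (e.g.\ for $d=2$, $\sigma=\tfrac12$ there is none), and the case $\sigma=0$ --- which the paper uses constantly for the bare semigroup --- is excluded outright. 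So ``$N$ chosen in the admissible range'' is vacuous in exactly the regime the lemma is needed. The standard repair is to replace the single integration by parts with a dyadic decomposition of the low-frequency symbol: writing $\chi(\xi)|\xi|^\sigma e^{-\nu|\xi|^{2\alpha}}=\sum_{j\le 0}\phi(2^{-j}\xi)|\xi|^\sigma e^{-\nu|\xi|^{2\alpha}}$ with $\phi$ supported in an annulus, each block is (after rescaling) a uniformly bounded bump whose inverse Fourier transform has $L^1$ norm $O(2^{j\sigma})$, and the sum converges for $\sigma>0$; for $\sigma=0$ one instead subtracts the constant $e^{0}=1$ (whose contribution $\chi$ is Schwartz) and applies the same dyadic argument to the remainder $\chi(\xi)\bigl(e^{-\nu|\xi|^{2\alpha}}-1\bigr)=O(|\xi|^{2\alpha})$, which has positive regularity index $2\alpha$. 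With that replacement your proof closes; as written, the low-frequency $L^1$ estimate does not.
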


The following lemma provides a decay estimate for a convolution-type integral. 
Its proof is given in Appendix~\ref{appA}.

\begin{lem}\label{decay lem}
    Assume $\alpha \ge 1$, $\beta >1 $. Then, for some constant $C> 0$ and $t>1$, we have
\begin{align}\label{decay first}
    &\int_0^{t-1}(t-\tau)^{-\alpha}(1+\tau)^{-\beta}\:d\tau\leq
    \begin{cases}
        C (1+t)^{-\alpha},\ &\alpha \le \beta,\\
        C (1+t)^{-\beta},\ &\alpha > \beta.\\
    \end{cases}
\end{align}
Assume $\alpha < 1$. Then, for some constant $C> 0$, we have
\begin{align}\label{decay second}
    &\int_0^{t}(t-\tau)^{-\alpha}(1+\tau)^{-\beta}\:d\tau\leq
    \begin{cases}
        C(1+t)^{-\alpha},\ &\beta >1,\\
        C(1+t)^{-\alpha} \ln(1+t),\ &\beta=1,\\
        C(1+t)^{1-\alpha-\beta},\ &\beta<1.
    \end{cases}
\end{align}
\end{lem}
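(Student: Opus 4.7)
The plan is a standard splitting of the time integral at the midpoint $t/2$ to isolate the two natural regimes: one where $(t-\tau)^{-\alpha}$ is essentially frozen at $t^{-\alpha}$, and one where $(1+\tau)^{-\beta}$ is essentially frozen at $(1+t)^{-\beta}$. Throughout I will use that $t$ and $1+t$ are comparable for $t$ bounded away from $0$, and that $1+\tau \ge 1 + t/2 \ge C(1+t)$ for $\tau \in [t/2,t]$, while $t - \tau \ge t/2$ for $\tau \in [0, t/2]$.

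For the first part (\(\alpha \ge 1\), \(\beta > 1\)), I would write
\[
\int_0^{t-1}(t-\tau)^{-\alpha}(1+\tau)^{-\beta}\,d\tau
=\Big(\int_0^{t/2}+\int_{t/2}^{t-1}\Big)(t-\tau)^{-\alpha}(1+\tau)^{-\beta}\,d\tau .
\]
On $[0,t/2]$, factor out $(t-\tau)^{-\alpha}\le C(1+t)^{-\alpha}$ and observe that $\int_0^{\infty}(1+\tau)^{-\beta}d\tau<\infty$ since $\beta>1$, giving $C(1+t)^{-\alpha}$. On $[t/2,t-1]$, factor out $(1+\tau)^{-\beta}\le C(1+t)^{-\beta}$; the change of variables $s=t-\tau$ reduces the remaining integral to $\int_1^{t/2}s^{-\alpha}ds$, which is $O(1)$ when $\alpha>1$ and $O(\ln(1+t))$ when $\alpha=1$. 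Combining, the bound is $C(1+t)^{-\alpha}+C(1+t)^{-\beta}$ (with the logarithmic factor absorbed, since in the case $\alpha=1$ the assumption $\beta>1$ forces $\alpha\le\beta$ and $(1+t)^{-\beta}\ln(1+t)\le C(1+t)^{-1}$). Whichever of $-\alpha$ or $-\beta$ is larger (less negative) determines the two cases in the statement.

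For the second part (\(\alpha < 1\)), I would similarly split $\int_0^t=\int_0^{t/2}+\int_{t/2}^{t}$. On $[t/2,t]$, $(1+\tau)^{-\beta}\le C(1+t)^{-\beta}$ and a change of variable gives $\int_0^{t/2}s^{-\alpha}ds\le C(1+t)^{1-\alpha}$, producing $C(1+t)^{1-\alpha-\beta}$. On $[0,t/2]$, $(t-\tau)^{-\alpha}\le C(1+t)^{-\alpha}$, and the remaining $\int_0^{t/2}(1+\tau)^{-\beta}d\tau$ is bounded by $C$ if $\beta>1$, by $C\ln(1+t)$ if $\beta=1$, and by $C(1+t)^{1-\beta}$ if $\beta<1$. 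Summing the two contributions and retaining the dominant one in each case yields exactly the three subcases in \eqref{decay second}: for $\beta>1$ the $(1+t)^{-\alpha}$ piece dominates since $1-\alpha-\beta<-\alpha$; for $\beta=1$ the logarithmic piece dominates; and for $\beta<1$ both pieces give the same $(1+t)^{1-\alpha-\beta}$ rate.

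There is no real obstacle here — the argument is purely elementary; the only item requiring care is the borderline $\alpha=1$ subcase in the first part, where the inner integral produces a logarithm that must be absorbed into $(1+t)^{-\beta}$ using $\beta>1$, so that one still obtains a clean polynomial decay without any spurious $\ln$ factor. I would write this absorption step explicitly to make the statement as given come out verbatim.
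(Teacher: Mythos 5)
Your proposal is correct and follows essentially the same route as the paper's proof: split the integral at the midpoint, freeze $(t-\tau)^{-\alpha}$ on the first half and $(1+\tau)^{-\beta}$ on the second, and compare the resulting rates (the paper splits the first integral at $(t-1)/2$ rather than $t/2$, which is immaterial). Your explicit treatment of the $\alpha=1$ logarithm absorption is a point the paper passes over silently, but it is the same argument.
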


The following lemmas describe several basic properties of the truncated power weight function.
The first lemma shows that the weighted $L^2$-norm is monotone with respect to the power of the weight.

\begin{lem}[Monotonicity of weights]\label{weight prop}
	For any $\gamma_1 \le \gamma_2$, one has
	\begin{align*}
		\|[x]^{\gamma_1} f\|_{L^2}
		\le
		\|[x]^{\gamma_2} f\|_{L^2}.
	\end{align*}
\end{lem}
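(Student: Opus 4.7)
The plan is to establish the inequality by a pointwise comparison of the two weights, after which the $L^2$ statement is immediate by monotonicity of the integral.

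First I would unpack Definition \ref{TPW}. On the set $\{|x|\le 1\}$, both weights agree: $[x]^{\gamma_1} = 1 = [x]^{\gamma_2}$. On the complementary set $\{|x|>1\}$, one has $[x]^{\gamma_i} = |x|^{\gamma_i}$ for $i=1,2$, and since $|x|>1$, the map $\gamma\mapsto |x|^\gamma$ is nondecreasing in $\gamma$, so $\gamma_1\le \gamma_2$ gives $|x|^{\gamma_1}\le |x|^{\gamma_2}$. Combining the two regions yields the pointwise bound
\[
[x]^{\gamma_1} \le [x]^{\gamma_2}, \qquad \text{for all } x\in\mathbb{R}.
\]

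From this, the lemma follows directly: multiplying by $|f(x)|^2 \ge 0$ and integrating preserves the inequality, so
\[
\|[x]^{\gamma_1} f\|_{L^2}^2 = \int \bigl([x]^{\gamma_1}\bigr)^2 |f|^2\,dx \le \int \bigl([x]^{\gamma_2}\bigr)^2 |f|^2\,dx = \|[x]^{\gamma_2} f\|_{L^2}^2,
\]
and taking square roots gives the stated bound. There is no real obstacle here; the only thing to be careful about is the case-split at $|x|=1$, which is handled uniformly by the definition since the two pieces glue continuously at the boundary of the unit disk.
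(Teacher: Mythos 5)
Your proof is correct and is precisely the ``straightforward'' argument the paper has in mind (the paper omits the proof of Lemma \ref{weight prop} entirely, remarking only that it is straightforward): the pointwise bound $[x]^{\gamma_1}\le[x]^{\gamma_2}$, which holds for all real $\gamma_1\le\gamma_2$ since the weights agree on $\{|x|\le1\}$ and $\gamma\mapsto|x|^{\gamma}$ is nondecreasing when $|x|>1$, immediately yields the $L^2$ inequality by monotonicity of the integral. Nothing further is needed.
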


The second lemma provides a Poincar\'e-Friedrichs type inequality involving truncated power weights.

\begin{lem}\label{weigh_poin_inequ}
	Let $0<\gamma\le 1$. Then the following weighted Poincar\'e-Friedrichs inequality holds:
	\begin{align*}
		\|[x]^\gamma f\|_{L^2(\mathbb{R})}
		\le
		C \|[x]^{\gamma+1} f'\|_{L^2(\mathbb{R})}.
	\end{align*}
\end{lem}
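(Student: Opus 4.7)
The plan is to split $\mathbb{R}$ into three pieces: the central interval $[-1,1]$ and the two outer tails $(1,\infty)$ and $(-\infty,-1)$. On each outer tail the inequality reduces to a weighted Hardy-type estimate that I would establish by integration by parts; on the central piece the weight is constant, so the only thing to control is $\int_{-1}^1 f^2\,dx$, which I would handle via the fundamental theorem of calculus anchored at $x=\pm1$, feeding in pointwise bounds on $f(\pm 1)$ produced as byproducts of the outer argument.

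For the right tail, assuming $f$ decays rapidly enough at infinity (which can be reduced to the case $f\in C_c^\infty(\mathbb{R})$ by a density argument in the norm $\|[x]^\gamma f\|_{L^2}+\|[x]^{\gamma+1}f'\|_{L^2}$), I would integrate by parts with antiderivative $x^{2\gamma+1}/(2\gamma+1)$ to obtain
\begin{align*}
\int_1^\infty x^{2\gamma}f(x)^2\,dx
= -\frac{f(1)^2}{2\gamma+1}
- \frac{2}{2\gamma+1}\int_1^\infty x^{2\gamma+1} f(x)f'(x)\,dx.
\end{align*}
Setting $A_+ := \int_1^\infty x^{2\gamma}f^2\,dx$ and $B_+ := \int_1^\infty x^{2\gamma+2}(f')^2\,dx$, Cauchy-Schwarz on the remaining integral yields $A_+ + f(1)^2/(2\gamma+1) \le \frac{2}{2\gamma+1}A_+^{1/2}B_+^{1/2}$. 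Dropping the nonnegative $f(1)^2$-contribution gives the Hardy bound $A_+ \le \frac{4}{(2\gamma+1)^2}B_+$, while retaining it together with this Hardy bound gives the pointwise estimate $f(1)^2 \le \frac{4}{2\gamma+1}B_+$. The same reasoning on $(-\infty,-1)$ yields analogous bounds $A_-\le CB_-$ and $f(-1)^2\le CB_-$.

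For the central piece, write $f(x)=f(1)-\int_x^1 f'(y)\,dy$ for $x\in[-1,1]$, so that Cauchy-Schwarz gives $|f(x)|^2 \le 2f(1)^2 + 4\int_{-1}^1 (f')^2\,dy$. Integrating in $x$ over $[-1,1]$ and using $[x]^{2\gamma+2}=1$ there, I would insert the pointwise bound $f(1)^2 \le CB_+$ to obtain $\int_{-1}^1 f^2\,dx \le C\bigl(B_+ + \int_{-1}^1 (f')^2\,dx\bigr) \le C\int_{\mathbb{R}}[x]^{2\gamma+2}(f')^2\,dx$. Summing the three contributions closes the inequality.

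The main obstacle is justifying the vanishing of the boundary term at infinity in the integration by parts step, since a priori the finiteness of $\|[x]^{\gamma+1}f'\|_{L^2}$ does not force $x^{\gamma+1/2}f(x)\to 0$. I would handle this by the standard approximation of $f$ by $C_c^\infty(\mathbb{R})$ functions in the weighted Sobolev norm defined above; for the smooth, decaying solutions encountered later in the paper, the boundary term vanishes automatically.
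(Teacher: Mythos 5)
Your proof is correct, but it follows a genuinely different route from the paper's. You decompose the \emph{domain} into the two tails $\{|x|>1\}$ and the core $[-1,1]$, run a weighted Hardy inequality on each tail (integration by parts against $x^{2\gamma+1}/(2\gamma+1)$, Cauchy--Schwarz, and the self-improving step $A_\pm^{1/2}\le \tfrac{2}{2\gamma+1}B_\pm^{1/2}$), extract the pointwise trace bounds $f(\pm1)^2\le \tfrac{4}{2\gamma+1}B_\pm$ as a byproduct, and then control the core by the fundamental theorem of calculus anchored at $x=\pm1$. The paper instead decomposes the \emph{weight}, writing $[x]^{\gamma}=|x|^{\gamma}+s(x)$ with $s$ supported in $[-1,1]$, applies a global Hardy-type identity on all of $\mathbb{R}$ to the terms carrying $|x|^{\gamma}$ and $|x|^{2\gamma}$, and handles the remaining core term $\int_{-1}^{1}f^2(1-|x|^{\gamma})^2\,dx$ via a Poincar\'e inequality on $[-1,1]$ followed by an absorption argument that relies on the numerical bound $8/\pi^2<1$. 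Your version buys explicit, $\gamma$-uniform constants on $(0,1]$ and avoids both the absorption step and the slightly ad hoc Poincar\'e inequality; the paper's version avoids any discussion of boundary terms at $x=\pm1$ by working globally. Both arguments share the same (standard) caveat about justifying the integration by parts at infinity and the a priori finiteness of the left-hand side, which you address explicitly via density while the paper leaves it implicit; this is harmless for the a priori estimates in which the lemma is used.
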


The third lemma establishes a weighted Gagliardo-Nirenberg type inequality.

\begin{lem}\label{lem_wGN}
	Let $\zeta>0$ and $\vartheta\in\mathbb{R}$. Then
	\begin{align*}
		\|[x_2]^{\zeta} f\|_{L^\infty(\mathbb{R})}
		\le
		C \|[x_2]^{\zeta-\frac12} f\|_{L^2(\mathbb{R})}
		+
		C \|[x_2]^{\zeta-\vartheta} f\|_{L^2(\mathbb{R})}^{\frac12}
		\|[x_2]^{\zeta+\vartheta} f'\|_{L^2(\mathbb{R})}^{\frac12}.
	\end{align*}
\end{lem}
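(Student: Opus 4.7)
The plan is to apply the one-dimensional fundamental theorem of calculus to the weighted function $g(x_2) := [x_2]^\zeta f(x_2)$ and to distribute the weight in the cross term according to $\vartheta$. First, by standard mollification and truncation (each factor $[x_2]^\gamma$ is continuous and locally bounded, hence induces a norm locally equivalent to the unweighted $L^2$-norm), we may assume $f\in C_c^\infty(\mathbb{R})$, so that all boundary terms at $\pm\infty$ vanish.

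Set $w(x_2):=[x_2]^\zeta$. A direct computation shows that $w$ is Lipschitz with $w'$ supported in $\{|x_2|>1\}$ and $|w'(x_2)|\le |\zeta|\,[x_2]^{\zeta-1}$ (this is where the hypothesis $\zeta>0$ is used, ensuring $[x_2]^{\zeta-1}$ is well-defined with the correct growth). Since $wf\in C_c^{0,1}(\mathbb{R})$,
\begin{align*}
    (wf)^2(x_2)=2\int_{-\infty}^{x_2}(wf)(y)(wf)'(y)\,dy
    \le 2\int_{\mathbb{R}}|wf|\cdot|w'f|\,dy+2\int_{\mathbb{R}}|wf|\cdot|wf'|\,dy,
\end{align*}
after expanding $(wf)'=w'f+wf'$. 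Taking the supremum over $x_2$ yields the key inequality
\begin{align*}
    \|[x_2]^\zeta f\|_{L^\infty}^2
    \le 2\int_{\mathbb{R}}|wf|\,|w'f|\,dy+2\int_{\mathbb{R}}|wf|\,|wf'|\,dy.
\end{align*}

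For the first integral, the pointwise bound $|wf|\,|w'f|\le |\zeta|\,[x_2]^{2\zeta-1}f^2$ integrates to $|\zeta|\,\|[x_2]^{\zeta-1/2}f\|_{L^2}^2$, accounting for the first term on the right-hand side of the claim. For the second integral, I use the elementary identity $[x_2]^a[x_2]^b=[x_2]^{a+b}$ (verified trivially in both regions $|x_2|\le 1$ and $|x_2|>1$) to factorize $[x_2]^{2\zeta}=[x_2]^{\zeta-\vartheta}\cdot[x_2]^{\zeta+\vartheta}$ for the arbitrary real parameter $\vartheta$; Cauchy--Schwarz then bounds it by $\|[x_2]^{\zeta-\vartheta}f\|_{L^2}\|[x_2]^{\zeta+\vartheta}f'\|_{L^2}$. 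Combining the two pieces and applying $\sqrt{a+b}\le\sqrt{a}+\sqrt{b}$ delivers the claimed inequality.

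I do not anticipate any real obstacle. The only point that deserves care is the density/approximation step: when some of the weighted norms on the right-hand side are infinite the inequality is vacuous, and when they are finite we smooth and truncate $f$, pass to the limit in each term (the weights are continuous, so dominated convergence applies on compact sets, and the cutoffs can be chosen so that the tails contribute arbitrarily little), and conclude. The factorization of $[x_2]^{2\zeta}$ is the clean mechanism that allows $\vartheta$ to be arbitrary, and it is the only place where the specific structure of the truncated power weight is used.
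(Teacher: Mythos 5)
Your proposal is correct and follows essentially the same route as the paper's proof: both write $[x_2]^{2\zeta}f^2$ as the integral of its derivative, bound the term coming from the weight's derivative by $[x_2]^{2\zeta-1}f^2$ to produce the $\|[x_2]^{\zeta-\frac12}f\|_{L^2}^2$ contribution, and split $[x_2]^{2\zeta}=[x_2]^{\zeta-\vartheta}[x_2]^{\zeta+\vartheta}$ before applying Cauchy--Schwarz to the $ff'$ term. Your added care about the density/approximation step and the Lipschitz structure of the truncated weight is a harmless refinement of the same argument.
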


The proof of Lemma~\ref{weight prop} is straightforward. The proofs of Lemmas~\ref{weigh_poin_inequ} and~\ref{lem_wGN} are deferred to Appendix~\ref{appA}.

To handle the Riesz operators in weighted $L^2$ spaces, we make use of the $A_p$ weight theory. 
We first recall the definition of $A_p$ weights.

\begin{defn}[$A_p$ weights]\label{ap_def}
	A weight function $w\colon \mathbb{R}^d \to [0,\infty)$ is said to belong to the $A_p$ class if:
	\begin{enumerate}
		\item $w \in L_{\mathrm{loc}}^{1}(\mathbb{R}^d)$ and $w(x)>0$ almost everywhere;
		\item there exists a constant $C>0$ such that for every cube $Q\subset \mathbb{R}^d$, the $A_p$ condition
		\begin{equation*}
			\left(\frac{1}{|Q|}\int_Q w(y)\,dy\right)
			\left(\frac{1}{|Q|}\int_Q w(y)^{-\frac{1}{p-1}}\,dy\right)^{p-1}
			\le C,
			\qquad 1<p<\infty,
		\end{equation*}
		holds, where $C$ is independent of $Q$.
	\end{enumerate}
\end{defn}

The following lemma gives the boundedness of Riesz operators in weighted $L^2$ spaces. 
Its proof is deferred to Appendix~\ref{appA}.

\begin{lem}\label{lem_bd_CZ}
	Let $\mathcal{R}_1=\partial_1\Lambda^{-1}$ and $\mathcal{R}_2=\partial_2\Lambda^{-1}$ be the Riesz transforms. 
	Let $T$ be any one of the operators $\mathcal{R}_1^2$, $\mathcal{R}_1\mathcal{R}_2$, or $\mathcal{R}_2^2$.
	Then, for any $-1<\kappa<1$, there exists a constant $C>0$ such that
	\begin{equation*}
		\int_{\mathbb{R}^2} |Tf(x)|^2\, [x_2]^{\kappa}\, dx
		\le
		C \int_{\mathbb{R}^2} |f(x)|^2\, [x_2]^{\kappa}\, dx.
	\end{equation*}
\end{lem}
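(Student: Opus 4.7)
The plan is to reduce the statement to the classical theory of $A_p$ weights. Each of the composed Riesz transforms $T \in \{\mathcal{R}_1^2, \mathcal{R}_1\mathcal{R}_2, \mathcal{R}_2^2\}$ is a convolution Calder\'on--Zygmund operator on $\mathbb{R}^2$: its Fourier multiplier $-\xi_i\xi_j/|\xi|^2$ is degree-zero homogeneous and smooth away from the origin, so the associated principal-value kernel is of standard CZ type. Consequently, once we verify that $w(x):=[x_2]^{\kappa}$ lies in the Muckenhoupt class $A_2(\mathbb{R}^2)$, the weighted bound $\|Tf\|_{L^2(w\,dx)}\le C\|f\|_{L^2(w\,dx)}$ follows immediately from the Coifman--Fefferman weighted estimate for CZ operators with $p=2$.

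The first step is to reduce the two-dimensional $A_2$ condition to a one-dimensional one. Since $w$ depends only on $x_2$, for any cube $Q=I_1\times I_2\subset\mathbb{R}^2$ with $|I_1|=|I_2|=\ell$ the integrals factor and the $A_2$ product at $Q$ becomes
\[
\left(\frac{1}{|I_2|}\int_{I_2}[y]^{\kappa}\,dy\right)\left(\frac{1}{|I_2|}\int_{I_2}[y]^{-\kappa}\,dy\right),
\]
so it suffices to show that $[y]^{\kappa}\in A_2(\mathbb{R})$ for every $-1<\kappa<1$.

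For the second step I would carry out a case analysis on the interval $I=[a,a+\ell]\subset\mathbb{R}$. If $I\subset [-1,1]$ the weight is identically $1$ and the product equals $1$. If $I\cap[-1,1]=\emptyset$ then $[y]^{\kappa}=|y|^{\kappa}$ on $I$, and the product reduces to the classical computation showing $|y|^{\kappa}\in A_2(\mathbb{R})$ precisely for $-1<\kappa<1$. In the remaining straddling cases I would use the pointwise comparability $[y]^{\kappa}\sim (1+|y|)^{\kappa}$, split $I$ into $I\cap[-1,1]$ and its complement, and bound each of the two averages by a constant times an appropriate power of $(1+\ell+|a|)$; the resulting product is uniformly controlled in $(a,\ell)$ by a constant depending only on $\kappa$.

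The main technical point is this straddling case, because truncating $|y|^{\kappa}$ at $|y|=1$ destroys the scaling invariance that makes the pure power weight easy to handle. Once one passes to the equivalent weight $(1+|y|)^{\kappa}$, however, the verification becomes a routine averaging exercise, and the restriction $-1<\kappa<1$ enters in exactly the same way as in the pure power case, ensuring that both $(1+|y|)^{\pm\kappa}$ are locally integrable and that the two averages balance. With $w\in A_2(\mathbb{R}^2)$ in hand, the stated weighted inequality is immediate from the weighted CZ theorem.
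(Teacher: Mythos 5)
Your proposal is correct and follows essentially the same route as the paper: reduce the two-dimensional $A_2$ condition for $[x_2]^{\kappa}$ to a one-dimensional one by factoring the averages over rectangles, verify that the truncated power weight belongs to $A_2(\mathbb{R})$, and invoke the weighted $L^2$ boundedness of Calder\'on--Zygmund operators. The only (immaterial) difference lies in the one-dimensional verification, where the paper compares directly with $|x_2|^{\kappa}$ via the pointwise bounds $0\le [x_2]^{\kappa}-|x_2|^{\kappa}\le 1$ and $[x_2]^{-\kappa}\le \min\{1,|x_2|^{-\kappa}\}$ rather than running your three-case analysis against $(1+|y|)^{\kappa}$.
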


\vskip .1in 
\section{Proof of Theorem \ref{thm1}}\label{sec:proof 1}

This section is devoted to the proof of Theorem~\ref{thm1}. 
We first note that the 2D Navier-Stokes equations with fractional horizontal dissipation
are locally well posed in the Sobolev space $H^k(\mathbb{R}^2)$ for any integer $k\ge 3$.
The local existence and uniqueness can be established by standard arguments similar to those
used for the classical Navier-Stokes or Euler equations; see, for example,~\cite{MB}.
We therefore omit the details of the local theory.

Our main effort in this section is to derive global-in-time \emph{a priori} bounds,
which allow the local solutions to be extended globally.
These bounds are obtained through careful energy estimates.
A crucial ingredient is a suitable upper bound on the nonlinear term,
which compensates for the lack of vertical dissipation.
The key estimate is summarized in the following lemma.  The proof of this lemma is nontrivial and is based on a case-by-case analysis in the ranges
$0\le s\le \tfrac14$, $\tfrac14\le s\le \tfrac12$, and $\tfrac12\le s\le \tfrac34$.
The proof is given after the proof of Theorem~\ref{thm1}.

\begin{lem}\label{observ lem}
	Let $k\ge3$ be an integer, as in Theorem~\ref{thm1}.
	For any integer $1\le \beta \le k$, $\ell\in\{1,2\}$, and $0\le s\le \frac{3}{4}$,
	the following estimate holds:
	\begin{align}\label{new nonliear}
		\sum_{\beta=1}^k
		\int_{\mathbb{R}^2}
		\partial_{\ell}^{\beta} f\,
		\partial_1 \partial_{\ell}^{k-\beta} f\,
		\partial_{\ell}^k f\, dx
		\le
		C_0 \|f\|_{H^k}\,
		\|\Lambda_1^{\,s} f\|_{H^k}^2,
	\end{align}
	where $C_0>0$ is a constant independent of $f$.
\end{lem}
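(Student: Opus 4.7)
The plan is to handle the two directions $\ell=1$ and $\ell=2$ separately, with most of the effort devoted to the latter. For $\ell=1$, every derivative appearing in the integrand is horizontal, so the estimate reduces to a horizontal product law: applying Lemma~\ref{lem2} in the $x_1$-direction (treating $x_2$ as a parameter) and then redistributing the $2k+1$ derivatives so that two factors absorb $\Lambda_1^{s+\alpha}$ and one factor is controlled in $L^\infty$ by the 2D Sobolev embedding $H^k\hookrightarrow L^\infty$ (valid since $k\ge 3$) immediately yields the desired bound, with derivative budget to spare.

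The substantive case is $\ell=2$, where
$$I_\beta=\int_{\mathbb{R}^2}\partial_2^\beta f\cdot\partial_1\partial_2^{k-\beta}f\cdot\partial_2^k f\,dx$$
involves only a single horizontal derivative. The overall strategy is to apply H\"older with an anisotropic Lebesgue triple such as $(L^\infty_{x_1}L^2_{x_2},\,L^2_{x_1}L^\infty_{x_2},\,L^2)$, placing the top-order factor in $L^2$ and reducing the two mixed-norm factors to $L^2$ norms of derivatives via the 1D Gagliardo--Nirenberg inequalities
$$\|g\|_{L^\infty_{x_2}}\lesssim\|g\|_{L^2_{x_2}}^{1/2}\|\partial_2 g\|_{L^2_{x_2}}^{1/2},\qquad \|g\|_{L^\infty_{x_1}}\lesssim\|g\|_{L^2_{x_1}}^{1-1/(2s)}\|\Lambda_1^s g\|_{L^2_{x_1}}^{1/(2s)}\ \ (s>\tfrac12),$$
combined with the anisotropic product lemmas~\ref{multi one}--\ref{multi two}. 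Sub-cases according to whether $\beta\le\lfloor k/2\rfloor$ or $\beta>\lfloor k/2\rfloor$ fix which factor sits in which slot, so that the vertical Gagliardo--Nirenberg is always paired with a factor carrying enough $\partial_2$-regularity.

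The analysis in $s$ then splits into three regimes matching the hint in the lemma statement. For $\tfrac12<s\le\tfrac34$, the fractional horizontal embedding above applies directly; writing $\partial_1=\mathcal H_1\Lambda_1^{1-s}\circ\Lambda_1^s$ (with $\mathcal H_1$ the Hilbert transform in $x_1$, bounded on $L^2$) places one $\Lambda_1^s$ onto the middle factor, while the second $\Lambda_1^s$ is produced by the embedding of the $L^\infty_{x_1}$ factor, and the leftover $\Lambda_1^{1-s}$ tail is absorbed into the $\|f\|_{H^k}$ slot since $k\ge 3\ge 1-s$. For $\tfrac14\le s\le\tfrac12$ the horizontal Sobolev embedding no longer yields $\Lambda_1^s$, so we instead apply Lemma~\ref{multi two} and interpolate in the $x_1$-direction between $\|g\|_{L^2}$ and $\|\Lambda_1^s g\|_{L^2}$ to manufacture the two fractional copies. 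For $0\le s\le\tfrac14$ the horizontal regularity is weakest; we integrate by parts in $x_1$ to shift $\partial_1$ off the middle factor, apply Lemma~\ref{multi one} to the resulting pair in $L^1_{x_1}L^2_{x_2}$ against the third factor in $L^\infty_{x_1}L^2_{x_2}$, and again absorb any $\Lambda_1^{1-s}$-tail into the $\|f\|_{H^k}$ slot whose budget is most generous. The main obstacle is this small-$s$ regime, where neither horizontal Sobolev embedding nor a single anisotropic product estimate alone produces two full copies of $\Lambda_1^s$: one must coordinate integration by parts, the anisotropic product lemmas, and Bernstein/interpolation arguments so that the $\Lambda_1^s$ factors emerge without overspending the derivative budget, and the extreme values $\beta=1$ and $\beta=k$ require separate verification since the budget is tightest there.
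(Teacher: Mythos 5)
Your toolbox and even your case boundaries ($s=\tfrac14$, $s=\tfrac12$) match the paper's, but in every regime the step that actually manufactures \emph{two full powers} of $\|\Lambda_1^{s}f\|_{H^k}$ --- which is the entire content of the lemma, since a bound of the form $\|\Lambda_1^{s}f\|_{H^k}^{a}\|f\|_{H^k}^{3-a}$ with $a<2$ does not imply \eqref{new nonliear} (that would require $\|f\|_{H^k}\lesssim\|\Lambda_1^{s}f\|_{H^k}$, false at low horizontal frequencies) --- is missing or would fail. For $\tfrac12<s\le\tfrac34$ your count gives dissipative power $1+\tfrac{1}{2s}<2$: one full $\Lambda_1^{s}$ from $\partial_1=\mathcal H_1\Lambda_1^{1-s}\Lambda_1^{s}$, only the fractional power $\tfrac{1}{2s}$ from the Gagliardo--Nirenberg $L^\infty_{x_1}$ bound, and none from the top-order factor placed in plain $L^2$. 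Moreover your argument never invokes $s\le\tfrac34$, which is the sharp threshold; the paper is forced into a low/high frequency splitting in $\xi_1$ here, with $s\le\tfrac34$ entering through $|\xi_1|^{3/2-s}\chi(\xi_1)\le C|\xi_1|^{s}\chi(\xi_1)$ on the low-frequency piece. For $0\le s\le\tfrac14$, integrating by parts in $x_1$ puts $\partial_1$ onto $\partial_\ell^{k}f$, and $\|\partial_1\partial_\ell^{k}f\|_{L^2}$ (order $k+1$ with only $s<1$ of horizontal regularity available) is controlled by neither norm on the right of \eqref{new nonliear}; the paper instead leaves $\partial_1$ on the factor carrying $k-\beta\le k-1$ derivatives and generates \emph{both} copies of $\Lambda_1^{s}$ from the embedding $\dot H^{s}_{x_1}(\mathbb{R})\hookrightarrow L^{2/(1-2s)}_{x_1}$ applied to the other two factors, the remaining factor sitting in $L^{1/(2s)}_{x_1}L^\infty_{x_2}$ and being controlled by $\|\Lambda_1^{1/2-2s}\,\cdot\,\|_{L^2_{x_1}H^1_{x_2}}$. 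For $\tfrac14\le s<\tfrac12$, interpolation between $\|g\|_{L^2}$ and $\|\Lambda_1^{s}g\|_{L^2}$ only produces $\Lambda_1^{s\theta}$ with $\theta\le1$ and can never absorb the full $\partial_1$ on the middle factor; the paper's mechanism is $\|\partial_1 h\|_{L^{1/s}_{x_1}L^2_{x_2}}\le\|\Lambda_1^{\frac{1-2s}{2}}\partial_1 h\|_{L^2}=\|\Lambda_1^{s}\Lambda_1^{\frac32-2s}h\|_{L^2}$, with the excess $\tfrac32-2s\le1$ charged against the derivative count $k-\beta\le k-1$.

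Separately, the $\ell=1$ case is not "immediate with derivative budget to spare": for $\beta=k$ the term $\int_{\mathbb{R}^2}(\partial_1^{k}f)^2\,\partial_1 f\,dx$, estimated with one factor in $L^\infty$ via $H^k\hookrightarrow L^\infty$, yields $\|f\|_{H^k}^{3}$ with no dissipative factor at all. The direction of the derivatives in the integrand is irrelevant to the difficulty; the paper runs the identical anisotropic argument for $\ell=1$ and $\ell=2$, and the point in both cases is that the single $\partial_1$ plus the Sobolev embeddings in $x_1$ must jointly supply $\Lambda_1^{s}$ exactly twice.
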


\noindent
\subsection{Proof of Theorem \ref{thm1}}

\begin{proof}[Proof of Theorem \ref{thm1}:] The proof is devoted to  deriving global-in-time \emph{a priori} bounds on solutions of (\ref{NS}).  By virtue of the norm equivalence
\begin{equation*}
	\|f\|_{H^k}^2 \sim \|f\|_{L^2}^2
	+ \sum_{m=1}^2 \|\partial_m^k f\|_{L^2}^2,
\end{equation*}
it suffices to derive energy estimates for the $L^2$ norm and the highest-order derivatives.
By a standard energy argument, we obtain the $L^2$ energy estimate for $u$,
\begin{equation}\label{L2energy}
	\|u(t)\|_{L^2}^2
	+ 2\nu \int_0^t \|\Lambda_1^{s} u(\tau)\|_{L^2}^2 \, d\tau
	= \|u_0\|_{L^2}^2.
\end{equation}
We now turn to the estimate for the highest-order derivatives.
Applying $\partial_m^k$ to the equation \eqref{NS} and taking the $L^2$ inner product
with $\partial_m^k u$ yields
\begin{equation}\label{ddtpkmu}
\sum_{m=1}^2\frac{d}{d t}\left\|\partial_m^k u\right\|_{L^2}^2+2 \nu \sum_{m=1}^2\left\|\Lambda_1^s \partial_m^k u\right\|_{L^2}^2=-2 \sum_{m=1}^2 \big(\partial_m^k(u \cdot \nabla u) , \partial_m^k u\big)_{L^2}\overset{\text{def}}{=} N.
\end{equation}
To estimate the nonlinear term $N$, we rewrite it as
\begin{equation}\label{nonlinear terms}
	N=-2 \sum_{m,i,j=1}^2 \sum_{\alpha=1}^k \binom{k}{\alpha}
	\big( \partial_m^\alpha u_j\, \partial_j \partial_m^{k-\alpha} u_i,\,
	\partial_m^k u_i \big)_{L^2},
\end{equation}
where $\binom{k}{\alpha}=\frac{k!}{\alpha!(k-\alpha)!}$ is the binomial coefficient.
Here we have used the fact that the term corresponding to $\alpha=0$ vanishes due to the
divergence-free condition $\nabla\cdot u=0$.
We decompose $N$ into three parts:
\[
N = N_1 + N_2 + N_3,
\]
where $N_1$ corresponds to the case $m=1$ in \eqref{nonlinear terms},
\begin{align}\label{def:N1}
	N_1 := -2 \sum_{i,j=1}^2 \sum_{\alpha=1}^k \binom{k}{\alpha}
	\int_{\mathbb{R}^2}
	\partial_1^\alpha u_j\, \partial_j \partial_1^{k-\alpha} u_i\,
	\partial_1^k u_i \, dx, 
\end{align}
$N_2$ corresponds to the case $m=2$ and $j=1$,
\begin{align}\label{def:N2}
	N_2 := -2 \sum_{i=1}^2 \sum_{\alpha=1}^k \binom{k}{\alpha}
	\int_{\mathbb{R}^2}
	\partial_2^\alpha u_1\, \partial_1 \partial_2^{k-\alpha} u_i\,
	\partial_2^k u_i \, dx, 
\end{align}
and $N_3$ corresponds to the case $m=2$ and $j=2$,
\begin{align}\label{def:N3}
	N_3 :=& -2 \sum_{i=1}^2 \sum_{\alpha=1}^k \binom{k}{\alpha}
	\int_{\mathbb{R}^2}
	\partial_2^\alpha u_2\, \partial_2 \partial_2^{k-\alpha} u_i\,
	\partial_2^k u_i \, dx \nonumber\\
	=&\; 2 \sum_{i=1}^2 \sum_{\alpha=1}^k \binom{k}{\alpha}
	\int_{\mathbb{R}^2}
	\partial_1 \partial_2^{\alpha-1} u_1\,
	\partial_2 \partial_2^{k-\alpha} u_i\,
	\partial_2^k u_i \, dx, 
\end{align}
where, in the last step, we have used the incompressibility condition
$\partial_2 u_2 = -\partial_1 u_1$.

We now estimate $N_1$, $N_2$, and $N_3$ using Lemma~\ref{observ lem}.
Although these terms involve different derivative combinations, they all share
a common structure covered by Lemma~\ref{observ lem}.
Consequently, we obtain
\begin{align}\label{N_estimate}
	N = N_1 + N_2 + N_3
	\le C_0 \|u\|_{H^k}\, \|\Lambda_1^s u\|_{H^k}^2.
\end{align}
Inserting (\ref{N_estimate}) into (\ref{ddtpkmu}), and integrating over time, we obtain
\begin{align}
	&\sum_{m=1}^2\left\|\partial_m^k u\right\|_{L^2}^2 + 2 \nu \int_0^t \sum_{m=1}^2\left\|\Lambda_1^s \partial_m^k u\right\|_{L^2}^2 d\tau \nonumber\\
	\leq&  \sum_{m=1}^2\left\|\partial_m^k u_0\right\|_{L^2}^2 + C_0 \int_0^t \left\|\Lambda_1^s u(\tau)\right\|_{H^k}^2 \|u(\tau)\|_{H^k} d\tau. \label{uHk}
\end{align}
Combining \eqref{uHk} with the $L^2$ energy estimate \eqref{L2energy}, we deduce
\begin{align}
	\|u(t)\|_{H^k}^2
	+ 2\nu \int_0^t \|\Lambda_1^s u(\tau)\|_{H^k}^2 \, d\tau
	\le \|u_0\|_{H^k}^2
	+ C_0 \int_0^t \|\Lambda_1^s u(\tau)\|_{H^k}^2 \|u(\tau)\|_{H^k} \, d\tau.
	\label{uHk1}
\end{align}
If $\|u_0\|_{H^k} \le \varepsilon$ with $\varepsilon < \nu/C_0$, then a standard
bootstrap argument applied to \eqref{uHk1} yields
\[
\|u(t)\|_{H^k}^2
+ \nu \int_0^t \|\Lambda_1^s u(\tau)\|_{H^k}^2 \, d\tau
\le C \|u_0\|_{H^k}^2
\le C \varepsilon^2,
\qquad \text{for all } t>0.
\]
This completes the proof of Theorem~\ref{thm1}.
\end{proof}

\subsection{Proof of Lemma \ref{observ lem}}

\noindent This subsection is devoted to the proof of Lemma~\ref{observ lem}.

\begin{proof}[Proof of Lemma~\ref{observ lem}]
	The proof is divided into three cases according to the range of the fractional exponent $s$.
	
	\vspace{8pt}
	\noindent\textbullet\ \textit{Case I: $0\le s \le \tfrac{1}{4}$.}
	\vspace{8pt}
	
	\noindent For $\ell\in\{1,2\}$, we decompose the nonlinear term
	\[
	\sum_{\beta=1}^k \int_{\mathbb{R}^2}
	\partial_{\ell}^{\beta} f\, \partial_1 \partial_{\ell}^{k-\beta} f\, \partial_{\ell}^k f \, dx
	\]
	into three parts:
	\begin{align*}
		\sum_{\beta=1}^k \int_{\mathbb{R}^2}
		\partial_{\ell}^{\beta} f\, \partial_1 \partial_{\ell}^{k-\beta} f\, \partial_{\ell}^k f \, dx
		={}&
		\sum_{\beta=1}^{k-2} \int_{\mathbb{R}^2}
		\partial_{\ell}^{\beta} f\, \partial_1 \partial_{\ell}^{k-\beta} f\, \partial_{\ell}^k f \, dx \\
		&+ \int_{\mathbb{R}^2}
		\partial_{\ell}^{k-1} f\, \partial_1 \partial_{\ell} f\, \partial_{\ell}^k f \, dx
		+ \int_{\mathbb{R}^2}
		\partial_{\ell}^{k} f\, \partial_1 f\, \partial_{\ell}^k f \, dx \\
		=:{}& F_1^{I} + F_2^{I} + F_3^{I}.
	\end{align*}
It then follows from H\"older's inequality that
\begin{align*}
    F^{I}_1 &\le C \sum_{\beta=1}^{k-2} \| \partial^\beta_{\ell} f \|_{L^\frac{1}{2s}_{x_1} L_{x_2}^\infty} \| \partial_1 \partial^{k-\beta}_{\ell} f\|_{L^\frac{2}{1-2s}_{x_1} L_{x_2}^2} \|\partial^k_{\ell} f \|_{L^\frac{2}{1-2s}_{x_1} L_{x_2}^2}.
\end{align*}
By Sobolev embedding, 
\begin{align*}
    F^{I}_1 &\le C \sum_{\beta=1}^{k-2} \|\Lambda_1^{\frac{1}{2}-2s} \partial^\beta_{\ell} f \|_{L_{x_1}^2 H^1_{x_2}} \|\Lambda_1^s \partial_1 \partial^{k-\beta}_{\ell} f\|_{L^2} \|\Lambda_1^s \partial^k_{\ell} f \|_{L^2}.
\end{align*}
Since $1 \le \beta \le k-2$, we have
\begin{align*}
    F^{I}_1 &\le C \| f \|_{H^k} \|\Lambda_1^s  f\|_{H^k}^2.
\end{align*}
$F^{I}_2$ and $F^{I}_3$ can be estimated similarly,
\begin{align*}
    F^{I}_2 &\le C   \|\partial^{k-1}_{\ell} f\|_{L^\frac{2}{1-2s}_{x_1} L_{x_2}^\infty} \|\partial_1 \partial_{\ell} f\|_{L^\frac{1}{2s}_{x_1} L_{x_2}^2} \|\partial^k_{\ell} f \|_{L^\frac{2}{1-2s}_{x_1} L_{x_2}^2}\\
    &\le C  \|\Lambda_1^s\partial^{k-1}_{\ell} f\|_{L_{x_1}^2 H^1_{x_2}} \|\Lambda_1^{\frac{1}{2}-2s} \partial_1 \partial_{\ell} f\|_{L^2} \|\Lambda_1^s\partial^k_{\ell} f \|_{L^2}\\
    &\le C \| f \|_{H^k} \|\Lambda_1^s  f\|_{H^k}^2,
\end{align*}
and
\begin{align*}
    F^{I}_3 &\le C   \|\partial^{k}_{\ell} f\|_{L^\frac{2}{1-2s}_{x_1} L_{x_2}^2} \|\partial_1 f\|_{L^\frac{1}{2s}_{x_1} L_{x_2}^\infty} \|\partial^k_{\ell} f \|_{L^\frac{2}{1-2s}_{x_1} L_{x_2}^2}\\
    &\le C   \|\Lambda_1^s\partial^{k}_{\ell} f\|_{L^2} \|\Lambda_1^{\frac{1}{2}-2s} \partial_1  f\|_{L_{x_1}^2 H^1_{x_2}} \|\Lambda_1^s\partial^k_{\ell} f \|_{L^2}\\
    &\le C \| f \|_{H^k} \|\Lambda_1^s  f\|_{H^k}^2.
\end{align*}
In fact, when $0 \le s < \tfrac{1}{4}$, the horizontal derivative $\partial_1$ on the left-hand side of
\eqref{new nonliear} is not essential, and the energy inequality can be closed using only the
$\partial_1$-regularity provided by Sobolev embedding.
However, once $s \ge \tfrac{1}{4}$, the Sobolev embedding
\[
\dot{H}^{\frac{1}{2}-2s} \hookrightarrow L^{\frac{1}{2s}}
\]
no longer holds, and a different treatment is required.

\vspace{10pt}	\noindent\textbullet\ \textit{Case II:  $\frac{1}{4} \le s < \frac{1}{2}$}.\vspace{10pt}\\
For $\ell\in\{1,2\}$, we decompose
\[
\sum_{\beta=1}^k \int_{\mathbb{R}^2}
\partial_{\ell}^{\beta} f\, \partial_1 \partial_{\ell}^{k-\beta} f\, \partial_{\ell}^k f \, dx
\]
into two parts:
\begin{align*}
	\sum_{\beta=1}^k \int_{\mathbb{R}^2}
	\partial_{\ell}^{\beta} f\, \partial_1 \partial_{\ell}^{k-\beta} f\, \partial_{\ell}^k f \, dx
	={}&
	\sum_{\beta=1}^{k-1} \int_{\mathbb{R}^2}
	\partial_{\ell}^{\beta} f\, \partial_1 \partial_{\ell}^{k-\beta} f\, \partial_{\ell}^k f \, dx \\
	&+ \int_{\mathbb{R}^2}
	\partial_{\ell}^{k} f\, \partial_1 f\, \partial_{\ell}^k f \, dx \\
	=:{}& F_1^{II} + F_2^{II}.
\end{align*}
For the term $F_1^{II}$, by H\"older's inequality and Sobolev embedding,
\begin{align*}
    F^{II}_1 
    \le& C \sum_{\beta=1}^{k-1} \|\partial^\beta_{\ell} f\|_{L^\frac{2}{1-2s}_{x_1} L_{x_2}^\infty}
    \|\partial_1 \partial^{k-\beta}_{\ell} f\|_{L^\frac{1}{s}_{x_1} L_{x_2}^2} \|\partial^k_{\ell} f\|_{L^2} \\
    \le& C \sum_{\beta=1}^{k-1} \|\Lambda_1^s \partial^\beta_{\ell} f\|_{L_{x_1}^2 H^1_{x_2}}
    \|\Lambda_1^\frac{1-2s}{2}\partial_1 \partial^{k-\beta}_{\ell} f\|_{L^2} \|\partial^k_{\ell} f\|_{L^2}\\
    \le& C \sum_{\beta=1}^{k-1} \|\Lambda_1^s \partial^\beta_{\ell} f\|_{L_{x_1}^2 H^1_{x_2}}
    \|\Lambda^s_1 \Lambda_1^{1-\frac{4s-1}{2}} \partial^{k-\beta}_{\ell} f\|_{L^2} \|\partial^k_{\ell} f\|_{L^2}.
\end{align*}
By the ranges of $s$ and $\beta$, we obtain
\begin{align*}
	\|\Lambda_1^{s}\partial_{\ell}^{\beta} f\|_{L_{x_1}^2 H_{x_2}^1}
	\le \|\Lambda_1^{s} f\|_{H^k},
	\qquad
	\|\Lambda_1^{s}\Lambda_1^{1-\frac{4s-1}{2}}\partial_{\ell}^{k-\beta} f\|_{L^2}
	\le \|\Lambda_1^{s} f\|_{H^k}.
\end{align*}
Consequently, we deduce
\begin{align*}
	F_1^{II} \le C\,\|\Lambda_1^{s} f\|_{H^k}^2\,\|f\|_{H^k}.
\end{align*}
Similarly, the term $F_2^{II}$ can be estimated by applying H\"older's inequality and Sobolev embedding,
\begin{align*}
    F^{II}_2 \le& C \|\partial^k_{\ell} f\|_{L^\frac{2}{1-2s}_{x_1} L_{x_2}^2}
    \|\partial_1  f\|_{L^\frac{1}{s}_{x_1} L_{x_2}^\infty} \|\partial^k_{\ell} f\|_{L^2} \\
    \le& C \|\Lambda_1^s \partial^k_{\ell} f\|_{L^2}
    \|\Lambda_1^\frac{1-2s}{2} \partial_1 f\|_{L_{x_1}^2 H^1_{x_2}} \|\partial^k_{\ell} f\|_{L^2}\\
    \le& C \|\Lambda_1^s \partial^k_{\ell} f\|_{L^2}
    \|\Lambda_1^s \Lambda_1^{1-\frac{4s-1}{2}} f\|_{L_{x_1}^2 H^1_{x_2}} \|\partial^k_{\ell} f\|_{L^2}\\
    \le& C \|\Lambda_1^{s}  f\|_{H^k}^2 \| f\|_{H^k}.
\end{align*}
\vspace{10pt}	\noindent\textbullet\  \textit{Case III:  $\frac{1}{2} \le s \le \frac{3}{4}$}.\\
For $\ell \in \{1,2\}$, by taking Fourier transform with respect to $x_1$, we have
\begin{align*}
    \sum_{\beta=1}^k\int_{\mathbb{R}^2} \partial^{\beta}_{\ell} f \partial_1 \partial^{k-\beta}_{\ell} f \partial^k_{\ell} f dx 
    =& \frac{1}{2\pi} \sum_{\beta=1}^k\int_{\mathbb{R}^2} \mathcal{F}_{x_1}\big(\partial^{\beta}_{\ell} f \partial^k_{\ell} f\big)(\xi_1,x_2)
    \mathcal{F}_{x_1}\big(\partial_1 \partial^{k-\beta}_{\ell} f \big)(\xi_1,x_2) d\xi_1 dx_2.
\end{align*}
Then we introduce the following notations:
\begin{align*}
\mathcal{L}^h f := 
\mathcal{F}_{x_1}^{-1}\left(\chi(\xi_1) \mathcal{F}_{x_1} f\right),\quad
\mathcal{H}^h f := \mathcal{F}_{x_1}^{-1}\left(\varphi(\xi_1) \mathcal{F}_{x_1} f\right),
\end{align*}
where $(\chi,\varphi)$ are cutoff functions which are defined as 
\begin{align*}
    \chi(\xi_1) := \begin{cases} 
        1 & \text{if } |\xi_1| \le 1, \\
        0 & \text{if } |\xi_1| > 1,
    \end{cases} \quad \quad\quad
    \varphi(\xi_1) := \begin{cases} 
        0 & \text{if } |\xi_1| \le 1, \\
        1 & \text{if } |\xi_1| > 1.
    \end{cases}
\end{align*}
Then for each $\ell \in \{1,2\}$, the integral can be decomposed as 
\begin{align*}
    &\sum_{\beta=1}^k\int_{\mathbb{R}^2} \partial^{\beta}_{\ell} f \partial_1 \partial^{k-\beta}_{\ell} f \partial^k_{\ell} f dx\\ 
    =&  \frac{1}{2\pi} \sum_{\beta=1}^k\int_{\mathbb{R}^2} \chi(\xi_1)\mathcal{F}_{x_1}\big(\partial^{\beta}_{\ell} f \partial^k_{\ell} f\big)(\xi_1,x_2)
    \mathcal{F}_{x_1}\big(\partial_1 \partial^{k-\beta}_{\ell} f \big)(\xi_1,x_2) d\xi_1 dx_2\\
    &+
    \frac{1}{2\pi} \sum_{\beta=1}^k \int_{\mathbb{R}^2} \varphi(\xi_1)\mathcal{F}_{x_1}\big(\partial^{\beta}_{\ell} f \partial^k_{\ell} f\big)(\xi_1,x_2)
    \mathcal{F}_{x_1}\big(\partial_1 \partial^{k-\beta}_{\ell} f \big)(\xi_1,x_2) d\xi_1 dx_2\\
    =& F^{III}_L + F^{III}_H.
\end{align*}
\textbf{Estimate for the low-frequency part}:\vspace{10pt}\\
For $F^{III}_L$, by  H\"older inequality, we have
\begin{align}
    F^{III}_L \le& C \sum_{\beta=1}^k \big\||\xi_1|^{\frac{3}{2}-s} \chi(\xi_1) \mathcal{F}_{x_1} \big(\partial^{k-\beta}_{\ell} f\big)\big\|_{L_{\xi_1}^2 L_{x_2}^\infty} 
    \big\||\xi_1|^{s-\frac{1}{2}}  \mathcal{F}_{x_1} \big(\partial^\beta_{\ell} f  \partial^k_{\ell} f\big)\big\|_{L_{\xi_1}^2 L_{x_2}^1}.
\end{align}
By Plancherel's theorem in the $x_1$-direction, we can pass to the physical space:
\begin{align}
    F^{III}_L \le& C \sum_{\beta=1}^k \big\|\Lambda_1^s \mathcal{L}^h \big(\partial^{k-\beta}_{\ell} f\big)\big\|_{L_{x_1}^2 L_{x_2}^\infty} 
    \big\|\Lambda_1^{s-\frac{1}{2}}(\partial^\beta_{\ell} f \partial^k_{\ell} f)\big\|_{L_{x_1}^2 L_{x_2}^1}. \label{FLIII}
\end{align}
In the previous step, we used the inequality
\[
|\xi_1|^{\frac{3}{2}-s}\chi(\xi_1)
\le
C |\xi_1|^{s}\chi(\xi_1),
\]
which holds for $s\le \tfrac{3}{4}$. Applying Lemma~\ref{lem1}, we obtain
\begin{align}
	\big\|\Lambda_1^{s-\frac{1}{2}}
	(\partial_{\ell}^{\beta} f\, \partial_{\ell}^k f)\big\|_{L_{x_1}^2 L_{x_2}^1}
	\le{}
	C \big\|(\Lambda_1^{s-\frac{1}{2}} \partial_{\ell}^{\beta} f)\, (\partial_{\ell}^k f)\big\|_{L_{x_1}^2 L_{x_2}^1}
	+ C \big\|(\partial_{\ell}^{\beta} f)\, (\Lambda_1^{s-\frac{1}{2}} \partial_{\ell}^k f)\big\|_{L_{x_1}^2 L_{x_2}^1}.
	\label{FLIIIparttwo}
\end{align}
Substituting \eqref{FLIIIparttwo} into \eqref{FLIII}, we deduce
\begin{align}
	F^{III}_L \le{}&
	C \sum_{\beta=1}^k
	\big\|\Lambda_1^s \mathcal{L}^h (\partial_{\ell}^{k-\beta} f)\big\|_{L_{x_1}^2 L_{x_2}^\infty}
	\big\|(\Lambda_1^{s-\frac{1}{2}}\partial_{\ell}^{\beta} f)\, (\partial_{\ell}^k f)\big\|_{L_{x_1}^2 L_{x_2}^1}
	\nonumber\\
	&+ C \sum_{\beta=1}^k
	\big\|\Lambda_1^s \mathcal{L}^h (\partial_{\ell}^{k-\beta} f)\big\|_{L_{x_1}^2 L_{x_2}^\infty}
	\big\|(\partial_{\ell}^{\beta} f)\, (\Lambda_1^{s-\frac{1}{2}}\partial_{\ell}^k f)\big\|_{L_{x_1}^2 L_{x_2}^1}.
	\label{I_LPlanch}
\end{align}
By H\"older's inequality and Sobolev embedding,
\begin{align}
    \|(\Lambda_1^{s-\frac{1}{2}}\partial^\beta_{\ell} f)~  (\partial^k_{\ell} f)\|_{L_{x_1}^2 L_{x_2}^1} 
    \le& \|\Lambda_1^{s-\frac{1}{2}}\partial^\beta_{\ell} f\|_{L_{x_1}^4 L_{x_2}^2} 
    \|\partial^k_{\ell} f\|_{L_{x_1}^4 L_{x_2}^2} \nonumber \\
    \le& \|\Lambda_1^{s-\frac{1}{4}}\partial^\beta_{\ell} f\|_{L^2} \|\Lambda_1^\frac{1}{4}\partial^k_{\ell} f\|_{L^2}, \label{HS1}
\end{align}
and 
\begin{align}
    \|(\partial^\beta_{\ell} f) ~  (\Lambda_1^{s-\frac{1}{2}}\partial^k_{\ell} f)\|_{L_{x_1}^2 L_{x_2}^1} 
    \le & \|\partial^\beta_{\ell} f\|_{L_{x_1}^4 L_{x_2}^2} 
    \|\Lambda_1^{s-\frac{1}{2}}\partial^k_{\ell} f\|_{L_{x_1}^4 L_{x_2}^2} \nonumber \\
    \le & \|\Lambda_1^\frac{1}{4} \partial^\beta_{\ell} f\|_{L^2} 
    \|\Lambda_1^{s-\frac{1}{4}}\partial^k_{\ell} f\|_{L^2}.  \label{HS2}
\end{align}
By interpolation, we have
\begin{align}\label{inter1}
    \|\Lambda_1^{s-\frac{1}{4}} \partial^\beta_{\ell} f\|_{L^2} \|\Lambda_1^\frac{1}{4} \partial^k_{\ell} f\|_{L^2} 
    \le 
    \|\partial^\beta_{\ell} f\|_{L^2}^{\frac{1}{4s}} 
    \|\Lambda_1^s \partial^\beta_{\ell} f\|_{L^2}^{1-\frac{1}{4s}} 
    \|\partial^k_{\ell} f\|_{L^2}^{1-\frac{1}{4s}} 
    \|\Lambda_1^s \partial^k_{\ell} f\|_{L^2}^{\frac{1}{4s}},
\end{align}
and
\begin{align}\label{inter2}
    \|\Lambda_1^\frac{1}{4} \partial^\beta_{\ell} f\|_{L^2} \|\Lambda_1^{s-\frac{1}{4}}\partial^k_{\ell} f\|_{L^2} 
    &\le 
    \|\partial^\beta_{\ell} f\|_{L^2}^{1 - \frac{1}{4s}} 
    \|\Lambda_1^s \partial^\beta_{\ell} f\|_{L^2}^{\frac{1}{4s}} 
    \|\partial^k_{\ell} f\|_{L^2}^{\frac{1}{4s}} 
    \|\Lambda_1^s \partial^k_{\ell} f\|_{L^2}^{1 - \frac{1}{4s}}. 
\end{align}
Then substituting (\ref{HS1}), (\ref{HS2}), (\ref{inter1}) and (\ref{inter2}) into (\ref{I_LPlanch}), using Sobolev embedding $H^1_{x_2} \hookrightarrow L_{x_2}^\infty$, we have
\begin{align*}
    F^{III}_L  \le& C \sum_{\beta=1}^k  \|\Lambda_1^s \partial^{k-\beta}_{\ell} f\|_{L_{x_1}^2 H^1_{x_2}} \|\partial^\beta_{\ell} f\|_{L^2}^{\frac{1}{4s}} 
    \|\Lambda_1^s \partial^\beta_{\ell} f\|_{L^2}^{1-\frac{1}{4s}} 
    \|\partial^k_{\ell} f\|_{L^2}^{1-\frac{1}{4s}} 
    \|\Lambda_1^s \partial^k_{\ell} f\|_{L^2}^{\frac{1}{4s}} \\
    &+ C \sum_{\beta=1}^k \|\Lambda_1^s \partial^{k-\beta}_{\ell} f\|_{L_{x_1}^2 H^1_{x_2}}
    \|\partial^\beta_{\ell} f\|_{L^2}^{1 - \frac{1}{4s}} 
    \|\Lambda_1^s \partial^\beta_{\ell} f\|_{L^2}^{\frac{1}{4s}} 
    \|\partial^k_{\ell} f\|_{L^2}^{\frac{1}{4s}} 
    \|\Lambda_1^s \partial^k_{\ell} f\|_{L^2}^{1 - \frac{1}{4s}} .
\end{align*}
Since $1 \le \beta \le k$, we have
\begin{align*}
    F^{III}_L & \le C  \|\Lambda_1^{s}  f\|_{H^k}^2 \| f\|_{H^k}.
\end{align*}
\textbf{Estimate for the high-frequency part}:\vspace{10pt}\\
We decompose the analysis of $F^{III}_H$ into two parts according to the cases $1\le \beta \le k-1$ and $\beta=k$.
\begin{align*}
    F^{III}_H
    =&\sum_{\beta=1}^{k-1} \int_{\mathbb{R}^2} \varphi(\xi_1)\mathcal{F}_{x_1}\big(\partial^{\beta}_{\ell} f \partial^k_{\ell} f\big)(\xi_1,x_2)
    \mathcal{F}_{x_1}\big(\partial_1 \partial^{k-\beta}_{\ell} f \big)(\xi_1,x_2) d\xi_1 dx_2\\
    &+  \int_{\mathbb{R}^2} \varphi(\xi_1)\mathcal{F}_{x_1}\big(\partial^{k}_{\ell} f \partial^k_{\ell} f\big)(\xi_1,x_2)
    \mathcal{F}_{x_1}\big(\partial_1  f \big)(\xi_1,x_2) d\xi_1 dx_2\\
    :=& F^{III}_{H1} + F^{III}_{H2}.
\end{align*}
For the term $F^{III}_{H1}$, using H\"older's inequality together with the properties of the cutoff function $\varphi$, and noting that $s>\tfrac12$, we obtain
\begin{align*}
    F^{III}_{H1} \le& C \sum_{\beta=1}^{k-1} \big\|\varphi(\xi_1)\mathcal{F}_{x_1}\big(\partial^\beta_{\ell} f \partial^k_{\ell} f\big) \big\|_{L^2_{\xi_1} L_{x_2}^2}
    \big\|\mathcal{F}_{x_1} (\partial_1 \partial^{k-\beta}_{\ell} f)\|_{L^2_{\xi_1} L_{x_2}^2}\\
    \le& C \sum_{\beta=1}^{k-1} \big\||\xi_1|^{s-\frac{1}{2}}\varphi(\xi_1)\mathcal{F}_{x_1}\big(\partial^\beta_{\ell} f \partial^k_{\ell} f\big) \big\|_{L^2_{\xi_1} L_{x_2}^2}
    \big\|\mathcal{F}_{x_1} (\partial_1 \partial^{k-\beta}_{\ell} f)\|_{L^2_{\xi_1} L_{x_2}^2}.
\end{align*}
By Plancherel's theorem and Lemma \ref{lem2},
\begin{align*}
    \big\||\xi_1|^{s-\frac{1}{2}}\varphi(\xi_1)\mathcal{F}_{x_1}\big(\partial^\beta_{\ell} f \partial^k_{\ell} f\big) \big\|_{L^2_{\xi_1} L_{x_2}^2} 
    \le& C\Big\|\big\|\Lambda_1^{s-\frac{1}{2}}\big(\partial^\beta_{\ell} f \partial^k_{\ell} f\big)\big\|_{L_{x_1}^2}\Big\|_{L_{x_2}^2}\\
    \le& C\Big\|\big\|\Lambda_1^{\frac{s}{2}}\partial^\beta_{\ell} f\big\|_{L_{x_1}^2} \|\Lambda_1^\frac{s}{2}\partial^k_{\ell} f\big\|_{L_{x_1}^2}\Big\|_{L_{x_2}^2}\\
    \le& C\big\|\Lambda_1^{\frac{s}{2}}\partial^\beta_{\ell} f\big\|_{L_{x_1}^2L_{x_2}^\infty}
    \|\Lambda_1^\frac{s}{2}\partial^k_{\ell} f\big\|_{L^2}.
\end{align*}
By interpolation and Sobolev embedding, we have
\begin{align*}
    F^{III}_{H1} \le & C \sum_{\beta=1}^{k-1} \|\partial^\beta_{\ell} f\|_{L_{x_1}^2 H^1_{x_2}}^\frac{1}{2} 
    \|\Lambda_1^s \partial^\beta_{\ell} f\|_{L_{x_1}^2 H^1_{x_2}}^\frac{1}{2}
    \|\partial^k_{\ell} f\|_{L^2}^\frac{1}{2} 
    \|\Lambda_1^s \partial^k_{\ell} f\|_{L^2}^\frac{1}{2}
    \big\|\Lambda_1^s \Lambda_1^{1-s} \partial^{k-\beta}_{\ell} f\|_{L^2}.
\end{align*}
Since $1\le \beta \le k-1$, we have
\begin{align*}
    F^{III}_{H1} \le  C \|f\|_{H^k}\|\Lambda_1^s  f\|_{H^k}^2.
\end{align*}
Similarly, by H\"older's inequality and Lemma \ref{lem2}, we obtain
\begin{align*}
    F^{III}_{H2} \le C \|\Lambda_1^\frac{s}{2} \partial^k_{\ell} f\|_{L^2}
    \|\Lambda_1^\frac{s}{2} \partial^k_{\ell} f\|_{L^2}
    \|\partial_1 f\|_{L_{x_1}^2 L_{x_2}^\infty}.
\end{align*}
Then, by interpolation and Sobolev embedding, we obtain
\begin{align*}
    F^{III}_{H2} \le C \|\partial^k_{\ell} f\|_{L^2} \|\Lambda_1^s \partial^k_{\ell} f\|_{L^2} 
    \|\Lambda_1^s \Lambda_1^{1-s} f\|_{L_{x_1}^2 H^1_{x_2}} \le C \|f\|_{H^k}\|\Lambda_1^s  f\|_{H^k}^2 .
\end{align*}
This completes the proof of Lemma \ref{observ lem}.
\end{proof}

\vskip .1in 
\section{Proof of Theorem \ref{thm3}}
\label{sec:proof 2}

\noindent In this section, we prove Theorem~\ref{thm3}.
The argument couples two ingredients: the $H^k$ stability estimate for $u$ and
the large-time decay estimates for $u$ and its derivatives.

More precisely, by imposing a mild negative Sobolev regularity assumption on the
initial data in the horizontal direction, we are able to derive algebraic decay rates
for $u$ and $\partial_1 u$ in $L^2$.
At the same time, we also prove the propagation of the negative horizontal regularity
$\|\Lambda_1^{-\sigma}u\|_{L^2}$.
These decay rates, in turn, render the time integrals of the nonlinear terms
arising in the high-order energy estimate convergent, which is the key difficulty
when $s>\frac{3}{4}$.
Indeed, for $s\le\frac{3}{4}$ the stability of~\eqref{NS} can be established by
energy estimates alone (Theorem~\ref{thm1}), whereas for $s>\frac{3}{4}$ the
dissipation $\Lambda_1^{2s}$ is no longer strong enough to directly absorb certain
nonlinear terms in the $H^k$ estimate.
The decay of $\|u_1\|_{L^2}$, $\|\partial_1 u_1\|_{L^2}$, $\|\partial_2 u_1\|_{L^2}$,
$\|u_2\|_{L^2}$, and $\|\partial_1 u_2\|_{L^2}$ is established via Duhamel's principle,
where a key structural observation is that $u_2$ decays strictly faster than $u_1$
owing to the incompressibility constraint and the stream-function representation.
The two estimates are then closed simultaneously through a standard bootstrap argument.

\medskip
\noindent\textbf{Bootstrap assumptions.}
We assume that there exists a constant $C_0>0$ such that, for all $t\in[0,T)$,
the solution satisfies the following uniform bounds:
\begin{align}
	\| u(t)\|_{H^{k}} &\le C_0\varepsilon, \label{est:uhk}\\
	\|u_1(t)\|_{L^2} &\le C_0\varepsilon(1+t)^{-\sigma/2s}, \label{est:u1} \\
	\|u_2(t)\|_{L^2} &\le C_0\varepsilon(1+t)^{-(\sigma+\frac{2}{3})/2s}, \label{est:u2}\\
	\|\partial_2 u_1(t)\|_{L^2} &\le C_0\varepsilon(1+t)^{-\sigma/2s}, \label{est:p2u1}\\
	\|\partial_1 u_1(t)\|_{L^2} &\le C_0\varepsilon(1+t)^{-(\sigma+1)/2s}, \label{est:p1u1} \\
	\|\partial_1 u_2(t)\|_{L^2} &\le C_0\varepsilon(1+t)^{-2\sigma/s}. \label{est:p1u2}
\end{align}
\noindent\textbf{Improved estimates.}
Based on the initial condition~\eqref{ini_condi} and the above \emph{a priori} bounds,
we will establish the improved estimates
\begin{align}
	\| u(t)\|_{H^k} &\le \tfrac12 C_0\varepsilon, \label{estt0:un}\\
	\|u_1(t)\|_{L^2} &\le \tfrac12 C_0\varepsilon(1+t)^{-\sigma/2s}, \label{estt0:u1}\\
	\|u_2(t)\|_{L^2} &\le \tfrac12 C_0\varepsilon(1+t)^{-(\sigma+\frac{2}{3})/2s}, \label{estt0:u2}\\
	\|\partial_2 u_1(t)\|_{L^2} &\le \tfrac12 C_0\varepsilon(1+t)^{-\sigma/2s}, \label{estt0:p2u1}\\
	\|\partial_1 u_1(t)\|_{L^2} &\le \tfrac12 C_0\varepsilon(1+t)^{-(\sigma+1)/2s}, \label{estt0:p1u1} \\
	\|\partial_1 u_2(t)\|_{L^2} &\le \tfrac12 C_0\varepsilon(1+t)^{-2\sigma/s}. \label{estt0:p1u2}
\end{align}
Once \eqref{estt0:un}--\eqref{estt0:p1u2} are established, a standard bootstrap argument
implies that the maximal existence time satisfies $T=+\infty$,
and hence all decay estimates hold globally in time.

\subsection{Estimate of $\|u\|_{H^k}$}
\label{sec:zhejie2}
In this subsection, under the assumption that $u$ and $\partial_1u$ enjoy the prescribed time decay,
we prove that the solution $u$ satisfies a closed high-order energy estimate in the range
$\frac34<s<\frac12+\sigma$. The main difficulty is the treatment of the nonlinear
interactions. By exploiting the decay of $u$ and $\partial_1u$, we show that these nonlinear terms
can be absorbed into the energy inequality. As a consequence, under the additional assumption that the
initial data are sufficiently small, we complete the bootstrap argument for $\|u\|_{H^k}$.

We first record the core nonlinear estimate used in the proof.
\begin{lem}\label{lem:Hk-core}
Assume that $\frac34<s<\frac12+\sigma$, and that $u$ and $\partial_1u$ satisfy the decay bounds
postulated in the bootstrap assumptions. Then there exist constants $\vartheta>2$ and $\Omega>1$
such that
\begin{align*}
\sum_{\beta=1}^k
\int_{\mathbb{R}^2}
\partial_{2}^{\beta} u\,
\partial_1 \partial_{2}^{k-\beta} u\,
\partial_{2}^k u\, dx
\le C(C_0\varepsilon)^{\vartheta}(1+t)^{-\Omega}
+ C\|\partial_2^k u\|_{L^2}\,\|\Lambda_1^s\partial_2^k u\|_{L^2}^{2}.
\end{align*}
\end{lem}

The proof of this lemma will be given at the end of this subsection.

\medskip

The $L^2$ energy estimate for $u$ is standard:
\begin{align}\label{eq:l2energy-u-int}
\|u(t)\|_{L^2}^2+2\nu\int_0^t\|\Lambda_1^s u(\tau)\|_{L^2}^2\,d\tau
\le \|u_0\|_{L^2}^2.
\end{align}
For the $\dot{H}^k$ norm, using the same notations as \eqref{ddtpkmu}--\eqref{def:N3}), we have 
\begin{align}\label{ddtpkmu1}
\frac{d}{dt}\sum_{m=1}^2\|\partial_m^k u\|_{L^2}^2
+2\nu\sum_{m=1}^2\|\Lambda_1^s\partial_m^k u\|_{L^2}^2
= N = N_{1}+N_{2}+N_{3},
\end{align}
For $N_{1}$, since there are enough $\partial_1$ derivatives, the estimate is direct.
Assume $1\le \alpha\le k-1$. By H\"older's inequality and Lemma~\ref{lem2}, we obtain
\begin{align*}
|N_{1}|
&\le C\sum_{i,j=1}^2 \sum_{\alpha=1}^{k-1} \binom{k}{\alpha}
\|\partial_1^\alpha u_j\|_{L^2_{x_1}L^\infty_{x_2}}
\|\partial_j \partial_1^{k-\alpha} u_i\|_{L^4_{x_1}L^2_{x_2}}
\|\partial_1^k u_i\|_{L^4_{x_1}L^2_{x_2}}.
\end{align*}
By Sobolev embedding in the $x_1$ variable,
\begin{align*}
\|\partial_j \partial_1^{k-\alpha} u_i\|_{L^4_{x_1}L^2_{x_2}}
\|\partial_1^k u_i\|_{L^4_{x_1}L^2_{x_2}}
&\le
\|\Lambda_1^{\frac14}\partial_j \partial_1^{k-\alpha} u_i\|_{L^2}
\|\Lambda_1^{\frac14}\partial_1^k u_i\|_{L^2}\\
&\le
\|\Lambda_1^{s}\Lambda_1^{\frac14-s}\partial_j \partial_1^{k-\alpha} u_i\|_{L^2}
\|\Lambda_1^{s}\Lambda_1^{\frac14-s}\partial_1^k u_i\|_{L^2}.
\end{align*}
Substituting this into the above inequality yields
\begin{align*}
|N_{1}|
&\le C\sum_{i,j=1}^2 \sum_{\alpha=1}^{k-1} \binom{k}{\alpha}
\|\partial_1^\alpha u_j\|_{L^2_{x_1}L^\infty_{x_2}}
\|\Lambda_1^{s}\Lambda_1^{\frac14-s}\partial_j \partial_1^{k-\alpha} u_i\|_{L^2}
\|\Lambda_1^{s}\Lambda_1^{\frac14-s}\partial_1^k u_i\|_{L^2}\\
&\le C\|u\|_{H^k}\|\Lambda_1^s u\|_{H^k}^2.
\end{align*}
For the endpoint case $\alpha=k$, we similarly have
\begin{align*}
\sum_{i,j=1}^2\left|\int_{\mathbb{R}^2}
\big(\partial_1^k u_j\,\partial_j u_i\big)\,\partial_1^k u_i\,dx\right|
\le C\|u\|_{H^k}\|\Lambda_1^s u\|_{H^k}^2,
\end{align*}
so the $\alpha=k$ contribution is controlled in the same way.

The new difficulty, compared with Theorem~\ref{thm1}, lies in $N_{2}$ and $N_{3}$:
these terms involve $\partial_1$ acting on only one factor, so they cannot be
controlled by the dissipation $\Lambda_1^{2s}$ alone when $s>\frac34$.
We handle them using the bootstrap decay bounds~\eqref{est:u1} and~\eqref{est:p1u1}.

Applying Lemma~\ref{lem:Hk-core} to $N_2$, and using the identity
$\partial_2u_2=-\partial_1u_1$, we obtain the same bound for $N_3$. Consequently,
\begin{align*}
|N_2|+|N_3|
\le C(C_0\varepsilon)^{\vartheta}(1+t)^{-\Omega}
+ C\|\partial_2^k u\|_{L^2}\,\|\Lambda_1^s\partial_2^k u\|_{L^2}^{2},
\end{align*}
where $\vartheta>2$ and $\Omega>1$ are given by Lemma~\ref{lem:Hk-core}.
Since $\|\partial_2^k u\|_{L^2}\le \|u\|_{H^k}$ and
$\|\Lambda_1^s\partial_2^k u\|_{L^2}\le \|\Lambda_1^s u\|_{H^k}$, we infer that
\begin{align}\label{eq:M1-bound-Hk}
|N|\le C(C_0\varepsilon)^{\vartheta}(1+t)^{-\Omega}
 + C\|u\|_{H^k}\|\Lambda_1^s u\|_{H^k}^2.
\end{align}
Substituting \eqref{eq:M1-bound-Hk} into \eqref{ddtpkmu1} and integrating with respect to time, we get
\begin{align}\label{M1_energy}
&\sum_{m=1}^2\|\partial_m^k u(t)\|_{L^2}^2
+2\nu\int_0^t\sum_{m=1}^2\|\Lambda_1^s\partial_m^k u(\tau)\|_{L^2}^2\,d\tau\nonumber\\
\le{}&\sum_{m=1}^2\|\partial_m^k u_0\|_{L^2}^2
+C(C_0\varepsilon)^{\vartheta}
+C\int_0^t\|\Lambda_1^s u(\tau)\|_{H^k}^2\|u(\tau)\|_{H^k}\,d\tau.
\end{align}
Combining \eqref{eq:l2energy-u-int} and \eqref{M1_energy}, we conclude
\begin{align}\label{usigmaHk}
\|u(t)\|_{H^k}^2
+\nu\int_0^t\|\Lambda_1^s u(\tau)\|_{H^k}^2\,d\tau
\le C\|u_0\|_{H^k}^2
+C(C_0\varepsilon)^{\vartheta}
+C\int_0^t\|\Lambda_1^s u(\tau)\|_{H^k}^2\|u(\tau)\|_{H^k}\,d\tau.
\end{align}
If $\|u_0\|_{H^k}\le \varepsilon$ with $\varepsilon$ sufficiently small, then a standard bootstrap argument yields
\[
\|u(t)\|_{H^k}^2+\nu\int_0^t\|\Lambda_1^s u(\tau)\|_{H^k}^2\,d\tau
\le C\varepsilon^2.
\]

\begin{proof}[Proof of Lemma~\ref{lem:Hk-core}]
We again split the estimate into the cases $1\le \beta\le k-1$ and $\beta=k$, namely
\begin{align*}
\mathcal{N}_{\rm low}
&:= \sum_{\beta=1}^{k-1}
\int_{\mathbb{R}^2}
\partial_2^\beta u\,\partial_1\partial_2^{k-\beta}u\,\partial_2^k u\,dx,\\
\mathcal{N}_{\rm top}
&:= \int_{\mathbb{R}^2}
\partial_2^k u\,\partial_1u\,\partial_2^k u\,dx.
\end{align*}
For the lower-order part $1\le \beta\le k-1$, by H\"older's inequality,
\begin{align*}
|\mathcal{N}_{\rm low}|
\le C\sum_{\beta=1}^{k-1} 
\|\Lambda_1^s\partial_2^{k-\beta}u\|_{L^2}
\|\Lambda_1^{1-s}\big((\partial_2^\beta u)(\partial_2^k u)\big)\|_{L^2}.
\end{align*}
For the first factor, interpolation in the $x_2$ direction gives
\begin{align*}
\|\Lambda_1^s\partial_2^{k-\beta}u\|_{L^2}
\le \|\Lambda_1^s u\|_{L^2}^{\frac{\beta}{k}}
\|\Lambda_1^s\partial_2^k u\|_{L^2}^{1-\frac{\beta}{k}}.
\end{align*}
Using interpolation in the $x_1$ direction,
\begin{align*}
\|\Lambda_1^s u\|_{L^2}
\le \|u\|_{L^2}^{1-s}\|\partial_1u\|_{L^2}^{s},
\end{align*}
hence
\vspace{-0.2cm}
\begin{align*}
\|\Lambda_1^s\partial_2^{k-\beta}u\|_{L^2}
\le
\Big(\|u\|_{L^2}^{1-s}\|\partial_1u\|_{L^2}^{s}\Big)^{\frac{\beta}{k}}
\|\Lambda_1^s\partial_2^k u\|_{L^2}^{1-\frac{\beta}{k}}.
\end{align*}
For the second factor, by Lemma~\ref{lem2},
\begin{align*}
\|\Lambda_1^{1-s}[(\partial_2^\beta u)(\partial_2^k u)]\|_{L^2}
\le
\|\Lambda_1^{\frac34-\frac{s}{2}}\partial_2^\beta u\|_{L^2_{x_1}L^\infty_{x_2}}
\|\Lambda_1^{\frac34-\frac{s}{2}}\partial_2^k u\|_{L^2}.
\end{align*}
By Gagliardo--Nirenberg inequality,
\begin{align*}
\|\Lambda_1^{\frac34-\frac{s}{2}}\partial_2^\beta u\|_{L^2_{x_1}L^\infty_{x_2}}
\le
\|\Lambda_1^{\frac34-\frac{s}{2}}\partial_2^\beta u\|_{L^2}^{\frac12}
\|\Lambda_1^{\frac34-\frac{s}{2}}\partial_2^{\beta+1} u\|_{L^2}^{\frac12}.
\end{align*}
Interpolating the $\partial_2$ derivatives between order $0$ and $k$, we obtain
\begin{align*}
\|\Lambda_1^{\frac34-\frac{s}{2}}\partial_2^\beta u\|_{L^2}
&\le
\|\Lambda_1^{\frac34-\frac{s}{2}}u\|_{L^2}^{1-\frac{\beta}{k}}
\|\Lambda_1^{\frac34-\frac{s}{2}}\partial_2^k u\|_{L^2}^{\frac{\beta}{k}},\\
\|\Lambda_1^{\frac34-\frac{s}{2}}\partial_2^{\beta+1} u\|_{L^2}
&\le
\|\Lambda_1^{\frac34-\frac{s}{2}}u\|_{L^2}^{1-\frac{\beta+1}{k}}
\|\Lambda_1^{\frac34-\frac{s}{2}}\partial_2^k u\|_{L^2}^{\frac{\beta+1}{k}}.
\end{align*}
Therefore,
\vspace{-0.2cm}
\begin{align*}
\|\Lambda_1^{\frac34-\frac{s}{2}}\partial_2^\beta u\|_{L^2_{x_1}L^\infty_{x_2}}
\le
\|\Lambda_1^{\frac34-\frac{s}{2}}u\|_{L^2}^{1-\frac{2\beta+1}{2k}}
\|\Lambda_1^{\frac34-\frac{s}{2}}\partial_2^k u\|_{L^2}^{\frac{2\beta+1}{2k}}.
\end{align*}
Since $\frac34<s<\frac{11}{12}$, we have
\(\frac{7}{24}<\frac34-\frac{s}{2}<\frac{3}{8}<1\), and thus
\begin{align*}
\|\Lambda_1^{\frac34-\frac{s}{2}}u\|_{L^2}
\le
\|u\|_{L^2}^{1-(\frac34-\frac{s}{2})}
\|\partial_1u\|_{L^2}^{\frac34-\frac{s}{2}}.
\end{align*}
Also,
\vspace{-0.2cm}
\begin{align*}
\|\Lambda_1^{\frac34-\frac{s}{2}}\partial_2^k u\|_{L^2}
\le
\|\partial_2^k u\|_{L^2}^{1-\theta_1}
\|\Lambda_1^s\partial_2^k u\|_{L^2}^{\theta_1},
\qquad
\theta_1=\frac{3-2s}{4s}.
\end{align*}
\noindent
Collecting the above estimates, we obtain
$$
|\mathcal{N}_{\rm low}|\le C \| u \|_{L^2}^{\gamma_1}  \| \partial_1 u \|_{L^2}^{\gamma_2}  \| \partial_2^k u \|_{L^2}^{\gamma_3}  \| \Lambda_1^s \partial_2^k u \|_{L^2}^{\gamma_4},
$$
where
$$\gamma_1 = \frac{\beta(1-s)}{k} + \left( \frac{1}{4} + \frac{s}{2} \right) \left( 1 - \frac{2\beta+1}{2k} \right),$$
$$\gamma_2 = \frac{\beta s}{k} + \left( \frac{3}{4} - \frac{s}{2} \right) \left( 1 - \frac{2\beta+1}{2k} \right),$$
$$\gamma_3  = \frac{6s-3}{4s} \left( 1 + \frac{2\beta+1}{2k} \right),$$
$$\gamma_4 = 1 - \frac{\beta}{k} + \frac{3-2s}{4s} \left( 1 + \frac{2\beta+1}{2k} \right).$$
By Young's inequality, we further get
\begin{align*}
|\mathcal{N}_{\rm low}|
\le{}& C
\bigg[\Big(\| u \|_{L^2}^{\gamma_1}
\| \partial_1 u \|_{L^2}^{\gamma_2}
\| \partial_2^k u \|_{L^2}^{\gamma_3-\gamma_4/2}\Big)^{\frac{2}{2-\gamma_4}}
+ C\|\partial_2^k u\|_{L^2}\,\|\Lambda_1^s\partial_2^k u\|_{L^2}^{2}\bigg].
\end{align*}
Combining the decay rates \eqref{est:uhk}, \eqref{est:u1} and \eqref{est:p1u1}, 
we obtain
\begin{align*}
|\mathcal{N}_{\rm low}|
\le C(C_0\varepsilon)^{\frac{2(\gamma_1+\gamma_2+\gamma_3-\gamma_4/2)}{2-\gamma_4}}(1+t)^{-\Omega_1}
+ C\|\partial_2^k u\|_{L^2}\,\|\Lambda_1^s\partial_2^k u\|_{L^2}^{2},
\end{align*}
where
$$
\Omega_1 = \frac{\gamma_1 \sigma + \gamma_2 (\sigma + 1)}{s(2-\gamma_4)}.
$$
By direct computation, we get
$$
1 - \Omega_1 = \frac{4k(4s - 3) - 4\sigma(2k - 1)}{12ks + 12\beta s - 6k - 6\beta + 2s - 3}.
$$
When $k>2$, the denominator is positive. Therefore, $1-\Omega_1<0$ iff the numerator is negative, namely
$$
4k(4s - 3) - 4\sigma(2k - 1)<0,
$$
which is equivalent to
$$
s<\frac{3}{4}+\frac{\sigma}{2}-\frac{\sigma}{4k}.
$$
Since
$$
k>\frac{\sigma}{1-2\sigma},
$$
and $s<\frac{1}{2}+\sigma$, we obtain
$$
s<\frac{1}{2}+\sigma<\frac{3}{4}+\frac{\sigma}{2}-\frac{\sigma}{4k}.
$$
Hence $\Omega_1>1$.

For the endpoint part $\beta=k$, denote it by $\mathcal{N}_{\rm top}$. By H\"older's inequality and Sobolev embedding,
\begin{align*}
|\mathcal{N}_{\rm top}|
&= \left|\int_{\mathbb{R}^2}
\big(\partial_2^k u\, \partial_1 u\big)\, \partial_2^k u\, dx\right| \\
&\le C\|\partial_1 u\|_{L^2_{x_1}L^\infty_{x_2}}
\|\partial_2^k u\|_{L^4_{x_1}L^2_{x_2}}^2 \\
&\le C\|\partial_1 u\|_{L^2}^{\frac12}
\|\partial_1\partial_2 u\|_{L^2}^{\frac12}
\|\Lambda_1^{\frac14}\partial_2^k u\|_{L^2}^2.
\end{align*}
Using interpolation,
\begin{align*}
\|\partial_1\partial_2 u\|_{L^2}
&\le \|\partial_1 u\|_{L^2}^{\frac{k-1}{k}}
\|\partial_1\partial_2^k u\|_{L^2}^{\frac{1}{k}},\\
\|\Lambda_1^{\frac14}\partial_2^k u\|_{L^2}^2
&\le C\|\partial_2^k u\|_{L^2}^{\frac32}
\|\partial_1\partial_2^k u\|_{L^2}^{\frac12}.
\end{align*}
Hence,
\begin{align*}
|\mathcal{N}_{\rm top}|
\le C\|\partial_1 u\|_{L^2}^{\frac12}
\|\partial_1 u\|_{L^2}^{\frac{k-1}{2k}}
\|\partial_1\partial_2^k u\|_{L^2}^{\frac{1}{2k}}
\|\partial_2^k u\|_{L^2}^{\frac32}
\|\partial_1\partial_2^k u\|_{L^2}^{\frac12}.
\end{align*}
Applying Young's inequality, we obtain
\begin{align*}
|\mathcal{N}_{\rm top}| \le{}& C \bigg[ \left( \|\partial_1 u\|_{L^2}^{\frac{2k-1}{2k}} \|\partial_2^k u\|_{L^2}^{\frac{5k-1}{4k}} \right)^{\frac{4k}{3k-1}} + C \|\partial_2^k u\|_{L^2} \|\partial_1 \partial_2^k u\|_{L^2}^2 \bigg].
\end{align*}
Combining the decay and boundedness estimates \eqref{est:uhk}, \eqref{est:u1}, and \eqref{est:p1u1}, we get
$$
|\mathcal{N}_{\rm top}| \le (C C_0\varepsilon)^{\frac{9k-3}{3k-1}}  (1+t)^{-\Omega_2 } + C\|\partial_2^k u\|_{L^2} \|\partial_1 \partial_2^k u\|_{L^2}^2,
$$
where
$$\Omega_2 = \frac{\sigma+1}{2s} \times \frac{4k-2}{3k-1}.$$
By $k > \frac{1}{1-2\sigma}$ and $s<\frac12+\sigma$, we have
\(s<\frac12+\sigma<\frac{\sigma+1}{2}\times\frac{4k-2}{3k-1}\)
and thus $\Omega_2>1.$
\end{proof}

\subsection{The decay of $u_1$}
\label{subsec:31}

\noindent To establish the decay estimate of $\|u\|_{L^2}$, let $\mathbb{P}$ be the Leray projector onto divergence-free fields. By Duhamel's formulation, the solution can be expressed as:
\[ 
u(t)=e^{\nu\Lambda_1^{2s}t}u_{0}-\int_{0}^{t}e^{\nu\Lambda_1^{2s}(t - \tau)}\mathbb{P}(u\cdot\nabla u)(\tau)d\tau.
\] 
For the first velocity component $u_1$, we take the $L^2$ norm and use the boundedness of the Leray projector $\mathbb{P}$ to obtain
\begin{align*}
	\|u_1(t)\|_{L^2}
	&\le
	\|e^{\nu \Lambda_1^{2s}t}u_{1,0}\|_{L^2}
	+ C\int_0^t \|e^{\nu \Lambda_1^{2s}(t-\tau)}(u \cdot \nabla u_1)(\tau)\|_{L^2} \, d\tau\\
	&\quad+ C\int_0^t \|e^{\nu \Lambda_1^{2s}(t-\tau)}(u \cdot \nabla u_2)(\tau)\|_{L^2} \, d\tau.
\end{align*}
The linear term admits the decay estimate
\begin{align*}
	\|e^{\nu \Lambda_1^{2s}t}u_{1,0}\|_{L^2}
	&\le C(1 + \nu t)^{-\frac{\sigma}{2s}}
	\Big(\|\Lambda_1^{-\sigma} u_{1,0}\|_{L^2} + \|u_0\|_{L^2}\Big) \nonumber\\
	&\le C\varepsilon (1 + \nu t)^{-\frac{\sigma}{2s}}
	\le \frac{C_0}{4}\varepsilon (1 + \nu t)^{-\frac{\sigma}{2s}},
\end{align*}
where we choose $C_0$ such that $C_0 \ge 4C$. 

For the nonlinear term, due to the decay of $u_2$ is faster than $u_1$, we only consider the term contain $u\cdot\nabla u_1$.  Splitting it into the following two terms: 
\begin{align*}
	\int_0^t \|e^{\nu \Lambda_1^{2s}(t-\tau)}(u \cdot \nabla u_1)(\tau)\|_{L^2} \, d\tau
	&\le \int_{0}^{t} \|e^{\nu \Lambda_1^{2s}(t-\tau)} (u_1 \partial_1 u_1)(\tau)\|_{L^2} \, d\tau \\
	&\quad + \int_{0}^{t} \|e^{\nu \Lambda_1^{2s}(t-\tau)} (u_2 \partial_2 u_1)(\tau)\|_{L^2} \, d\tau \\
	&=: I_{1} + I_{2}.
\end{align*}
Applying Lemma~\ref{lem4} and then employing Lemma~\ref{multi one}, using the identity $\partial_1 u_1 = -\partial_2 u_2$, and substituting the bootstrap bounds
\eqref{est:u1}, \eqref{est:u2}, \eqref{est:p2u1}, and \eqref{est:p1u1},
we obtain
\begin{align*}
	I_1 + I_2
	\le& C \int_0^{t} (t-\tau)^{-\frac{1}{4s}} \|u_1(\tau)\|_{L^2}^{\frac{1}{2}} \|\partial_2 u_1(\tau)\|_{L^2}^{\frac{1}{2}}  \|\partial_1 u_1(\tau)\|_{L^2} d\tau \\
	&+C \int_0^{t} (t-\tau)^{-\frac{1}{4s}} \|u_2(\tau)\|_{L^2}^{\frac{1}{2}} \|\partial_2 u_2(\tau)\|_{L^2}^{\frac{1}{2}} \|\partial_2 u_1(\tau)\|_{L^2} d\tau \\
	\le& C C_0^2 \varepsilon^2 \int_0^{t} (t-\tau)^{-\frac{1}{4s}} (\tau+1)^{-\frac{2\sigma+1}{2s}} d\tau \\
	&+ C C_0^2 \varepsilon^2 \int_0^{t} (t-\tau)^{-\frac{1}{4s}} (\tau+1)^{-\frac{12\sigma+5}{12s}} d\tau. 
\end{align*}
Then applying Lemma \ref{decay lem} with $\alpha < 1$, $\beta > \alpha$, we can derive
\begin{align*}
    I_1 + I_2
    \le C C_0^2 \varepsilon^2 (t+1)^{-\frac{\sigma}{2s}}.
\end{align*}
Combining the estimates above, we obtain 
\begin{align*}
\|u_1(t)\|_{L^2} \leq \left( \frac{C_0}{4} + 4 C C_0^2 \varepsilon \right) \varepsilon (1 + t)^{-\frac{\sigma}{2s}}.
\end{align*}
When $\varepsilon$ is sufficiently small (specifically, $\varepsilon \leq (16 C C_0)^{-1}$),  we can deduce the bootstrap conclusion (\ref{estt0:u1}).

\subsection{The decay of $u_2$} 
\label{sec:u2two}

\noindent Recall the vorticity equation:
\begin{align*}
	\partial_t \omega + \nu \Lambda_1^{2s} \omega + \nabla \times (u \cdot \nabla u) = 0.
\end{align*}
Since $u$ is divergence-free, there exists a stream function $\psi$ such that
$u = \nabla^\perp \psi$, and the Biot--Savart law yields
$u = \nabla^\perp \Delta^{-1} \omega$.
Consequently, the second velocity component $u_2$ satisfies
\begin{align}\label{eq of u2}
	\partial_t u_2 + \nu \Lambda_1^{2s} u_2
	= -\partial_1 \Delta^{-1} \nabla \times (u \cdot \nabla u).
\end{align}
and the integral form:
\[ 
u_2(t)=e^{\nu\Lambda_1^{2s}t}u_{2,0}-\int_{0}^{t}e^{\nu\Lambda_1^{2s}(t - \tau)}\partial_1 \Delta^{-1} \nabla \times (u \cdot \nabla u)(\tau)d\tau.
\] 
Taking the $L^2$ norm, we obtain
\begin{align*}
	\|u_2(t)\|_{L^2}
	\le \|e^{\nu \Lambda_1^{2s}t}u_{2,0}\|_{L^2}
	+ \int_0^t
	\|e^{\nu \Lambda_1^{2s}(t-\tau)}\partial_1 \Delta^{-1}
	\nabla \times (u \cdot \nabla u)(\tau)\|_{L^2} \, d\tau.
\end{align*}
The linear term admits the decay estimate
\begin{align}
	\|e^{\nu \Lambda_1^{2s}t}u_{2,0}\|_{L^2}
	&\le C(1 + \nu t)^{-\frac{\sigma+1}{2s}}
	\big(\|\Lambda_1^{-\sigma} \psi_0\|_{L^2}
	+ \|u_{2,0}\|_{L^2}\big) \nonumber\\
	&\le C\varepsilon (1 + \nu t)^{-\frac{\sigma+1}{2s}}
	\le \frac{C_0}{4}\varepsilon
	(1 + \nu t)^{-\frac{\sigma+\frac{2}{3}}{2s}},
	\label{enuu0two}
\end{align}
where we used the smallness of the initial data and chose $C_0$ suitable large.
To estimate the nonlinear term, we rewrite
\begin{align*}
	&\int_0^t
	\|e^{\nu \Lambda_1^{2s}(t-\tau)}\partial_1 \Delta^{-1}
	\nabla \times (u \cdot \nabla u)(\tau)\|_{L^2} \, d\tau \nonumber\\
	=&\int_0^t
	\|e^{\nu \Lambda_1^{2s}(t-\tau)}\partial_1 \Delta^{-1}
	\nabla \times \div (u \otimes u)(\tau)\|_{L^2} \, d\tau \nonumber\\
	\le{}&
	\int_0^t
	\|e^{\nu \Lambda_1^{2s}(t-\tau)}\partial_1 \Delta^{-1}
	\nabla \times \partial_1 (u_1 u)(\tau)\|_{L^2} \, d\tau \nonumber\\
	&+
	\int_0^t
	\|e^{\nu \Lambda_1^{2s}(t-\tau)}\partial_1 \Delta^{-1}
	\nabla \times \partial_2 (u_2 u)(\tau)\|_{L^2} \, d\tau \nonumber\\
	=:{}& II_1 + II_2.
\end{align*}
For $II_1$, by Lemma \ref{lem4}, we take out $\Lambda_1^{\sigma+\frac{2}{3}}$, and then use the fact that $\|\Lambda^{-\frac{1}{2}+\sigma}f\|_{L^2} \le \|\Lambda_1^{-\frac{1}{2}+\sigma}f\|_{L^2}$ and applying Lemma \ref{lem1}, we have  

\begin{align}
    II_1 \le & \int_0^t (t-\tau)^{-\frac{\sigma+\frac{2}{3}}{2s}} \|\Lambda^{-1} \Lambda_1^{\frac{4}{3}-\sigma}(u_1  u)\|_{L^2} d\tau \nonumber\\
    \le & \int_0^t (t-\tau)^{-\frac{\sigma+\frac{2}{3}}{2s}} \|\Lambda^{-\frac{1}{2}-\sigma} \Lambda_1^{\frac{5}{6}}(u_1  u)\|_{L^2} d\tau \nonumber\\
    \le & \int_0^t (t-\tau)^{-\frac{\sigma+\frac{2}{3}}{2s}} \Big(\|\Lambda^{-\frac{1}{2}-\sigma} \Big[(\Lambda_1^{\frac{5}{6}}u_1)  u\Big]\|_{L^2} + \|\Lambda^{-\frac{1}{2}-\sigma} \Big[u_1 (\Lambda_1^{\frac{5}{6}} u)\Big]\|_{L^2}\Big) d\tau. \label{est4ofII1}   
\end{align}
By applying Lemma \ref{lem2} with $s=-\frac{1}{2}-\sigma, s_1=\frac{1}{2}-\sigma, s_2=0$ and interpolation, we obtain
\begin{align*}
    &\int_0^t (t-\tau)^{-\frac{\sigma+\frac{2}{3}}{2s}} \Big(\|\Lambda^{-\frac{1}{2}-\sigma} \Big[(\Lambda_1^{\frac{5}{6}}u_1)  u\Big]\|_{L^2} + \|\Lambda^{-\frac{1}{2}-\sigma} \Big[u_1 (\Lambda_1^{\frac{5}{6}} u)\Big]\|_{L^2}\Big) d\tau\\
    \le & \int_0^t (t-\tau)^{-\frac{\sigma+\frac{2}{3}}{2s}} \|\Lambda^{\frac{1}{2}-\sigma} u\|_{L^2} \|\Lambda_1^{\frac{5}{6}} u\|_{L^2}
     d\tau\\
    \le & \int_0^t (t-\tau)^{-\frac{\sigma+\frac{2}{3}}{2s}} \|u\|_{L^2}^{\frac{1}{2}+\sigma}
    \|\nabla u\|_{L^2}^{\frac{1}{2}-\sigma}
    \|u\|_{L^2}^{\frac{1}{6}}
    \|\partial_1 u\|_{L^2}^{\frac{5}{6}} d\tau\\
    \le& C C_0^2 \varepsilon^2 \int_0^{t} (t-\tau)^{-\frac{\sigma+\frac{2}{3}}{2s}} (\tau+1)^{-\frac{2\sigma+\frac{5}{6}}{2s}} d\tau\\
    \le& C C_0^2 \varepsilon^2 (t+1)^{-\frac{\sigma+\frac{2}{3}}{2s}}.
\end{align*}
It follows from the boundedness of the Riesz operator $\Delta^{-1} \nabla \times \partial_2$, together with Lemma \ref{lem4} and subsequently Lemma \ref{multi one}, that
\begin{align}
II_2 \le& \int_0^t\|e^{\nu \Lambda_1^{2s}(t-\tau)}\partial_1 \Delta^{-1} \nabla \times \partial_2(u_2  u)(\tau)\|_{L^2} d\tau \nonumber\\
\le& C\int_0^t\|e^{\nu \Lambda_1^{2s}(t-\tau)}\partial_1 (u_2  u)(\tau)\|_{L^2} d\tau \nonumber\\
\le & C\int_0^t (t-\tau)^{-\frac{3}{4s}}\| (u_2  u)(\tau)\|_{L_{x_1}^1 L_{x_2}^2} d\tau \nonumber\\
\le & C\int_0^t (t-\tau)^{-\frac{3}{4s}}\|u_2\|_{L^2}^\frac{1}{2} \|\partial_2 u_2\|_{L^2}^\frac{1}{2}   \|u\|_{L^2} d\tau \nonumber\\
\le& C C_0^2 \varepsilon^2 \int_0^{t} (t-\tau)^{-\frac{3}{4s}} (\tau+1)^{-\frac{12\sigma+5}{12s}} d\tau \nonumber\\
\le& C C_0^2 \varepsilon^2 (t+1)^{-\frac{\sigma+\frac{2}{3}}{2s}}. \label{est4ofII2}
\end{align}
Combining (\ref{enuu0two}), (\ref{est4ofII1}) and (\ref{est4ofII2}), we have
\begin{align*}
    \|u_2(t)\|_{L^2} \leq \left( \frac{C_0}{4} + 4 C C_0^2 \varepsilon \right) \varepsilon (1 + t)^{-\frac{\sigma+\frac{2}{3}}{2s}}.
\end{align*}
By taking $\varepsilon$ sufficiently small, we can get the bootstrap conclusion (\ref{estt0:u2}).
\subsection{The decay of $\partial_2 u_1$}
\noindent By Duhamel's principle, we have
\[ 
\|\partial_2 u_1(t)\|_{L^2} \leq \|e^{\nu \Lambda_1^{2s}t} \partial_2u_{1,0}\|_{L^2} + C\int_0^t \|e^{\nu \Lambda_1^{2s}(t-\tau)} \partial_2 (u \cdot \nabla u)(\tau)\|_{L^2} d\tau.
\]
For the linear term involving initial data $\partial_2 u_0$, we can obtain the decay estimate through Lemma \ref{lem2}.
\begin{align}
\|e^{\nu \Lambda_1^{2s}t} \partial_2u_{1,0}\|_{L^2} \leq C(1 + \nu t)^{-\frac{\sigma}{2s}} \left(\|\Lambda_1^{-\sigma} \partial_2u_{1,0}\|_{L^2} + \|\partial_2u_{1,0}\|_{L^2}\right)
\leq \frac{C_0}{4} \varepsilon (1 + \nu t)^{-\frac{\sigma}{2s}}. \label{p2u1linear}
\end{align}  
For the nonlinear term, due to $u_2$ decay more faster than $u_1$, we only consider the case contain $u\cdot\nabla u_1$. After eliminating some terms using the divergence-free condition, it can be estimated as
\begin{align*}
    &\int_0^t \|e^{\nu \Lambda_1^{2s}(t-\tau)} \partial_2 (u \cdot \nabla u_1)(\tau)\|_{L^2} d\tau \\
    \leq& \int_0^t \|e^{\nu \Lambda_1^{2s}(t-\tau)} u_1\partial_1\partial_2u_1(\tau)\|_{L^2} d\tau
    +\int_0^t \|e^{\nu \Lambda_1^{2s}(t-\tau)} u_2\partial_2^2u_1(\tau)\|_{L^2} d\tau \\
    :=&III_{1} + III_{2}. 
\end{align*}
By using Lemma \ref{lem4} first and then applying  Lemma \ref{multi one}, we have
\begin{align*}
III_{1}
\leq C \int_{0}^{t} (t-\tau)^{-\frac{1}{4s}} \|u_1(\tau)\|_{L^2}^{\frac{1}{2}} \|\partial_2 u_1(\tau)\|_{L^2}^{\frac{1}{2}} \|\partial_1\partial_2 u_1(\tau)\|_{L^2} d\tau.
\end{align*}
It follows from interpolation with $0<\theta < \min\{\frac{1}{9},\frac{1-2\sigma}{2\sigma}\}$ that
\begin{align}\label{inter pp2u10}
    &\|\partial_1\partial_2 u_1(\tau)\|_{L^2} \le \|\partial_1 u_1\|_{L^2}^{1-\theta} 
    \|\partial_1\partial_2^{\frac{1}{\theta}} u_1\|_{L^2}^\theta \le C_0\varepsilon (1+t)^{-\frac{(\sigma+1)(1-\theta)}{2s}}.
\end{align}
Similarly, we can also obtain a decay rate estimate of $\|\partial_2^2 u_1(\tau)\|_{L^2}$ for $0<\theta < \min\{\frac{1}{9},\frac{1-2\sigma}{2\sigma}\}$ which will be used in the estimate for $III_2$,
\begin{align}\label{inter pp2u1}
    \|\partial_2^2 u_1(\tau)\|_{L^2}\le C_0\varepsilon (1+t)^{-\frac{\sigma(1-\theta)}{2s}}.
\end{align}
Combining (\ref{est:p2u1}) and (\ref{est:p1u1}), we deduce
\begin{align}\label{III1}
III_{1}\leq&  C (C_0\varepsilon)^2 \int_{0}^{t} (t-\tau)^{-\frac{1}{4s}} (1+\tau)^{-\frac{\sigma}{2s}} (1+\tau)^{-\frac{(\sigma+1)(1-\theta)}{2s}} d\tau.
\end{align}
Given that $0<\theta<\frac{1}{9}$, it follows that
\begin{align*}
	s < \frac{5}{12} + \sigma
	\le \frac{17}{18}\sigma + \frac{8}{18}
	< \Big(1-\frac{\theta}{2}\Big)\sigma + \frac12(1-\theta),
\end{align*}
which implies
\begin{align*}
	-\frac{\sigma}{2s}
	- \frac{(\sigma+1)(1-\theta)}{2s}
	< -1.
\end{align*}
Therefore, by applying Lemma~\ref{decay lem} with
$\alpha<1$ and $\beta>1$, we obtain
\begin{align}
	III_1
	&\le
	C (C_0\varepsilon)^2(1+t)^{-\frac{1}{4s}}
	\le
	C (C_0\varepsilon)^2(1+t)^{-\frac{\sigma}{2s}}. \label{sec4decayp2u1est1}
\end{align}
By Lemma \ref{lem4}, Lemma \ref{multi one} and interpolation (\ref{inter pp2u1}), we can derive
\begin{align}
III_{2}
&\leq C \int_{0}^{t} (t-\tau)^{-\frac{1}{4s}} \|u_2(\tau)\|_{L^2}^{\frac{1}{2}} \|\partial_2 u_2(\tau)\|_{L^2}^{\frac{1}{2}}  \|\partial_2^2 u_1(\tau)\|_{L^2} d\tau \nonumber\\
&\leq C (C_0\varepsilon)^2 \int_{0}^{t} (t-\tau)^{-\frac{1}{4s}} (1+\tau)^{-\frac{6\sigma+5}{12s}}  (1+\tau)^{-\frac{\sigma(1-\theta)}{2s}} d\tau. \label{III2}
\end{align}
By direct calculation and the range of $s$, we can obtain
\begin{align*}
    III_2
    \le&
    C (C_0\varepsilon)^2\int_{0}^{t} (t-\tau)^{-\frac{1}{4s}} (1+\tau)^{-\frac{12\sigma+5}{12s}}  (1+\tau)^{\frac{\sigma\theta}{2s}} d\tau\\
    \le&
    C (C_0\varepsilon)^2\int_{0}^{t} (t-\tau)^{-\frac{1}{4s}} (1+\tau)^{-1}  (1+\tau)^{\frac{\sigma\theta}{2s}} d\tau.
\end{align*}
By using Lemma \ref{decay lem} and combining the range of $\theta$ ($0<\theta<\frac{1-2\sigma}{2\sigma}$), we have
\begin{align}
    III_2\le& C (C_0\varepsilon)^2\int_{0}^{t} (t-\tau)^{-\frac{1}{4s}} (1+\tau)^{-1}  (1+\tau)^{\frac{\sigma\theta}{2s}} d\tau \nonumber\\
    \le&
    C (C_0\varepsilon)^2(1+t)^{\frac{2\sigma\theta-1}{4s}} \le  C (C_0\varepsilon)^2
    (1+t)^{-\frac{\sigma}{2s}}.
    \label{sec4decayp2u1est2}
\end{align}
As a direct consequence of (\ref{sec4decayp2u1est1}) and (\ref{sec4decayp2u1est2}), we obtain
\begin{align}
	III_{1}+III_{2}
	\le C (C_0\varepsilon)^2 (1+t)^{-\frac{\sigma}{2s}}.
	\label{est_III1to4}
\end{align}
Combining \eqref{p2u1linear} with \eqref{est_III1to4}, we deduce
\begin{align*}
	\|\partial_2 u_1(t)\|_{L^2}
	\le \left( \frac{C_0}{4} + 4 C C_0^2 \varepsilon \right)
	\varepsilon (1 + t)^{-\frac{\sigma}{2s}}.
\end{align*}
By choosing $\varepsilon>0$ sufficiently small, the bootstrap bound
\eqref{estt0:p2u1} follows.

\subsection{The decay of $\partial_1 u_1$}
\noindent By Duhamel's principle, we have
\vspace{-0.2cm}
\[ 
\|\partial_1 u_1(t)\|_{L^2} \leq \|e^{\nu \Lambda_1^{2s}t} \partial_1 u_{1,0}\|_{L^2} + C\int_0^t \|e^{\nu \Lambda_1^{2s}(t-\tau)} \partial_1 (u \cdot \nabla u)(\tau)\|_{L^2} d\tau.\vspace{-0.2cm}
\]
For the linear term involving initial data $\partial_1 u_{1,0}$, we can obtain the decay estimate through Lemma \ref{lem4}:
\vspace{-0.2cm}
\begin{align}
\|e^{\nu \Lambda_1^{2s}t} \partial_1 u_{1,0}\|_{L^2} &\leq C(1 + \nu t)^{-\frac{\sigma+1}{2s}} \left(\|\Lambda_1^{-\sigma} u_{1,0}\|_{L^2} + \|\partial_1 u_{1,0}\|_{L^2}\right) \nonumber\\
&\leq \frac{C_0}{4} \varepsilon (1 + \nu t)^{-\frac{\sigma+1}{2s}}.\vspace{-0.2cm} \label{exp_par1_u0_one}
\end{align} 
Similar as the process deal with $u_1$, we only consider the nonlinear term contain $u\cdot\nabla u_1$. Lemma \ref{lem2} yields\vspace{-0.2cm}
\begin{align*}
& \int_0^t \|e^{\nu \Lambda_1^{2s}(t-\tau)} \partial_1 (u \cdot \nabla u_1)(\tau)\|_{L^2} d\tau\\
\leq& C \int_{0}^{t} (t-\tau)^{-\frac{3}{4s}}\big\|  \|(u \cdot \nabla u_1)(\tau)\|_{L_{x_1}^1} \big\|_{L_{x_2}^2} d\tau\\
\le& C \int_0^{t} (t-\tau)^{-\frac{3}{4s}} \|u_1(\tau)\|_{L^2}^{\frac{1}{2}}   \|\partial_2 u_1(\tau)\|_{L^2}^{\frac{1}{2}} \|\partial_1 u_1(\tau)\|_{L^2} d\tau\\
&+ C \int_0^{t} (t-\tau)^{-\frac{3}{4s}} \|u_2(\tau)\|_{L^2}^{\frac{1}{2}} \|\partial_2 u_2(\tau)\|_{L^2}^{\frac{1}{2}} \|\partial_2 u_1(\tau)\|_{L^2} d\tau.\vspace{-0.2cm}
\end{align*}
Since $\partial_1 u_1 = -\partial_2 u_2$, and substituting (\ref{est:u1}), (\ref{est:u2}), (\ref{est:p2u1}) and (\ref{est:p1u1}), we have
\vspace{-0.2cm}
\begin{align*}
& \int_0^t \|e^{\nu \Lambda_1^{2s}(t-\tau)} \partial_1 (u \cdot \nabla u_1)(\tau)\|_{L^2} d\tau\\  
\le&C \int_0^{t} (t-\tau)^{-\frac{3}{4s}} \|u_1(\tau)\|_{L^2}^{\frac{1}{2}} \|\partial_2 u_1(\tau)\|_{L^2}^{\frac{1}{2}}  \|\partial_1 u_1(\tau)\|_{L^2} d\tau \nonumber\\
&+C \int_0^{t} (t-\tau)^{-\frac{3}{4s}} \|u_2(\tau)\|_{L^2}^{\frac{1}{2}} \|\partial_1 u_1(\tau)\|_{L^2}^{\frac{1}{2}} \|\partial_2 u_1(\tau)\|_{L^2} d\tau\\
\le& C C_0^2 \varepsilon^2 \int_0^{t} (t-\tau)^{-\frac{3}{4s}} (\tau+1)^{-\frac{2\sigma+1}{2s}} d\tau \nonumber\\
&+ C C_0^2 \varepsilon^2 \int_0^{t} (t-\tau)^{-\frac{3}{4s}} (\tau+1)^{-\frac{12\sigma+5}{12s}} d\tau.\vspace{-0.2cm}
\end{align*}
Then applying Lemma \ref{decay lem} with $\alpha < 1$, $\beta > 1$, we can deduce
\vspace{-0.2cm}
\begin{align}
    \int_0^t \|e^{\nu \Lambda_1^{2s}(t-\tau)} \partial_1 (u \cdot \nabla u_1)(\tau)\|_{L^2} d\tau
    \le C C_0^2 \varepsilon^2 (t+1)^{-\frac{\sigma+1}{2s}}.\label{est4cN4}\vspace{-0.2cm}
\end{align}
Combining (\ref{exp_par1_u0_one}) and (\ref{est4cN4}), we can derive  (\ref{estt0:p1u1}) by taking $\varepsilon$ sufficiently small.
\subsection{The decay of $\partial_1 u_2$}\label{subsec:35}
\noindent By acting $\partial_1$ on (\ref{eq of u2}), we obtain
\begin{align*}
    \partial_t \partial_1 u_2 + \nu\Lambda^{2s} \partial_1 u_2 = 
    -\partial_1^2 \Delta^{-1} \nabla\times \div (u \otimes u).
\end{align*}
Using Duhamel's principle and taking the $L^2$-norm, we have
\begin{align*}
\|\partial_1 u_2(t)\|_{L^2} \leq \|e^{\nu \Lambda_1^{2s}t}\partial_1u_{2,0}\|_{L^2} + \int_0^t \|e^{\nu \Lambda_1^{2s}(t-\tau)}\partial_1^2 \Delta^{-1} \nabla \times \div (u \otimes u)(\tau)\|_{L^2} d\tau. 
\end{align*}
The linear term decays as
\begin{align*}
\|e^{\nu \Lambda_1^{2s}t}\partial_1u_{2,0}\|_{L^2} 
&\leq C(1 + \nu t)^{-\frac{\sigma+2}{2s}} \big(\|\Lambda_1^{-\sigma} \psi_0\|_{L^2} + \|\partial_1u_{2,0} \|_{L^2}\big) \\
&\leq C\varepsilon (1 + \nu t)^{-\frac{\sigma+2}{2s}}
\leq \frac{C_0}{4}\varepsilon (1 + \nu t)^{-\frac{2\sigma}{s}}. 
\end{align*}
We devide the nonlinear term into two parts:
\begin{align*}
&\int_0^t \|e^{\nu \Lambda_1^{2s}(t-\tau)}\partial_1^2 \Delta^{-1} \nabla \times \div (u \otimes u)(\tau)\|_{L^2} d\tau \\
\leq& \int_0^t\|e^{\nu \Lambda_1^{2s}(t-\tau)}\partial_1^2 \Delta^{-1} \nabla \times \partial_1(u_1  u)(\tau)\|_{L^2} d\tau
+ \int_0^t\|e^{\nu \Lambda_1^{2s}(t-\tau)}\partial_1^2 \Delta^{-1} \nabla \times \partial_2( u_2  u)(\tau)\|_{L^2} d\tau \\
:=& V_1 + V_2.
\end{align*}
We write $V_1$ in the form of a multiplier:
\begin{align*}
    V_1 \le C\int_0^t\|e^{\nu \Lambda_1^{2s}(t-\tau)}\Lambda^{-1} \Lambda_1^3(u_1  u)(\tau)\|_{L^2} d\tau.
\end{align*}
Taking out $\Lambda_1^{4\sigma}$, and applying Lemma \ref{lem4}, we obtain
\begin{align*}
    V_1\le& C\int_0^{t-1} (t-\tau)^{-\frac{2\sigma}{s}} \|\Lambda^{-1} \Lambda_1^{3-4\sigma} (u_1  u)\|_{L^2} d\tau
    + C\int_{t-1}^t \|\partial_1^2 (u_1  u)\|_{L^2} d\tau.
\end{align*}
We bound $\Lambda_1^{2-4\sigma}$ by $\Lambda^{2-4\sigma}$, and then use Lemma \ref{lem1} to get
\begin{align*}
    V_1
    \le& C\int_0^{t-1} (t-\tau)^{-\frac{2\sigma}{s}} \|\Lambda^{1-4\sigma} \Lambda_1(u_1  u)\|_{L^2} 
    d\tau+ C\int_{t-1}^t \|\partial_1^2 (u_1  u)\|_{L^2} d\tau \\
    \le& C\int_0^{t-1} (t-\tau)^{-\frac{2\sigma}{s}} 
    \|\Lambda^{1-4\sigma} \big((\Lambda_1 u)  u\big)\|_{L^2} d\tau+ C\int_{t-1}^t \|\partial_1^2 (u_1  u)\|_{L^2} d\tau \\
    :=& V_{11} + V_{12}. 
\end{align*}
Since $\frac{1}{3}<\sigma<\frac{1}{2}$, by Lemma \ref{lem2}, interpolation and Lemma \ref{decay lem}, we derive
\begin{align}
    V_{11}
    \le& C\int_0^{t-1} (t-\tau)^{-\frac{2\sigma}{s}} \|\Lambda^{2-4\sigma} u\|_{L^2} \|\partial_1 u\|_{L^2}d\tau \nonumber\\
    \le& C\int_0^{t-1} (t-\tau)^{-\frac{2\sigma}{s}} 
    \|\nabla u\|_{L^2}^{2-4\sigma} \|u\|_{L^2}^{4\sigma-1}
    \|\partial_1 u\|_{L^2}   d\tau \nonumber\\
    \le& CC_0^2 \varepsilon^2\int_0^{t-1} (t-\tau)^{-\frac{2\sigma}{s}} (1+\tau)^{-\frac{2\sigma+1}{2s}} d\tau \nonumber\\
    \le& C (C_0\varepsilon)^2 (1+t)^{-\frac{2\sigma}{s}}. \label{estofV11}
\end{align}
It follows from  Lemma \ref{multi two} that
\begin{align}
    V_{12} \le& C \int_{t-1}^t  \big(\|\partial_1 u \partial_1 u\|_{L^2} + \|u \partial_1^2 u\|_{L^2}\big) d\tau \nonumber \\
    \le& C  \int_{t-1}^t  \big(\|\partial_1 u\|_{L^2} \|\partial_1^2 u\|_{L^2}^\frac{1}{2} 
    \|\partial_1\partial_2 u\|_{L^2}^\frac{1}{2} 
    + \|u\|_{L^2}^\frac{1}{2}\|\partial_1 u\|_{L^2}^\frac{1}{2} \|\partial_1^2 u\|_{L^2}^\frac{1}{2} \|\partial_1^2\partial_2 u\|_{L^2}^\frac{1}{2}\big) d\tau. \label{V122est01}
\end{align}
Then we use interpolation to obtain
\begin{align}
    &\|\partial_1^2 u\|_{L^2}
    \|\partial_1\partial_2 u\|_{L^2}
    \le \|\partial_1 u\|_{L^2} (\|\partial_1^3 u\|_{L^2} + \|\partial_1\partial_2^2 u\|_{L^2})
    \le C \varepsilon(1+t)^{-\frac{\sigma+1}{2s}}, \nonumber\\
    &\|\partial_1^2 u\|_{L^2}
    \|\partial_1^2\partial_2 u\|_{L^2}
    \le  \|\partial_1 u\|_{L^2} (\|\partial_1^3 u\|_{L^2} + \|\partial_1^3\partial_2^2 u\|_{L^2})
    \le  C \varepsilon(1+t)^{-\frac{\sigma+1}{2s}}. \label{V122est02}
\end{align}
By substituting (\ref{est:u1}), (\ref{est:p1u1}) and (\ref{V122est02}) into (\ref{V122est01}), we have
\begin{align}
    V_{12} \le  C C_0^2 \varepsilon^2 \int_{t-1}^t  (1+\tau)^{-\frac{3\sigma+3}{4s}} + (1+\tau)^{-\frac{3\sigma+2}{4s}} d\tau
     \le C (C_0\varepsilon)^2 (1+t)^{-\frac{2\sigma}{s}}. \label{V122est03}
\end{align}
Applying the boundness of Riesz operator $\Delta^{-1} \nabla \times \partial_2$ to $V_2$, we can obtain
\begin{align*}
    V_2 \le C\int_0^t\|e^{\nu \Lambda_1^{2s}(t-\tau)}\partial_1^2 ( u_2  u)(\tau)\|_{L^2} d\tau.
\end{align*}
Using Lemma \ref{lem4}, we can deduce
\begin{align*}
    V_2 \le& C \int_0^{t-1} (t-\tau)^{-\frac{1}{s}} \|u_2 u\|_{L^2} d\tau 
    + C\int_{t-1}^t \|\partial_1^2(u_2 u)\|_{L^2} d\tau \\
    :=& V_{21} + V_{22}. 
\end{align*}
Since $t-\tau \ge 1$ when $\tau$ varies in $(0,t-1)$, we have
\begin{align*}
    V_{21} \le C \int_0^{t-1} (t-\tau)^{-\frac{2\sigma}{s}} \|u_2 u\|_{L^2} d\tau . 
\end{align*}
By Lemma \ref{multi two} and the decay rate of (\ref{est:u1}), (\ref{est:u2}) and (\ref{est:p1u1}), we have
\begin{align*}
    V_{21} \le& C \int_0^{t-1} (t-\tau)^{-\frac{2\sigma}{s}} \|u_2\|_{L^2}^\frac{1}{2} 
    \|\partial_2 u_2\|_{L^2}^\frac{1}{2} 
    \|u\|_{L^2}^\frac{1}{2} \|\partial_1 u\|_{L^2}^\frac{1}{2} d\tau \\
    \le& C C_0^2 \varepsilon^2 \int_0^{t-1} (t-\tau)^{-\frac{2\sigma}{s}} (1+\tau)^{-\frac{3\sigma+2}{3s}} d\tau. 
\end{align*}
Using Lemma \ref{decay lem} with $\alpha \ge 1$ and $\beta \ge \alpha$, we have
\begin{align}\label{estofV21}
    V_{21} \le C C_0^2 \varepsilon^2\int_0^{t-1} (t-\tau)^{-\frac{2\sigma}{s}} (1+\tau)^{-\frac{3\sigma+2}{3s}} d\tau
    \le& C (C_0\varepsilon)^2 (1+t)^{-\frac{2\sigma}{s}}.
\end{align}
By direct calculation, \vspace{-0.2cm}
\begin{align*}
    V_{22}  \le& C\int_{t-1}^t \big( \|u\partial_1^2 u\|_{L^2}  +  \|\partial_1 u \partial_1 u\|_{L^2}\big) d\tau.\vspace{-0.2cm}
\end{align*}
By (\ref{V122est01}), (\ref{V122est02}) and (\ref{V122est03}), we obtain\vspace{-0.2cm}
\begin{align}
    V_{22} \label{estofV22}
    \le C (C_0\varepsilon)^2 (1+t)^{-\frac{2\sigma}{s}}. \vspace{-0.2cm}
\end{align}
Combining (\ref{estofV11}), (\ref{V122est03}), (\ref{estofV21}) and (\ref{estofV22}), we can obtain the bootstrap conclusion (\ref{estt0:p1u2}) by taking $\varepsilon$ sufficiently small.
\subsection[Estimate for L2 norm with negative derivative]{Estimate for $\|\Lambda_1^{-\sigma} u\|_{L^2}$}\label{negaderi_est}
\noindent In this subsection, we show that, under the smallness assumption on $\|\Lambda_1^{-\sigma} u_0\|_{L^2}$, the decay rates of $u$ and its derivatives are sufficient to guarantee the propagation of $\|\Lambda_1^{-\sigma} u\|_{L^2}$.

\noindent We establish the estimate for $\|\Lambda_1^{-\sigma} u\|_{L^2}$ through the following energy argument. Applying $\Lambda_1^{-\sigma}$ to the equation and taking the $L^2$ inner product with $\Lambda_1^{-\sigma} u$, we obtain
\begin{align}\label{est_M} 
\frac{1}{2}\frac{d}{dt}\|\Lambda_1^{-\sigma} u\|_{L^2}^2 + \nu\|\Lambda_1^{s-\sigma} u\|_{L^2}^2 = -\int_{\mathbb{R}^2} \Lambda_1^{-\sigma}(u \cdot \nabla u) \cdot \Lambda_1^{-\sigma} u \, dx. 
\end{align}
The right-hand side of (\ref{est_M}) can be decomposed into two parts and estimated as
\begin{align*}
\int_{\mathbb{R}^2} \Lambda_1^{-\sigma}(u \cdot \nabla u) \cdot \Lambda_1^{-\sigma} u \, dx =& \int_{\mathbb{R}^2} \Lambda_1^{-\sigma}(u_1 \partial_1 u) \cdot \Lambda_1^{-\sigma} u \, dx + \int_{\mathbb{R}^2} \Lambda_1^{-\sigma}(u_2 \partial_2 u) \cdot \Lambda_1^{-\sigma} u \, dx\\
\leq& C \left(\left\|\Lambda_1^{-\sigma}(u_1 \partial_1 u)\right\|_{L^2} + \left\|\Lambda_1^{-\sigma}(u_2 \partial_2 u)\right\|_{L^2}\right) \left\|\Lambda_1^{-\sigma} u\right\|_{L^2}\\ 
:=& M_0.
\end{align*}
Integrating in  time  on equation (\ref{est_M}), we obtain
\begin{align}\label{integrated_energy}
	\|\Lambda_1^{-\sigma} u(t)\|_{L^2}^2 + 2\nu\int_0^t \|\Lambda_1^{s-\sigma} u(\tau)\|_{L^2}^2 d\tau = \|\Lambda_1^{-\sigma} u_0\|_{L^2}^2 + 2\int_0^t M_0(\tau) d\tau.
\end{align}
By Lemma \ref{product nega lem},
\begin{align*}
    M_0 &\leq C \Big( \left\|\partial_2 u_2\right\|_{L^2}^{\frac{1}{2}} \left\|u_2\right\|_{L^2}^{\sigma} \left\|\partial_1 u_2\right\|_{L^2}^{\frac{1}{2}-\sigma}\left\|\partial_2 u\right\|_{L^2} \nonumber \\
    &\quad + \left\|\partial_2 u_1\right\|_{L^2}^{\frac{1}{2}} \left\|u_1\right\|_{L^2}^{\sigma} \left\|\partial_1 u_1\right\|_{L^2}^{\frac{1}{2}-\sigma} \left\|\partial_1 u\right\|_{L^2} \Big) \left\|\Lambda_1^{-\sigma} u\right\|_{L^2}.
\end{align*}
Using the divergence-free condition of $u$, we have
\begin{align}
    M_0 &\leq C \Big( \left\|\partial_1 u_1\right\|_{L^2}^{\frac{1}{2}} \left\|u_2\right\|_{L^2}^{\sigma} \left\|\partial_1 u_2\right\|_{L^2}^{\frac{1}{2}-\sigma}\left\|\partial_2 u\right\|_{L^2} \nonumber \\
    &\quad + \left\|\partial_2 u_1\right\|_{L^2}^{\frac{1}{2}} \left\|u_1\right\|_{L^2}^{\sigma} \left\|\partial_1 u_1\right\|_{L^2}^{\frac{1}{2}-\sigma} \left\|\partial_1 u\right\|_{L^2} \Big) \left\|\Lambda_1^{-\sigma} u\right\|_{L^2}. \label{est_of_M}
\end{align}
By substituting (\ref{est:u1}), (\ref{est:u2}), (\ref{est:p2u1}),  (\ref{est:p1u1}) and (\ref{est:p1u2}) into (\ref{est_of_M}), we deduce
\begin{align*}
    M_0 \le C C_0^3 \varepsilon^3 \big((1+t)^{-\frac{-18\sigma^2+25\sigma+3}{12s}} 
    + (1+t)^{-\frac{2\sigma+1}{2s}} \big).
\end{align*}
It can be verified that
\[
s < \tfrac{5}{12} + \sigma < \tfrac{-18\sigma^2 + 25\sigma + 3}{12}.
\]
As a consequence, we obtain
\[
\int_0^t M_0(\tau)\, d\tau \le C C_0^3 \varepsilon^3.
\]
Combining this bound with the assumption $\|\Lambda_1^{-\sigma} u_0\|_{L^2} \le \varepsilon$,
it follows from \eqref{integrated_energy} that
\begin{align*}
	\|\Lambda_1^{-\sigma} u(t)\|_{L^2}^2
	\le \varepsilon^2 + 2 C C_0^3 \varepsilon^3.
\end{align*}
For $\varepsilon>0$ sufficiently small so that $2 C C_0^3 \varepsilon \le \tfrac12$,
we further deduce
\begin{align*}
	\|\Lambda_1^{-\sigma} u(t)\|_{L^2}
	\le \sqrt{\tfrac{3}{2}}\,\varepsilon
	\le \tfrac{C_0}{2}\,\varepsilon.
\end{align*}

\section{Proof of Theorem \ref{thm4}}
\label{sec:thm4}

In this section, we complete the proof of Theorem~\ref{thm4}. 
We again employ a bootstrap argument. 
Assume that there exist constants $C_0>0$ and $T>0$ such that, for all $t\in[0,T)$,
the following estimates hold:
\vspace{-0.5cm}
\begin{align}
    \| u(t)\|_{H^{k}} &\leq C_0\varepsilon, \label{61}\\
    \|[x_2]^{\frac{3\gamma+4}{7}} u(t)\|_{L^2} &\leq C_0\varepsilon(1+t)^{-\sigma/2s}, \label{62}\\
    \|[x_2]^\gamma u_2(t)\|_{L^2} &\leq C_0\varepsilon(1+t)^{-(\sigma+1)/2s}, \label{63}\\
    \|[x_2]^{\frac{5\gamma+2}{7}}\partial_2 u_1(t)\|_{L^2} &\leq C_0\varepsilon(1+t)^{-\sigma/2s}, \label{64}\\
    \|[x_2]^{\frac{3\gamma+4}{7}}\partial_1 u(t)\|_{L^2} &\leq C_0\varepsilon(1+t)^{-(\sigma+1)/2s},   \label{65}\\
    \|[x_2]^\gamma\partial_1 u_2(t)\|_{L^2} &\leq C_0\varepsilon(1+t)^{-(2\sigma+1)/2s}.   \label{66}\vspace{-0.3cm}
\end{align}
Under the assumptions of Theorem \ref{thm4}, we will prove the following estimates in the following subsections:
\vspace{-0.5cm}
\begin{align}
    \| u(t)\|_{H^{k}} &\leq 1/2 C_0\varepsilon, \label{6_1}\\
    \|[x_2]^{\frac{3\gamma+4}{7}} u(t)\|_{L^2} &\leq 1/2 C_0\varepsilon(1+t)^{-\sigma/2s},\label{6_2}\\
    \|[x_2]^\gamma u_2(t)\|_{L^2} &\leq 1/2 C_0\varepsilon(1+t)^{-(\sigma+1)/2s},\label{6_3}\\
    \|[x_2]^{\frac{5\gamma+2}{7}}\partial_2 u_1(t)\|_{L^2} &\leq 1/2 C_0\varepsilon(1+t)^{-\sigma/2s}, \label{6_4}\\
    \|[x_2]^{\frac{3\gamma+4}{7}}\partial_1 u(t)\|_{L^2} &\leq 1/2 C_0\varepsilon(1+t)^{-(\sigma+1)/2s},  \label{6_5} \\
    \|[x_2]^\gamma\partial_1 u_2(t)\|_{L^2} &\leq 1/2 C_0\varepsilon(1+t)^{-(2\sigma+1)/2s}. \label{6_6}
\end{align}
Then using the standard bootstrap argument, we can conclude that $T = +\infty$ and derive the conclusions in Theorem \ref{thm4}. At the end, based on the decay rates of $u$ and its derivatives, we also prove the propagation of the negative horizontal derivative of $u$.

\subsection{Weighted estimate of the pressure}\label{subsec:512}

The energy estimate of $\|u\|_{H^k}$ is the same as the process in subsection \ref{sec:zhejie2}.
So the next step is to establish decay estimates for the solution.
A major difficulty arises from the introduction of weighted norms:
the Leray projection operator $\mathbb{P}$ is no longer directly bounded,
since it involves Riesz transforms in a weighted setting.
In particular, the weight $[x_2]^{\eta}$ fails to be an $A_2$ when $\eta \ge \frac12$, and therefore standard weighted boundedness results for Riesz transforms do not apply. Remarkably, the Riesz transform associated with the pressure term can still be eliminated under the weight $[x_2]^{\eta}$. This is made possible by the special structural properties of the pressure,
which allow us to bypass the lack of $A_2$ control.
The precise formulation of this observation is given in the following lemma.

\begin{lem}\label{lem_p}
	Let $\frac{1}{2} < \eta < \frac{3}{2}$.
	Assume that $p$ denotes the pressure associated with system \eqref{NS},
	and let
	$\mathcal{E} = e^{\nu \Lambda_1^{2s}(t-\tau)}$.
	Then there exists a constant $C>0$, independent of $t$ and $\tau$, such that
	\begin{equation}\label{pressure_lem_result_equ1}
		\|\mathcal{E}\,[x_2]^{\eta}\nabla p\|_{L^2}
		\le
		C\,\|\mathcal{E}\,[x_2]^{\eta}(u\cdot\nabla u)\|_{L^2},
	\end{equation}
	and
	\begin{equation}\label{pressure_lem_result_equ2}
		\|\mathcal{E}\,[x_2]^{\eta}\nabla\partial_i p\|_{L^2}
		\le
		C\,\|\mathcal{E}\,[x_2]^{\eta}\partial_i(u\cdot\nabla u)\|_{L^2},
		\qquad i=1,2.
	\end{equation}
\end{lem}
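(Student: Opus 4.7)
Plan: Since $\mathcal{E}=e^{\nu\Lambda_1^{2s}(t-\tau)}$ is a Fourier multiplier in $\xi_1$ alone, it commutes with multiplication by the weight $[x_2]^{\eta}$ (a function of $x_2$ only), with $\Delta^{-1}$, with every partial derivative $\partial_i$, and with the Riesz transforms $\mathcal{R}_i$. Setting $P:=\mathcal{E}p$ and $N:=\mathcal{E}(u\cdot\nabla u)$, the pressure equation $-\Delta p=\operatorname{div}(u\cdot\nabla u)$ descends to $-\Delta P=\operatorname{div}N$. It therefore suffices to prove
\[
\|[x_2]^{\eta}\nabla P\|_{L^2}\le C\|[x_2]^{\eta}N\|_{L^2}
\]
for this elliptic equation, and a completely analogous argument will give (\ref{pressure_lem_result_equ2}) from $-\Delta(\partial_i P)=\partial_i\operatorname{div}N$.

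The natural step is a weighted energy estimate: multiply $-\Delta P=\operatorname{div}N$ by $[x_2]^{2\eta}P$ and integrate over $\mathbb{R}^2$. Integrating by parts on both sides produces the identity
\[
\|[x_2]^{\eta}\nabla P\|_{L^2}^2=-\int_{\mathbb{R}^2}N\cdot\nabla P\,[x_2]^{2\eta}\,dx+\mathcal{C}(P),
\]
where $\mathcal{C}(P)$ collects the commutator terms that come from differentiating the weight, each of which carries a factor $\partial_2[x_2]^{2\eta}=2\eta\,\mathrm{sgn}(x_2)[x_2]^{2\eta-1}\chi_{|x_2|>1}$ plus a Dirac contribution at $|x_2|=1$. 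Cauchy-Schwarz and Young's inequality control the first term by $C\|[x_2]^{\eta}N\|_{L^2}^2+\tfrac12\|[x_2]^{\eta}\nabla P\|_{L^2}^2$, the latter being absorbed to the left. After absorption, $\mathcal{C}(P)$ reduces to a combination of $\|[x_2]^{\eta-1}P\|_{L^2}^2$ and the trace of $P$ on $\{|x_2|=1\}$.

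The core difficulty, and the heart of the proof, is that $[x_2]^{2\eta}$ is \emph{not} $A_2$ when $\eta>\tfrac12$, so Riesz transforms are not bounded on $L^2([x_2]^{2\eta}dx)$ directly. The key observation is that the lower-order weight $[x_2]^{2(\eta-1)}$ \emph{does} satisfy the admissible range of Lemma \ref{lem_bd_CZ}: indeed, $1/2<\eta<3/2$ is exactly the range in which $2(\eta-1)\in(-1,1)$. Writing $P=\mathcal{E}\mathcal{R}_i\mathcal{R}_j(u_iu_j)$ (from $-\Delta p=\partial_i\partial_j(u_iu_j)$, using $\nabla\cdot u=0$), Lemma \ref{lem_bd_CZ} yields
\[
\|[x_2]^{\eta-1}P\|_{L^2}\le C\|[x_2]^{\eta-1}\mathcal{E}(u_iu_j)\|_{L^2},
\]
and this lower-order contribution is absorbed into the right-hand side by either Lemma \ref{weigh_poin_inequ} (when $1<\eta<3/2$, to trade $[x_2]^{\eta-1}$ for $[x_2]^{\eta}\partial_2 P$ and bury it in $\|[x_2]^{\eta}\nabla P\|_{L^2}$) or by the pointwise bound $[x_2]^{\eta-1}\le 1$ on $\{|x_2|>1\}$ when $1/2<\eta\le 1$. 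The trace terms at $|x_2|=1$ are handled by the standard trace inequality on the slab $\{1\le|x_2|\le 2\}$, again using the just-established weighted control of $P$ and $\nabla P$ on that bounded set.

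The main obstacle, then, is precisely the non-$A_2$ nature of the target weight: the whole scheme is arranged so that every application of Lemma \ref{lem_bd_CZ} is made at the safer weight $[x_2]^{2(\eta-1)}$, and the lost derivative is recovered through the integration-by-parts step and the weighted Poincaré inequality. For the inequality (\ref{pressure_lem_result_equ2}), the same scheme applies to $-\Delta(\partial_i P)=\partial_i\operatorname{div}N$: the lower-order term becomes $\|[x_2]^{\eta-1}\partial_i P\|_{L^2}$, and the representation $\partial_i P=\mathcal{E}\mathcal{R}_j\mathcal{R}_k\partial_i(u_ju_k)$ keeps us within the admissible range of Lemma \ref{lem_bd_CZ} throughout.
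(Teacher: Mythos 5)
Your skeleton matches the paper's: a weighted integration by parts reduces the claim to controlling the lower-order term $\|[x_2]^{\eta-1}\mathcal{E}p\|_{L^2}$, and you correctly identify that the whole point of the range $\tfrac12<\eta<\tfrac32$ is that $2(\eta-1)\in(-1,1)$, so Lemma~\ref{lem_bd_CZ} is applicable at the \emph{shifted} weight even though $[x_2]^{2\eta}$ itself is not $A_2$. (You are also more careful than the paper about the distributional second derivative of the truncated weight at $|x_2|=1$.) However, there is a genuine gap in how you close the lower-order term. You write $P=\mathcal{E}\mathcal{R}_i\mathcal{R}_j(u_iu_j)$ and apply Lemma~\ref{lem_bd_CZ} to land on $\|[x_2]^{\eta-1}\mathcal{E}(u_iu_j)\|_{L^2}$ — a quantity involving $u\otimes u$ with \emph{no} derivative — and neither of your two proposed continuations converts this into the stated right-hand side $\|[x_2]^{\eta}\mathcal{E}(u\cdot\nabla u)\|_{L^2}$. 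For $\tfrac12<\eta\le1$, the pointwise bound $[x_2]^{\eta-1}\le1$ leaves you with $\|\mathcal{E}(u_iu_j)\|_{L^2}$, which is simply a different quantity from $\|[x_2]^{\eta}\mathcal{E}(u\cdot\nabla u)\|_{L^2}$ and cannot be bounded by it. For $1<\eta<\tfrac32$, applying Lemma~\ref{weigh_poin_inequ} to $P$ itself and ``burying'' the result in $\|[x_2]^{\eta}\nabla P\|_{L^2}$ is an absorption into the left-hand side of a term carrying the coefficient $\eta(2\eta-1)$ times the (not small) Poincar\'e constant; there is no smallness available, so the absorption fails.

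The missing idea is the paper's representation
\[
p=(-\Delta)^{-1}\,\mathrm{div}\,\partial_2\int_{-\infty}^{x_2}(u\cdot\nabla u)(\cdot,y_2)\,dy_2 ,
\]
which keeps the full nonlinearity $u\cdot\nabla u$ intact while still exhibiting $p$ as a zeroth-order composition of Riesz transforms, namely $(-\Delta)^{-1}\mathrm{div}\,\partial_2$, applied to the $x_2$-antiderivative. Lemma~\ref{lem_bd_CZ} at the weight $[x_2]^{2\eta-2}$ then gives $\|[x_2]^{\eta-1}\mathcal{E}p\|_{L^2}\le C\|[x_2]^{\eta-1}\mathcal{E}\int_{-\infty}^{x_2}(u\cdot\nabla u)\,dy_2\|_{L^2}$, and the weighted Poincar\'e inequality (Lemma~\ref{weigh_poin_inequ}) applied to the antiderivative trades the extra power of the weight for exactly the derivative needed, producing $C\|[x_2]^{\eta}\mathcal{E}(u\cdot\nabla u)\|_{L^2}$. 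Without this device (or an equivalent one), the lower-order term cannot be closed in terms of the stated right-hand side, so your argument as written does not prove the lemma.
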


	\begin{proof}
    Let us prove (\ref{pressure_lem_result_equ1}) first.
		By integration by parts, we obtain
		\begin{align}
			\|\mathcal{E} [x_2]^{\eta} \nabla p\|_{L^2}^2
			=& \int_{\mathbb{R}^2} [x_2]^{2\eta} (\mathcal{E} \nabla p)
			\cdot (\mathcal{E} \nabla p) dx \nonumber\\
			=& \int_{\mathbb{R}^2} [x_2]^{2\eta} (\mathcal{E}  p)
			\cdot (\mathcal{E} (-\Delta) p) dx
			+ \eta(2\eta-1)\int_{\mathbb{R}^2} [x_2]^{2\eta-2} (\mathcal{E}  p)^2  dx. \label{pressure_lem_proof1}
		\end{align}
		Notice that the pressure term in system \eqref{NS} satisfies the elliptic equation
		\vspace{-0.3cm}
		\begin{equation} \label{pressure_lem_proof1.5}
	-\Delta p = \div(u \cdot \nabla u).		\vspace{-0.2cm}
\end{equation}
	Substituting \eqref{pressure_lem_proof1.5} into \eqref{pressure_lem_proof1}, we obtain	\vspace{-0.2cm}
		\begin{equation}
			\|\mathcal{E} [x_2]^{\eta} \nabla p\|_{L^2}^2
			= \int_{\mathbb{R}^2} [x_2]^{2\eta} (\mathcal{E}  p)
			(\mathcal{E} \div (u\cdot \nabla u)) dx
			+ \eta(2\eta-1) \| [x_2]^{\eta-1} \mathcal{E} p\|_{L^2}^2. \label{pressure_lem_proof1.6}
		\end{equation}
		Applying integration by parts again, one can deduce
		\begin{align*}
			\int_{\mathbb{R}^2} [x_2]^{2\eta} (\mathcal{E}  p)
			(\mathcal{E} \div (u\cdot \nabla u)) dx
			=&
			-2\eta\int_{\mathbb{R}^2} [x_2]^{2\eta-1} (\mathcal{E}  p)
			(\mathcal{E}  (u\cdot \nabla u)) dx \\
			&-\int_{\mathbb{R}^2} [x_2]^{2\eta} (\mathcal{E} \nabla  p)\cdot
			(\mathcal{E}  (u\cdot \nabla u)) dx. 
		\end{align*}
		Using Cauchy-Schwarz inequality, we get
		\begin{align}
			&\left|\int_{\mathbb{R}^2} [x_2]^{2\eta} (\mathcal{E}  p)
			(\mathcal{E} \div (u\cdot \nabla u)) dx\right|\nonumber\\
			\le& 2\eta \|[x_2]^{\eta-1} \mathcal{E} p \|_{L^2}
			\|[x_2]^{\eta} \mathcal{E} (u\cdot \nabla u)\|_{L^2}+ \|[x_2]^\eta (\mathcal{E} \nabla  p)\|_{L^2}
			\|[x_2]^\eta (\mathcal{E} (u \cdot \nabla u))\|_{L^2}\nonumber\\
			\le & \eta \|[x_2]^{\eta-1} \mathcal{E} p \|_{L^2}^2
			+ (\eta+\frac{1}{2}) \|[x_2]^{\eta} \mathcal{E} (u\cdot \nabla u)\|_{L^2}^2 + \frac{1}{2}  \|[x_2]^\eta (\mathcal{E} \nabla  p)\|_{L^2}^2. \label{pressure_lem_proof2}
		\end{align}
		Combining (\ref{pressure_lem_proof1.6}) and (\ref{pressure_lem_proof2}), we can deduce
		\begin{align}
			\|\mathcal{E} [x_2]^{\eta} \nabla p\|_{L^2}^2 \le 
			4\eta^2 \| [x_2]^{\eta-1} \mathcal{E} p\|_{L^2}^2
			+
			(2\eta+1) \|[x_2]^{\eta} \mathcal{E} (u\cdot \nabla u)\|_{L^2}^2. \label{pressure_lem_proof2.1}
		\end{align}
		It follows from (\ref{pressure_lem_proof1.5}) that
		\begin{align*}
			\|[x_2]^{\eta-1} \mathcal{E} p \|_{L^2}^2 = 
			\|[x_2]^{\eta-1} \mathcal{E} (-\Delta)^{-1} \div \partial_2 \int_{-\infty}^{x_2}(u \cdot \nabla u)(y_2)dy_2 \|_{L^2}^2.
		\end{align*}
		Since $\frac{1}{2}<\eta<\frac{3}{2}$, we can use Lemma \ref{lem_bd_CZ} and Lemma \ref{weigh_poin_inequ} to get
		\begin{align}
			\|[x_2]^{\eta-1} \mathcal{E} p \|_{L^2}^2 = &
			\|[x_2]^{\eta-1} \mathcal{E} (-\Delta)^{-1} \div \partial_2 \int_{-\infty}^{x_2}(u \cdot \nabla u)(y_2)dy_2 \|_{L^2}^2\nonumber\\
			\le & C\|[x_2]^{\eta-1} \mathcal{E}  \int_{-\infty}^{x_2}(u \cdot \nabla u)(y_2)dy_2 \|_{L^2}^2 \nonumber\\
			\le & C\|[x_2]^{\eta} \mathcal{E}  (u \cdot \nabla u) \|_{L^2}^2. \label{pressure_lem_proof2.2}
		\end{align}
Combining \eqref{pressure_lem_proof2.1} and \eqref{pressure_lem_proof2.2}, we complete
the proof of \eqref{pressure_lem_result_equ1}.
The proof of \eqref{pressure_lem_result_equ2} follows analogously by replacing
$p$ with $\partial_i p$ in the above argument.
\end{proof}

\subsection{The decay of $u$ with weight}\label{subsec:52}

\noindent The weighted solution can be expressed as:
\begin{align*}
[x_2] ^{\frac{3\gamma+4}{7}}  u(t)=&e^{\nu\Lambda_1^{2s}t} [x_2] ^{\frac{3\gamma+4}{7}}    u_{0}+\int_{0}^{t}e^{\nu\Lambda_1^{2s}(t - \tau)} [x_2] ^{\frac{3\gamma+4}{7}}   (u\cdot\nabla u)(\tau)d\tau \\
&+ \int_{0}^{t}e^{\nu\Lambda_1^{2s}(t - \tau)} [x_2] ^{\frac{3\gamma+4}{7}} \nabla p ~d\tau.
\end{align*}
\noindent
For $u_1$, taking the $L^2$-norm and using Lemma \ref{lem_p}:
\begin{align}
\|[x_2] ^{\frac{3\gamma+4}{7}}   u(t)\|_{L^2}
\le \|e^{\nu \Lambda_1^{2s}t}[x_2] ^{\frac{3\gamma+4}{7}}   u_0\|_{L^2} + C\int_0^t \|e^{\nu \Lambda_1^{2s}(t-\tau)}[x_2] ^{\frac{3\gamma+4}{7}}   (u \cdot \nabla u)(\tau)\|_{L^2} d\tau. \label{6-1-1}
\end{align}
The linear term admits a straightforward decay estimate:
\begin{align*}
\|e^{\nu \Lambda_1^{2s}t}[x_2] ^{\frac{3\gamma+4}{7}}   u_0\|_{L^2} &\leq C(1 + \nu t)^{-\frac{\sigma}{2s}} \left(\|\Lambda_1^{-\sigma} [x_2] ^{\frac{3\gamma+4}{7}}   u_0\|_{L^2} + \|[x_2] ^{\frac{3\gamma+4}{7}}   u_0\|_{L^2}\right) \nonumber\\
&\leq C\varepsilon (1 + \nu t)^{-\frac{\sigma}{2s}}
\leq \frac{C_0}{4}\varepsilon (1 + \nu t)^{-\frac{\sigma}{2s}},
\end{align*}
where we have chosen $C_0$ satisfying $C_0 \ge 4C$. For the nonlinear terms, we can deduce
\begin{align*}
&\int_0^t \|e^{\nu \Lambda_1^{2s}(t-\tau)} [x_2] ^{\frac{3\gamma+4}{7}}   (u \cdot \nabla u)(\tau)\|_{L^2} d\tau \\
\le& \int_{0}^{t} \|e^{\nu \Lambda_1^{2s}(t-\tau)} [x_2] ^{\frac{3\gamma+4}{7}}   (u_1 \partial_1 u_1)(\tau)\|_{L^2} d\tau + \int_{0}^{t} \|e^{\nu \Lambda_1^{2s}(t-\tau)} [x_2] ^{\frac{3\gamma+4}{7}}   (u_2 \partial_2 u_1)(\tau)\|_{L^2} d\tau\\
&+ \int_{0}^{t} \|e^{\nu \Lambda_1^{2s}(t-\tau)} [x_2] ^{\frac{3\gamma+4}{7}}   (u_1 \partial_1 u_2)(\tau)\|_{L^2} d\tau + \int_{0}^{t} \|e^{\nu \Lambda_1^{2s}(t-\tau)} [x_2] ^{\frac{3\gamma+4}{7}}   (u_2 \partial_2 u_2)(\tau)\|_{L^2} d\tau\\
:=&I^w_1 + I^w_2 + I^w_3 + I^w_4.
\end{align*}
We first deal with $I^w_1$, $I^w_3$ and $I^w_4$. It follows from Lemma \ref{lem4} and divergence-free condition that
\begin{align}
    I^w_1 + I^w_3 + I^w_4 
    \le& C \int_0^{t} (t-\tau)^{-\frac{1}{4s}} \big\|[x_2] ^{\frac{3\gamma+4}{7}}  \|(u_1 \partial_1 u_1)(\tau)\|_{L^1_{x_1}}\big\|_{L^2_{x_2}}d\tau \nonumber\\
    & + C \int_0^{t} (t-\tau)^{-\frac{1}{4s}} \big\|[x_2] ^{\frac{3\gamma+4}{7}}  \|(u_1 \partial_1 u_2)(\tau)\|_{L^1_{x_1}}\big\|_{L^2_{x_2}}d\tau \nonumber\\
    &+ C \int_0^{t} (t-\tau)^{-\frac{1}{4s}}
    \big\|[x_2] ^{\frac{3\gamma+4}{7}}  \|(u_2 \partial_1 u_1)(\tau)\|_{L^1_{x_1}}\big\|_{L^2_{x_2}}d\tau. \label{decayuweightest1}
\end{align}
Applying Lemma \ref{multi one}, we derive
\begin{align}
    \|[x_2] ^{\frac{3\gamma+4}{7}}  (u_1 \partial_1 u_1)(\tau)\|_{L^1_{x_1} L^2_{x_2}}
    \le& C\| u_1\|_{L^2}^\frac{1}{2} \|\partial_2 u_1\|_{L^2}^\frac{1}{2}
    \|[x_2] ^{\frac{3\gamma+4}{7}} \partial_1 u_1\|_{L^2}. \label{decayuweightest2}\\
    \|[x_2] ^{\frac{3\gamma+4}{7}}  (u_1 \partial_1 u_2)(\tau)\|_{L^1_{x_1} L^2_{x_2}}
    \le& C\| u_1\|_{L^2}^\frac{1}{2} \|\partial_2 u_1\|_{L^2}^\frac{1}{2}
    \|[x_2] ^{\frac{3\gamma+4}{7}} \partial_1 u_2\|_{L^2}. \label{decayuweightest2+}\\
    \|[x_2] ^{\frac{3\gamma+4}{7}}  (u_2 \partial_1 u_1)(\tau)\|_{L^1_{x_1} L^2_{x_2}}
    \le& C\| u_2\|_{L^2}^\frac{1}{2} \|\partial_2 u_2\|_{L^2}^\frac{1}{2}
    \|[x_2] ^{\frac{3\gamma+4}{7}} \partial_1 u_1\|_{L^2}. \label{decayuweightest3}
\end{align}
Similarly, we apply Lemma \ref{lem4} to $I^w_2$ and obtain
\begin{align}
    I^w_2 \le C \int_0^{t} (t-\tau)^{-\frac{1}{4s}} \big\|[x_2] ^{\frac{3\gamma+4}{7}}  \|(u_2 \partial_2 u_1)(\tau)\|_{L^1_{x_1}}\big\|_{L^2_{x_2}}. \label{decayuweightest4}
\end{align}
From the divergence-free condition of $u$, it follows that
\begin{align}
    \|[x_2] ^{\frac{3\gamma+4}{7}}  (u_2 \partial_2 u_1)(\tau)\|_{L^1_{x_1} L^2_{x_2}}
    \le& C\|[x_2] ^{\frac{-2\gamma+2}{7}} u_2\|_{L^2_{x_1} L^\infty_{x_2}}
    \|[x_2] ^{\frac{5\gamma+2}{7}}  (\partial_2 u_1)\|_{L^2}.\label{decayuweightest5}
\end{align}
By using Lemma \ref{lem_wGN} and the range of $\gamma$ ($0<\gamma<\frac{3}{10}$) that
\begin{align}
    \|[x_2] ^{\frac{-2\gamma+2}{7}} u_2\|_{L^2_{x_1} L^\infty_{x_2}}
    \le& C\|[x_2] ^{\frac{-2\gamma+2}{7}-\frac{1}{2}} u_2\|_{L^2}
    +C \|[x_2] ^{\gamma} u_2\|_{L^2}^\frac{1}{2}
    \|[x_2] ^{\frac{-11\gamma+4}{7}} \partial_2 u_2\|_{L^2}^{\frac{1}{2}} \nonumber\\
    \le& C\|[x_2] ^{\gamma} u_2\|_{L^2}
    +C \|[x_2] ^{\gamma} u_2\|_{L^2}^\frac{1}{2}
    \|[x_2] ^{\frac{3\gamma+4}{7}} \partial_1 u_1\|_{L^2}^{\frac{1}{2}}. \label{decayuweightest6}
\end{align}
Combining (\ref{decayuweightest1})--(\ref{decayuweightest6}) and substituting the decay rates in (\ref{62})--(\ref{66}), we have
\begin{align*}
\int_0^t \|e^{\nu \Lambda_1^{2s}(t-\tau)} [x_2] ^{\frac{3\gamma+4}{7}}  (u \cdot \nabla u)(\tau)\|_{L^2} d\tau 
\le  C C_0^2 \varepsilon^2 \int_0^{t} (t-\tau)^{-\frac{1}{4s}} (\tau+1)^{-\frac{2\sigma+1}{2s}} d\tau.
\end{align*}
Applying Lemma \ref{decay lem}, we obtain
\begin{align}
\int_0^t \|e^{\nu \Lambda_1^{2s}(t-\tau)} [x_2] ^{\frac{3\gamma+4}{7}}  (u \cdot \nabla u)(\tau)\|_{L^2} d\tau 
\le C C_0^2 \varepsilon^2 (t+1)^{-\frac{\sigma}{2s}}. \label{6-1-2}
\end{align}
Combining (\ref{6-1-1}) and (\ref{6-1-2}), and taking $\varepsilon$ sufficiently small, one can derive the bootstrap conclusion (\ref{6_2}).
\subsection{The decay of $u_2$ with weight}
By multiplying $[x_2]^\gamma$ on both side of (\ref{eq of u2}), and then applying Duhamel's principle, we obtain
\begin{align*}
\|[x_2] ^\gamma u_2(t)\|_{L^2} \leq \|e^{\nu \Lambda_1^{2s}t} \partial_1 [x_2] ^\gamma\psi_0\|_{L^2} + \int_0^t \|e^{\nu \Lambda_1^{2s}(t-\tau)}\partial_1 [x_2] ^\gamma \Delta^{-1} \nabla \times (u \cdot \nabla u)(\tau)\|_{L^2} d\tau. 
\end{align*}
The linear term decays as
\begin{align*}
\|e^{\nu \Lambda_1^{2s}t}\partial_1 [x_2] ^\gamma \psi_0\|_{L^2} 
&\leq C(1 + \nu t)^{-\frac{\sigma+1}{2s}} \big(\|\Lambda_1^{-\sigma} [x_2] ^\gamma \psi_0\|_{L^2} + \| [x_2] ^\gamma u_{2,0}\|_{L^2}\big)\nonumber\\
&\leq C\varepsilon (1 + \nu t)^{-\frac{\sigma+1}{2s}}
\leq \frac{C_0}{4}\varepsilon (1 + \nu t)^{-\frac{\sigma+1}{2s}}. 
\end{align*}
For the nonlinear term, we split it into four parts:
\begin{align}
&\int_0^t \|e^{\nu \Lambda_1^{2s}(t-\tau)}\partial_1 [x_2] ^\gamma \Delta^{-1} \nabla \times (u \cdot \nabla u)(\tau)\|_{L^2} d\tau \nonumber\\
=&\int_0^t \|e^{\nu \Lambda_1^{2s}(t-\tau)}\partial_1 [x_2] ^\gamma \Delta^{-1} \nabla \times \div (u \otimes u)(\tau)\|_{L^2} d\tau \nonumber\\
\le& \int_0^t\|e^{\nu \Lambda_1^{2s}(t-\tau)}\partial_1 [x_2] ^\gamma\partial_1\partial_2 \Delta^{-1} (u_1 u_1)(\tau)\|_{L^2} d\tau \nonumber\\
&+ \int_0^t\|e^{\nu \Lambda_1^{2s}(t-\tau)}\partial_1 [x_2] ^\gamma \partial_1^2\Delta^{-1}  (u_1 u_2)(\tau)\|_{L^2} d\tau \nonumber\\
&+ \int_0^t\|e^{\nu \Lambda_1^{2s}(t-\tau)}\partial_1 [x_2] ^\gamma \Delta^{-1} \partial_2^2 (u_2 u_1)(\tau)\|_{L^2} d\tau \nonumber\\
&+ \int_0^t\|e^{\nu \Lambda_1^{2s}(t-\tau)}\partial_1 [x_2] ^\gamma \Delta^{-1} \partial_1 \partial_2 (u_2 u_2)(\tau)\|_{L^2} d\tau \nonumber\\
:=& II_1^w + II_2^w + II_3^w + II_4^w. \label{u2decompose}
\end{align}
By fundamental theorem of calculus, we have
\begin{align}
    &\|[x_2] ^\gamma e^{\nu \Lambda_1^{2s}(t-\tau)}\partial_1^2\partial_2 \Delta^{-1} (u_1  u_1)\|_{L^2} \nonumber\\
    \le&
    \|[x_2] ^\gamma e^{\nu \Lambda_1^{2s}(t-\tau)}\partial_1^2\partial_2 \Delta^{-1} \partial_2 \int_{-\infty}^{x_2}(u_1  u_1)(y_2)d y_2\|_{L^2}. \label{u2w_1}
\end{align}
Noting that $[x_2] ^\gamma~(0<\gamma<\frac{3}{10})$ is an $A_2$ weight (the proof can be found in Appendix \ref{appA}, see the proof of Lemma \ref{lem_bd_CZ}), then we can employ Lemma \ref{lem_bd_CZ} to obtain
\begin{align}
    &\left\|[x_2] ^\gamma\partial_2^2 \Delta^{-1} e^{\nu \Lambda_1^{2s}(t-\tau)} \partial_1^2 \int_{-\infty}^{x_2}(u_1  u_1)(y_2)dy_2\right\|_{L^2} \nonumber\\
    \le& C \left\|[x_2] ^\gamma e^{\nu \Lambda_1^{2s}(t-\tau)} \partial_1^2 \int_{-\infty}^{x_2} (u_1  u_1)(y_2)dy_2\right\|_{L^2}.\label{u2w_2}
\end{align}
Applying Lemma \ref{weigh_poin_inequ}, we have
\begin{align}
    \left\|[x_2]^\gamma  e^{\nu \Lambda_1^{2s}(t-\tau)} \partial_1^2 \int_{-\infty}^{x_2} (u_1  u_1)(y_2) d y_2\right\|_{L^2} \le C \left\|[x_2] ^{\gamma+1} e^{\nu \Lambda_1^{2s}(t-\tau)} \partial_1^2 (u_1  u_1)\right\|_{L^2}. \label{u2w_3}
\end{align}
Combining (\ref{u2w_1}), (\ref{u2w_2}) and (\ref{u2w_3}), and using Lemma \ref{lem4}, we have
\begin{align*}
    II_{1}^w \le& \int_0^t (t-\tau)^{-\frac{3}{4s}} \big\|[x_2] ^{\gamma+1}\|\partial_1  (u_1  u_1)\|_{L^1_{x_1}}\big\|_{L_{x_2}^2} d\tau \\
    \le& C\int_0^t (t-\tau)^{-\frac{3}{4s}} \big\|[x_2] ^{\gamma+1}\|u_1 \partial_1 u_1\|_{L^1_{x_1}}\big\|_{L_{x_2}^2} d\tau. 
\end{align*}
By Lemma \ref{multi one}, we obtain
\begin{align}\label{zhesha}
    &\int_0^t (t-\tau)^{-\frac{3}{4s}} \big\|[x_2] ^{\gamma+1}\|u_1 \partial_1 u_1\|_{L^1_{x_1}}\big\|_{L_{x_2}^2} d\tau \nonumber\\
    \le&
    \int_0^t (t-\tau)^{-\frac{3}{4s}} 
    \|[x_2] ^{\frac{4\gamma+3}{7}}  u_1\|_{L^2_{x_1}L^\infty_{x_2}}
    \|[x_2] ^{\frac{3\gamma+4}{7}} \partial_1 u_1\|_{L^2}
      d\tau.
\end{align}
It follows from Lemma \ref{lem_wGN} that
\begin{align}
    \|[x_2] ^{\frac{4\gamma+3}{7}}  u_1\|_{L^2_{x_1}L^\infty_{x_2}}
    \le& C\|[x_2] ^{\frac{4\gamma+3}{7}-\frac{1}{2}}  u_1\|_{L^2}
    +C \|[x_2] ^{\frac{3\gamma+4}{7}}  u_1\|_{L^2}^\frac{1}{2}
    \|[x_2] ^{\frac{5\gamma+2}{7}}  \partial_2 u_1\|_{L^2}^\frac{1}{2} \nonumber\\
    \le& C\|[x_2] ^{\frac{4\gamma+3}{7}}  u_1\|_{L^2}
    +C \|[x_2] ^{\frac{3\gamma+4}{7}}  u_1\|_{L^2}^\frac{1}{2}
    \|[x_2] ^{\frac{5\gamma+2}{7}}  \partial_2 u_1\|_{L^2}^\frac{1}{2}. \label{zhesha1}
\end{align}
Substitute (\ref{zhesha1}) into (\ref{zhesha}), we obtain
\begin{align*}
    &\int_0^t (t-\tau)^{-\frac{3}{4s}} \big\|[x_2] ^{\gamma+1}\|u_1 \partial_1 u_1\|_{L^1_{x_1}}\big\|_{L_{x_2}^2} d\tau\\
    \le &C\int_0^t (t-\tau)^{-\frac{3}{4s}} 
    \Big(\|[x_2] ^{\frac{4\gamma+3}{7}}  u_1\|_{L^2}
    + \|[x_2] ^{\frac{3\gamma+4}{7}}  u_1\|_{L^2}^\frac{1}{2}
    \|[x_2] ^{\frac{5\gamma+2}{7}}  \partial_2 u_1\|_{L^2}^\frac{1}{2}\Big)
    \|[x_2] ^{\frac{3\gamma+4}{7}}\partial_1 u_1\|_{L^2}d\tau. 
\end{align*}
Since $0<\gamma<\frac{3}{10}$, we can deduce $\|[x_2] ^{\frac{4\gamma+3}{7}}  u_1\|_{L^2} \le \|[x_2] ^{\frac{3\gamma+4}{7}}  u_1\|_{L^2}$. 
By substituting the decay rates in (\ref{62}), (\ref{63}), (\ref{64}),  (\ref{65}), (\ref{66}), one can derive
\begin{align*}
    II_{1}^w \le &C\int_0^t (t-\tau)^{-\frac{3}{4s}} 
    \Big(\|[x_2] ^{\frac{3\gamma+4}{7}}  u_1\|_{L^2}
    + \|[x_2] ^{\frac{3\gamma+4}{7}}  u_1\|_{L^2}^\frac{1}{2}
    \|[x_2] ^{\frac{5\gamma+2}{7}}  \partial_2 u_1\|_{L^2}^\frac{1}{2}\Big)
    \|[x_2] ^{\frac{3\gamma+4}{7}}\partial_1 u_1\|_{L^2}d\tau\\
    \le& C\int_0^t (t-\tau)^{-\frac{3}{4s}}
    (1+\tau)^{-\frac{2\sigma+1}{2s}} d\tau. 
\end{align*}
Then by using Lemma \ref{decay lem}, we can conclude
\begin{align*}
    II_{1}^w \le 
        CC_0^2 \varepsilon^2 (1+t)^{-\frac{3}{4s}}\leq CC_0^2 \varepsilon^2 (1+t)^{-\frac{\sigma+1}{2s}}.
\end{align*}
Next, we turn to the term \(II_{2}^w\) in (\ref{u2decompose}). First, we apply Lemma \ref{lem_bd_CZ} to bound the Riesz operator $\partial_1^2 \Delta^{-1}$, and yield
\begin{align*}
    II_{2}^w \le C\int_0^t\|[x_2] ^\gamma e^{\nu \Lambda_1^{2s}(t-\tau)}\partial_1(u_1 u_2)(\tau)\|_{L^2} d\tau.
\end{align*}
Then we use Lemma \ref{lem4} to obtain
\begin{align*}
    \int_0^t\|[x_2] ^\gamma e^{\nu \Lambda_1^{2s}(t-\tau)}\partial_1(u_1 u_2)(\tau)\|_{L^2} d\tau \le C 
    \int_0^t (t-\tau)^{-\frac{3}{4s}}\big\|[x_2] ^\gamma \|(u_1 u_2)\|_{L_{x_1}^1}\big\|_{L_{x_2}^2} d\tau.
\end{align*}
Applying Lemma \ref{multi one}, we can deduce that
\begin{align*}
\int_0^t (t-\tau)^{-\frac{3}{4s}}\big\|[x_2] ^\gamma \|(u_1 u_2)\|_{L_{x_1}^1}\big\|_{L_{x_2}^2} d\tau 
\le 
C\int_0^t (t-\tau)^{-\frac{3}{4s}}\|[x_2] ^\gamma u_2\|_{L^2} \|u_1\|_{L^2}^\frac{1}{2} \|\partial_2 u_1\|_{L^2}^\frac{1}{2} d\tau.
\end{align*}
It follows from Lemma \ref{weight prop} that
\begin{align*}
    &\int_0^t (t-\tau)^{-\frac{3}{4s}}\|[x_2] ^\gamma u_2\|_{L^2} \|u_1\|_{L^2}^\frac{1}{2} \|\partial_2 u_1\|_{L^2}^\frac{1}{2} d\tau\\
    \le& C
    \int_0^t (t-\tau)^{-\frac{3}{4s}}\|[x_2] ^\gamma u_2\|_{L^2} \|[x_2] ^{\frac{3\gamma+4}{7}} u_1\|_{L^2}^\frac{1}{2} \|[x_2] ^{\frac{5\gamma+2}{7}} \partial_2 u_1\|_{L^2}^\frac{1}{2} d\tau.
\end{align*}
Substituting the decay rates given in (\ref{62})--(\ref{66}) into the above expression, we obtain
\begin{align*}
    &\int_0^t (t-\tau)^{-\frac{3}{4s}}\|[x_2] ^\gamma u_2\|_{L^2} \|[x_2] ^{\frac{3\gamma+4}{7}} u_1\|_{L^2}^\frac{1}{2} \|[x_2] ^{\frac{5\gamma+2}{7}} \partial_2 u_1\|_{L^2}^\frac{1}{2} d\tau\\
    \le& CC_0^2\varepsilon^2
    \int_0^t (t-\tau)^{-\frac{3}{4s}}(1+\tau)^{\frac{2\sigma+1}{2s}} d\tau.
\end{align*}
Summing up the estimates above and using Lemma \ref{decay lem}, we conclude
\begin{align*}
    II_{2}^w \le CC_0^2\varepsilon^2 (1+t)^{-\frac{\sigma+1}{2s}}.
\end{align*}
Following a similar process with estimation of $II_{2}^w$, we can deduce
\begin{align*}
II_3^w \le& \int_0^t\|[x_2] ^\gamma e^{\nu \Lambda_1^{2s}(t-\tau)}\partial_1  \Delta^{-1} \partial_2^2(u_2  u_1)(\tau)\|_{L^2} d\tau \\
\le& C\int_0^t\|[x_2] ^\gamma e^{\nu \Lambda_1^{2s}(t-\tau)}\partial_1 (u_2  u_1)(\tau)\|_{L^2} d\tau \\
\le & C\int_0^t (t-\tau)^{-\frac{3}{4s}}\|[x_2] ^\gamma (u_2  u_1)(\tau)\|_{L_{x_1}^1 L_{x_2}^2} d\tau \\
\le & C\int_0^t (t-\tau)^{-\frac{3}{4s}}\|[x_2] ^\gamma u_2\|_{L^2}  \|u_1\|_{L^2}^\frac{1}{2} \|\partial_2 u_1\|_{L^2}^\frac{1}{2} d\tau \\
\le& C C_0^2 \varepsilon^2 \int_0^{t} (t-\tau)^{-\frac{3}{4s}} (\tau+1)^{-\frac{2\sigma+1}{2s}} d\tau\\
\le& C C_0^2 \varepsilon^2 (t+1)^{-\frac{\sigma+1}{2s}},  
\end{align*}
and
\begin{align*}
II_4^w \le& \int_0^t\|[x_2] ^\gamma e^{\nu \Lambda_1^{2s}(t-\tau)}\partial_1  \Delta^{-1} \partial_1\partial_2(u_2  u_2)(\tau)\|_{L^2} d\tau \\
\le& C\int_0^t\|[x_2] ^\gamma e^{\nu \Lambda_1^{2s}(t-\tau)}\partial_1 (u_2  u_2)(\tau)\|_{L^2} d\tau \\
\le & C\int_0^t (t-\tau)^{-\frac{3}{4s}}\|[x_2] ^\gamma (u_2  u_2)(\tau)\|_{L_{x_1}^1 L_{x_2}^2} d\tau \\
\le & C\int_0^t (t-\tau)^{-\frac{3}{4s}}\|[x_2] ^\gamma u_2\|_{L^2}  \|u_2\|_{L^2}^\frac{1}{2} \|\partial_2 u_2\|_{L^2}^\frac{1}{2} d\tau \\
\le& C C_0^2 \varepsilon^2 \int_0^{t} (t-\tau)^{-\frac{3}{4s}} (\tau+1)^{-\frac{2\sigma+2}{2s}} d\tau\\
\le& C C_0^2 \varepsilon^2 (t+1)^{-\frac{\sigma+1}{2s}}.   
\end{align*}
Combining the estimates above, we can obtain the bootstrap conclusion (\ref{6_3}) by taking $\varepsilon$  sufficiently small.
\subsection{The decay of $\partial_2 u_1$ with weight}
\noindent Applying Duhamel's principle, we have
\begin{align*}
    \|[x_2] ^{\frac{5\gamma+2}{7}}  \partial_2 u_1(t)\|_{L^2} \leq&
    \|[x_2] ^{\frac{5\gamma+2}{7}}   e^{\nu \Lambda_1^{2s}t}  \partial_2 u_{0,1}\|_{L^2} + \int_0^t \|[x_2] ^{\frac{5\gamma+2}{7}}   e^{\nu \Lambda_1^{2s}(t-\tau)}  \partial_2 (u \cdot \nabla u_1)(\tau)\|_{L^2} d\tau\\
    &\quad+ \int_0^t \|[x_2] ^{\frac{5\gamma+2}{7}}   e^{\nu \Lambda_1^{2s}(t-\tau)}  \partial_1\partial_2p(\tau) \|_{L^2} d\tau.
\end{align*}
The range $0 < \gamma < \frac{3}{10}$ ensures that $[x_2]^{\frac{5\gamma+2}{7}}$ is an $A_2$ weight, which allows us to apply Lemma \ref{lem_bd_CZ} to obtain the boundedness of the Riesz operator:
\begin{align*}
    &\int_0^t \|[x_2] ^{\frac{5\gamma+2}{7}}   e^{\nu \Lambda_1^{2s}(t-\tau)}  \partial_1\partial_2 p(\tau)\|_{L^2} d\tau\\
    =& \int_0^t \|[x_2] ^{\frac{5\gamma+2}{7}}   e^{\nu \Lambda_1^{2s}(t-\tau)}  \partial_1\partial_2 (-\Delta)^{-1} \div(u \cdot \nabla u)(\tau)\|_{L^2} d\tau\\
    \le& C\int_0^t \|[x_2] ^{\frac{5\gamma+2}{7}}   e^{\nu \Lambda_1^{2s}(t-\tau)}  \partial_2 (u \cdot \nabla u)(\tau)\|_{L^2} d\tau.
\end{align*}
For the linear term involving initial data $\partial_2 u_0$, we can obtain the expected decay through Lemma \ref{lem2}.
\begin{align}
\|[x_2] ^{\frac{5\gamma+2}{7}}   e^{\nu \Lambda_1^{2s}t}  \partial_2 u_{0,1}\|_{L^2} &\leq C(1 + \nu t)^{-\frac{\sigma}{2s}} \left(\|[x_2] ^{\frac{5\gamma+2}{7}}   \Lambda_1^{-\sigma}  \partial_2 u_{0,1}\|_{L^2} + \|[x_2] ^{\frac{5\gamma+2}{7}}    \partial_2 u_{0,1}\|_{L^2}\right) \nonumber\\
&\leq \frac{C_0}{4} \varepsilon (1 + \nu t)^{-\frac{\sigma}{2s}}. \label{6-4-1}
\end{align}
Due to divergence-free condition, the nonlinear term can be expanded as
\begin{align*}
    &\int_0^t \|[x_2] ^{\frac{5\gamma+2}{7}}   e^{\nu \Lambda_1^{2s}(t-\tau)} \partial_2 (u \cdot \nabla u)(\tau)\|_{L^2} d\tau\\
    \leq& \int_0^t \|[x_2] ^{\frac{5\gamma+2}{7}}   e^{\nu \Lambda_1^{2s}(t-\tau)} (u_1\partial_1\partial_2u)(\tau)\|_{L^2} d\tau
    +\int_0^t \|[x_2] ^{\frac{5\gamma+2}{7}}   e^{\nu \Lambda_1^{2s}(t-\tau)} (u_2\partial_2^2u)(\tau)\|_{L^2} d\tau\\
    &+\int_0^t \|[x_2] ^{\frac{5\gamma+2}{7}}   e^{\nu \Lambda_1^{2s}(t-\tau)} (\partial_2 u_1\partial_1 u_2)(\tau)\|_{L^2} d\tau
    +\int_0^t \|[x_2] ^{\frac{5\gamma+2}{7}}   e^{\nu \Lambda_1^{2s}(t-\tau)} (\partial_2 u_2 \partial_2 u_2)(\tau)\|_{L^2} d\tau\\
    :=&III_{1}^w + III_{2}^w+III_{3}^w + III_{4}^w.
\end{align*}
It follows from Lemma \ref{multi one} and Lemma \ref{lem4} that
\begin{align*}
III_{1}^w 
\leq& C \int_{0}^{t} (t-\tau)^{-\frac{1}{4s}} \big\|[x_2] ^{\frac{5\gamma+2}{7}}   \| (u_1\partial_1\partial_2u)(\tau)\|_{L^1_{x_1}}\big\|_{L^2_{x_2}} d\tau\\
\leq& C \int_{0}^{t} (t-\tau)^{-\frac{1}{4s}} \|[x_2] ^{\frac{5\gamma+2}{7}}  u_1(\tau)\|_{L^2_{x_1} L^\infty_{x_2}}
\|\partial_1\partial_2 u(\tau)\|_{L^2} d\tau.
\end{align*}
It follows from Lemma \ref{lem_wGN} and $0<\gamma < \frac{3}{10}$ that
\begin{align*}
    \|[x_2] ^{\frac{5\gamma+2}{7}}  u_1(\tau)\|_{L^2_{x_1} L^\infty_{x_2}} 
    \le&C
    \|[x_2] ^{\frac{5\gamma+2}{7}-\frac{1}{2}}  u_1\|_{L^2}
    +
    C\|[x_2] ^{\frac{5\gamma+2}{7}} u_1\|_{L^2}^\frac{1}{2}
    \|[x_2] ^{\frac{5\gamma+2}{7}} \partial_2 u_1\|_{L^2}^\frac{1}{2}\\
    \le& C
    \|[x_2] ^{\frac{3\gamma+4}{7}}  u_1\|_{L^2}
    +
    C\|[x_2] ^{\frac{3\gamma+4}{7}} u_1\|_{L^2}^\frac{1}{2}
    \|[x_2] ^{\frac{5\gamma+2}{7}} \partial_2 u_1\|_{L^2}^\frac{1}{2}.
\end{align*}
By interpolation, for $0<\theta< \frac{1-2\sigma}{2\sigma+2}$, we have
\begin{align}\label{inter3}
    \|\partial_1\partial_2 u(\tau)\|_{L^2}
    \le \|\partial_1u\|_{L^2}^{1-\theta} \|\partial_1 \partial_2^{\frac{1}{\theta}} u\|_{L^2}^{\theta}.
\end{align}
Combining (\ref{inter3}) and the decay rate (\ref{62})--(\ref{66}), we obtain
\begin{align}
III^w_{1}\leq&  C (C_0\varepsilon)^2 \int_{0}^{t} (t-\tau)^{-\frac{1}{4s}} (1+\tau)^{-\frac{\sigma}{2s}} (1+\tau)^{-\frac{(\sigma+1)(1-\theta)}{2s}} d\tau.\label{decayp2u1est1}
\end{align}
Similarly, we can deduce from Lemma \ref{multi one} and Lemma \ref{lem4}  that
\begin{align*}
III_{2}^w
&\leq C \int_{0}^{t} (t-\tau)^{-\frac{1}{4s}} \|[x_2] ^{\frac{5\gamma+2}{7}} u_2(\tau)\|_{L_{x_1}^2 L_{x_2}^{\infty}}
\| \partial_2^2 u(\tau)\|_{L^2} d\tau.
\end{align*}
By applying Lemma \ref{lem_wGN} with $\zeta = \frac{5\gamma+2}{7}$, $\vartheta=\frac{2-2\gamma}{7}$, and combining with the Minkowski inequality and the divergence-free condition of $u$, we can deduce
\begin{align*}
    \big\| \|[x_2] ^{\frac{5\gamma+2}{7}} u_2(\tau)\|_{L_{x_1}^2}\big\|_{L^\infty_{x_2}}
    \le& C\|[x_2] ^{\frac{5\gamma+2}{7}-\frac{1}{2}} u_2\|_{L^2} 
    + C\|[x_2] ^{\gamma} u_2\|_{L^2}^\frac{1}{2}
    \|[x_2] ^{\frac{3\gamma+4}{7}} \partial_2 u_2\|_{L^2}^\frac{1}{2}\\
    \le &C\|[x_2] ^{\gamma} u_2\|_{L^2} 
    + C\|[x_2] ^{\gamma} u_2\|_{L^2}^\frac{1}{2}
    \|[x_2] ^{\frac{3\gamma+4}{7}} \partial_1 u_1\|_{L^2}^\frac{1}{2}.
\end{align*}
By employing the interpolation with $0<\theta< \frac{1-2\sigma}{2\sigma+2}$, we obtain
$$
\|\partial_2^2 u\|_{L^2} \le \|\partial_2 u\|_{L^2}^{1-\theta}
\|\partial_2^{1+\frac{1}{\theta}} u\|_{L^2}^\theta,
$$
and combining the decay rate (\ref{62})--(\ref{66}), we can get
\begin{align}
III^w_{2}\leq&  C (C_0\varepsilon)^2 \int_{0}^{t} (t-\tau)^{-\frac{1}{4s}} (1+\tau)^{-\frac{\sigma+1}{2s}} (1+\tau)^{-\frac{\sigma(1-\theta)}{2s}} d\tau.\label{decayp2u1est2}
\end{align}
Similarly, for $III_3^w$, by applying Lemma \ref{lem4} and H\"older inequality, we have
\begin{align*}
III_{3}^w 
&\leq C \int_{0}^{t} (t-\tau)^{-\frac{1}{4s}} \big\|[x_2] ^{\frac{5\gamma+2}{7}} \| (\partial_2 u_1 \partial_1 u_2)(\tau)\|_{L^1_{x_1}}\big\|_{L^2_{x_2}} d\tau\\
&\leq C \int_{0}^{t} (t-\tau)^{-\frac{1}{4s}} \|[x_2] ^{\frac{5\gamma+2}{7}} \partial_2 u_1(\tau)\|_{L^2} \|\partial_1 u_2(\tau)\|_{L^2_{x_1} L^\infty_{x_2}} d\tau.
\end{align*}
Applying interpolation to the second term in the integrand, we obtain
\begin{align*}
    \|\partial_1 u_2(\tau)\|_{L^2_{x_1} L^\infty_{x_2}} 
    \le&  \|\partial_1 u_2(\tau)\|_{L^2}^\frac{1}{2}
    \|\partial_1 \partial_2 u_2(\tau)\|_{L^2}^\frac{1}{2}\\
    \le& \|\partial_1 u_2(\tau)\|_{L^2}^\frac{1}{2}
    \|\partial_1  u_2(\tau)\|_{L^2}^\frac{1-\theta}{2}
    \|\partial_1\partial_2^{\frac{1}{\theta}}  u_2(\tau)\|_{L^2}^\frac{\theta}{2}.
\end{align*}
Then combining the decay rates (\ref{62})--(\ref{66}), we arrive at
\begin{align}
III^w_{3}\leq&  C (C_0\varepsilon)^2 \int_{0}^{t} (t-\tau)^{-\frac{1}{4s}} (1+\tau)^{-\frac{4\sigma+1}{4s}} (1+\tau)^{-\frac{(2\sigma+1)(1-\theta)}{4s}} d\tau.\label{decayp2u1est3}
\end{align}
For the term $III_4^w$, we again employ Lemma \ref{multi one} and Lemma \ref{lem4} to get
\begin{align*}
III_{4}^w 
&\leq C \int_{0}^{t} (t-\tau)^{-\frac{1}{4s}} \|[x_2] ^{\frac{5\gamma+2}{7}} \partial_2 u_2(\tau)\|_{L^2} 
\| \partial_2 u_2(\tau)\|_{L^2}^\frac{1}{2}
\| \partial_2^2 u_2(\tau)\|_{L^2}^\frac{1}{2}d\tau.
\end{align*}
In view of the divergence-free condition and Lemma \ref{weight prop}, we can deduce that
\begin{align*}
III_{4}^w 
&\leq C \int_{0}^{t} (t-\tau)^{-\frac{1}{4s}} \|[x_2] ^{\frac{3\gamma+4}{7}} \partial_1 u_1(\tau)\|_{L^2}^\frac{3}{2} 
\| \partial_1\partial_2 u_1(\tau)\|_{L^2}^\frac{1}{2}d\tau.
\end{align*}
Then, using the decay estimates (\ref{62})--(\ref{66}), we obtain
\begin{align}
III^w_{4}\leq&  C (C_0\varepsilon)^2 \int_{0}^{t} (t-\tau)^{-\frac{1}{4s}} (1+\tau)^{-\frac{3\sigma+3}{4s}} d\tau.\label{decayp2u1est4}
\end{align}
Comparing the decay rate of $III^w_{1}$--$III^w_{4}$, we can deduce
\begin{align*}
    III_{1}^w + III_{2}^w + III_{3}^w + III_{4}^w \le& C (C_0\varepsilon)^2 \int_{0}^{t} (t-\tau)^{-\frac{1}{4s}} (1+\tau)^{-\frac{\sigma}{2s}} (1+\tau)^{-\frac{(\sigma+1)(1-\theta)}{2s}} d\tau\\
    \le& C (C_0\varepsilon)^2 \int_{0}^{t} (t-\tau)^{-\frac{1}{4s}} (1+\tau)^{-\frac{2\sigma+1}{2s}} (1+\tau)^{\frac{\theta(\sigma+1)}{2s}} d\tau\\
    \le& C (C_0\varepsilon)^2 \int_{0}^{t} (t-\tau)^{-\frac{1}{4s}} (1+\tau)^{-1} (1+\tau)^{\frac{\theta(\sigma+1)}{2s}} d\tau.
\end{align*}
By applying Lemma \ref{decay lem} and leveraging the condition $\theta < \frac{1-2\sigma}{2\sigma+2}$, we obtain
\begin{align*}
    \int_{0}^{t} (t-\tau)^{-\frac{1}{4s}} (1+\tau)^{-1} (1+\tau)^{\frac{\theta(\sigma+1)}{2s}} d\tau
    \le (1+t)^{\frac{2\theta(\sigma+1)-1}{4s}} \le
    (1+t)^{-\frac{\sigma}{2s}}.
\end{align*}
Then combining with (\ref{6-4-1}), we can deduce the bootstrap conclusion (\ref{6_4}).
\subsection{The decay of $\partial_1 u$ with weight}
\noindent By Duhamel's principle and Lemma \ref{lem_p}, we have
\[ 
\|[x_2] ^{\frac{3\gamma+4}{7}} \partial_1 u(t)\|_{L^2} \leq \|[x_2] ^{\frac{3\gamma+4}{7}}  e^{\nu \Lambda_1^{2s}t} \partial_1 u_0\|_{L^2} + C\int_0^t \|[x_2] ^{\frac{3\gamma+4}{7}}  e^{\nu \Lambda_1^{2s}(t-\tau)} \partial_1 (u \cdot \nabla u)(\tau)\|_{L^2} d\tau.
\]
For the linear term involving initial data $\partial_1 u_0$, we can obtain the expected decay through Lemma \ref{lem4}:
\begin{align}
\|[x_2] ^{\frac{3\gamma+4}{7}}  e^{\nu \Lambda_1^{2s}t} \partial_1 u_0\|_{L^2} &\leq C(1 + \nu t)^{-\frac{\sigma+1}{2s}} \left(\|[x_2] ^{\frac{3\gamma+4}{7}}  \Lambda_1^{-\sigma} u_0\|_{L^2} + \|[x_2] ^{\frac{3\gamma+4}{7}}  \partial_1 u_0\|_{L^2}\right) \nonumber\\
&\leq \frac{C_0}{4} \varepsilon (1 + \nu t)^{-\frac{\sigma+1}{2s}}. \label{exp_par1_u0}
\end{align}
Applying Lemma \ref{multi one}, Lemma \ref{multi two} and Lemma \ref{lem4},
\begin{align*}
&\int_0^t \|[x_2] ^{\frac{3\gamma+4}{7}}  e^{\nu \Lambda_1^{2s}(t-\tau)} \partial_1 (u \cdot \nabla u)(\tau)\|_{L^2} d\tau\\
\leq& C \int_{0}^{t} (t-\tau)^{-\frac{3}{4s}}\big\|[x_2] ^{\frac{3\gamma+4}{7}}    \|(u \cdot \nabla u)(\tau)\|_{L_{x_1}^1} \big\|_{L_{x_2}^2} d\tau\\
\le& C \int_0^{t} (t-\tau)^{-\frac{3}{4s}} \|u_1(\tau)\|_{L^2}^{\frac{1}{2}}   \|\partial_2 u_1(\tau)\|_{L^2}^{\frac{1}{2}} \|[x_2] ^{\frac{3\gamma+4}{7}}  \partial_1 u_1(\tau)\|_{L^2} d\tau\\
&+ C \int_0^{t} (t-\tau)^{-\frac{3}{4s}}  \|[x_2] ^{\frac{-2\gamma+2}{7}} u_2\|_{L^2_{x_1} L^\infty_{x_2}} \|[x_2] ^{\frac{5\gamma+2}{7}} \partial_2 u_1(\tau)\|_{L^2} d\tau\\
&+ C \int_0^{t} (t-\tau)^{-\frac{3}{4s}}  \|u_1(\tau)\|_{L^2}^{\frac{1}{2}}   \|\partial_2 u_1(\tau)\|_{L^2}^{\frac{1}{2}} \|[x_2] ^{\frac{3\gamma+4}{7}} \partial_1 u_2(\tau)\|_{L^2} d\tau\\
&+ C \int_0^{t} (t-\tau)^{-\frac{3}{4s}}  \|u_2(\tau)\|_{L^2}^{\frac{1}{2}}   \|\partial_2 u_2(\tau)\|_{L^2}^{\frac{1}{2}}\|[x_2] ^{\frac{3\gamma+4}{7}} \partial_2 u_2(\tau)\|_{L^2} d\tau\\
:=& IV_1^w + IV_2^w + IV_3^w + IV_4^w.
\end{align*}
Applying Lemma \ref{lem_wGN} and the range of $\gamma$ ($0<\gamma<\frac{3}{10}$), we can obtain
\begin{align}
    \|[x_2] ^{\frac{-2\gamma+2}{7}} u_2\|_{L^2_{x_1} L^\infty_{x_2}}
    \le& C\|[x_2] ^{\frac{-2\gamma+2}{7}-\frac{1}{2}} u_2\|_{L^2}
    +C \|[x_2] ^{\gamma} u_2\|_{L^2}^\frac{1}{2}
    \|[x_2] ^{\frac{-11\gamma+4}{7}} \partial_2 u_2\|_{L^2}^{\frac{1}{2}} \nonumber\\
    \le& C\|[x_2] ^{\gamma} u_2\|_{L^2}
    +C \|[x_2] ^{\gamma} u_2\|_{L^2}^\frac{1}{2}
    \|[x_2] ^{\frac{3\gamma+4}{7}} \partial_1 u_1\|_{L^2}^{\frac{1}{2}}. \label{lemwgen1121}
\end{align}
Combining (\ref{lemwgen1121}) and the decay rate (\ref{62})--(\ref{66}), we can conclude 
\begin{align}
&IV_1^w + IV_2^w + IV_3^w + IV_4^w \nonumber\\
\le& C C_0^2 \varepsilon^2 \int_0^{t-1} (t-\tau)^{-\frac{3}{4s}} (\tau+1)^{-\frac{2\sigma+1}{2s}} d\tau 
\le C C_0^2 \varepsilon^2 (t+1)^{-\frac{\sigma+1}{2s}}. \label{6-6-2}
\end{align}
Combining (\ref{exp_par1_u0}) and (\ref{6-6-2}), we can obtain bootstrap conclusion (\ref{6_5}).
\subsection{The decay of $\partial_1 u_2$ with weight}\label{subsec:56}
\noindent By Duhamel's principle and taking the $L^2$-norm, we have
\begin{align*}
\|[x_2] ^\gamma \partial_1 u_2(t)\|_{L^2} \leq \|[x_2] ^\gamma e^{\nu \Lambda_1^{2s}t}\partial_1 u_{0,2}\|_{L^2} + \int_0^t \|[x_2] ^\gamma e^{\nu \Lambda_1^{2s}(t-\tau)}\partial_1^2 \Delta^{-1} \nabla \times (u \cdot \nabla u)(\tau)\|_{L^2} d\tau. 
\end{align*}
The linear term decays as
\begin{align*}
\|[x_2] ^\gamma e^{\nu \Lambda_1^{2s}t}\partial_1 u_{0,2}\|_{L^2} 
&\leq C(1 + \nu t)^{-\frac{\sigma+2}{2s}} \big(\|[x_2] ^\gamma \Lambda_1^{-\sigma} \psi_0\|_{L^2} + \|[x_2] ^\gamma \partial_1 u_{0,2} \|_{L^2}\big) \\
&\leq C\varepsilon (1 + \nu t)^{-\frac{\sigma+2}{2s}}
\leq \frac{C_0}{4}\varepsilon (1 + \nu t)^{-\frac{2\sigma+1}{2s}}. 
\end{align*}
Similar to (\ref{u2decompose}), we have
\begin{align*}
&\int_0^t \|e^{\nu \Lambda_1^{2s}(t-\tau)}\partial_1^2 [x_2] ^\gamma \Delta^{-1} \nabla \times (u \cdot \nabla u)(\tau)\|_{L^2} d\tau \\
=&\int_0^t \|e^{\nu \Lambda_1^{2s}(t-\tau)}\partial_1^2 [x_2] ^\gamma \Delta^{-1} \nabla \times \div (u \otimes u)(\tau)\|_{L^2} d\tau \\
\le& \int_0^t\|e^{\nu \Lambda_1^{2s}(t-\tau)}\partial_1^2 [x_2] ^\gamma\partial_1\partial_2 \Delta^{-1} (u_1 u_1)(\tau)\|_{L^2} d\tau \\
&+ \int_0^t\|e^{\nu \Lambda_1^{2s}(t-\tau)}\partial_1^2 [x_2] ^\gamma \partial_1^2\Delta^{-1}  (u_1 u_2)(\tau)\|_{L^2} d\tau \\
&+ \int_0^t\|e^{\nu \Lambda_1^{2s}(t-\tau)}\partial_1^2 [x_2] ^\gamma \Delta^{-1} \partial_2^2 (u_2 u_1)(\tau)\|_{L^2} d\tau \\
&+ \int_0^t\|e^{\nu \Lambda_1^{2s}(t-\tau)}\partial_1^2 [x_2] ^\gamma \Delta^{-1} \partial_1 \partial_2 (u_2 u_2)(\tau)\|_{L^2} d\tau \\
:=& V_1^w + V_2^w + V_3^w + V_4^w. 
\end{align*}
The term $V_1^w$ can be split into:
\begin{align}
    V_1^w =& \int_0^{t-1} \|e^{\nu \Lambda_1^{2s}(t-\tau)}\partial_1^2 [x_2] ^\gamma\partial_1\partial_2 \Delta^{-1} (u_1 u_1)(\tau)\|_{L^2} d\tau \nonumber\\
    &+\int_{t-1}^{t} \|e^{\nu \Lambda_1^{2s}(t-\tau)}\partial_1^2 [x_2] ^\gamma\partial_1\partial_2 \Delta^{-1} (u_1 u_1)(\tau)\|_{L^2} d\tau. \label{p1u2_est_2021}
\end{align}
And the first part in right-hand side of (\ref{p1u2_est_2021}) can be written as
\begin{align*}
    &\int_0^{t-1}\|e^{\nu \Lambda_1^{2s}(t-\tau)}\partial_1^2 [x_2] ^\gamma\partial_1\partial_2 \Delta^{-1} (u_1 u_1)(\tau)\|_{L^2} d\tau\\
    =& \int_0^{t-1}\|e^{\nu \Lambda_1^{2s}(t-\tau)}\partial_1^2 [x_2] ^\gamma\partial_1\partial_2^2 \Delta^{-1} \int_{-\infty}^{x_2}(u_1 u_1)(y_2) dy_2(\tau)\|_{L^2} d\tau.
\end{align*}
Similar to (\ref{u2w_2}) and (\ref{u2w_3}), we then use Lemma \ref{lem_bd_CZ} and subsequently Lemma \ref{weigh_poin_inequ} to get
\begin{align}
    &\int_0^{t-1}\|e^{\nu \Lambda_1^{2s}(t-\tau)}\partial_1^2 [x_2] ^\gamma\partial_1\partial_2^2 \Delta^{-1} \int_{-\infty}^{x_2}(u_1 u_1)(y_2) dy_2(\tau)\|_{L^2} d\tau\nonumber\\
    \le& C \int_0^{t-1}\|[x_2] ^{\gamma} e^{\nu \Lambda_1^{2s}(t-\tau)} \partial_1^3 \int_{-\infty}^{x_2}(u_1 u_1)(y_2) dy_2\|_{L^2}d\tau\nonumber\\
    \le& C \int_0^{t-1}\|[x_2] ^{\gamma+1} e^{\nu \Lambda_1^{2s}(t-\tau)} \partial_1^3 (u_1 u_1)\|_{L^2}d\tau. \label{fangzhaou2}
\end{align}
For the second part of (\ref{p1u2_est_2021}), by using Lemma \ref{lem_bd_CZ}, we can get
\begin{align}
    \int_{t-1}^{t} \|e^{\nu \Lambda_1^{2s}(t-\tau)}\partial_1^2 [x_2] ^\gamma\partial_1\partial_2 \Delta^{-1} (u_1 u_1)(\tau)\|_{L^2} d\tau \le C \int_{t-1}^{t} \|\partial_1^2 [x_2] ^\gamma (u_1 u_1)(\tau)\|_{L^2} d\tau. \label{p1u2_est_2022}
\end{align}
Applying Lemma \ref{lem4} to (\ref{fangzhaou2}) and substituting the resulting estimate in (\ref{p1u2_est_2022}) into (\ref{p1u2_est_2021}), we obtain
\begin{align*}
    V_{1}^w \le& C\int_0^{t-1} (t-\tau)^{-\frac{5}{4s}} \big\|[x_2] ^{\gamma+1}\|\partial_1  (u_1  u_1)\|_{L^1_{x_1}}\big\|_{L_{x_2}^2} d\tau + C\int_{t-1}^t \|[x_2] ^{\gamma} \partial_1^2 (u_1  u_1)\|_{L^2} d\tau\\
    :=&V_{11}^w + V_{12}^w.
\end{align*}
Using  H\"older inequality in $x_1$ and $x_2$ direction, we can obtain
\begin{align*}
    \big\|[x_2] ^{\gamma+1}\|\partial_1  (u_1  u_1)\|_{L^1_{x_1}}\big\|_{L_{x_2}^2} 
    \le& C
    \|[x_2] ^{\frac{4\gamma+3}{7}}  u_1\|_{L^2_{x_1}L^\infty_{x_2}}
    \|[x_2] ^{\frac{3\gamma+4}{7}} \partial_1 u_1\|_{L^2}.
\end{align*}
It follows from Lemma \ref{lem_wGN} and the range of $\gamma$ ($0<\gamma<\frac{3}{10}$) that
\begin{align}
    \|[x_2] ^{\frac{4\gamma+3}{7}}  u_1\|_{L^2_{x_1}L^\infty_{x_2}}
    \le& C\|[x_2] ^{\frac{4\gamma+3}{7}-\frac{1}{2}}  u_1\|_{L^2}
    +C \|[x_2] ^{\frac{3\gamma+4}{7}}  u_1\|_{L^2}^\frac{1}{2}
    \|[x_2] ^{\frac{5\gamma+2}{7}}  \partial_2 u_1\|_{L^2}^\frac{1}{2} \nonumber\\
    \le& C\|[x_2] ^{\frac{3\gamma+4
    }{7}}  u_1\|_{L^2}
    +C \|[x_2] ^{\frac{3\gamma+4}{7}}  u_1\|_{L^2}^\frac{1}{2}
    \|[x_2] ^{\frac{5\gamma+2}{7}}  \partial_2 u_1\|_{L^2}^\frac{1}{2}. \label{zhesha2}
\end{align}
Substituting (\ref{62}), (\ref{66}), we can handle $V_{11}^w$ as:
\begin{align*}
    V_{11}^w \le CC_0^2 \varepsilon^2\int_0^{t-1} (t-\tau)^{-\frac{5}{4s}} (1+\tau)^{-\frac{2\sigma+1}{2s}} d\tau.
\end{align*}
By Lemma \ref{decay lem}, we have
\begin{align*}
    V_{11}^w \le  CC_0^2 \varepsilon^2(1+t)^{-\frac{2\sigma+1}{2s}}.
\end{align*}
For $V_{12}^w$, by H\"older's inequality and the range of $\gamma$, we can deduce
\begin{align}
   \|[x_2] ^{\gamma} \partial_1^2 (u_1  u_1)\|_{L^2} 
    \le& C\|[x_2]^{\gamma} (\partial_1^2 u_1) u_1\|_{L^2} 
    + C\|[x_2]^{\gamma} (\partial_1 u_1)^2 \|_{L^2} \nonumber\\
    \le&  C\|[x_2]^{\gamma}  u_1\|_{L_{x_1}^\infty L_{x_2}^2} \|\partial_1^2 u_1\|_{L_{x_1}^2 L_{x_2}^\infty}
    +
     C\|[x_2]^{\gamma} \partial_1 u_1\|_{L^2} \|\partial_1 u_1\|_{L^\infty}\nonumber\\
     \le& C\|[x_2]^{\gamma}  u_1\|_{L^2}^\frac{1}{2}
     \|[x_2]^{\gamma}  \partial_1 u_1\|_{L^2}^\frac{1}{2}
     \|\partial_1^2 u_1\|_{L^2}^\frac{1}{2}
     \|\partial_1^2\partial_2 u_1\|_{L^2}^\frac{1}{2}\nonumber\\
    &+
     C\|[x_2]^{\gamma} \partial_1 u_1\|_{L^2} \|\partial_1 u_1\|_{L^2}^\frac{1}{2}\|\partial_1\Delta u_1\|_{L^2}^\frac{1}{2}\nonumber\\
     \le& C\|[x_2]^{\frac{3\gamma+4}{7}}  u_1\|_{L^2}^\frac{1}{2}
     \|[x_2]^{\frac{3\gamma+4}{7}}  \partial_1 u_1\|_{L^2}^\frac{1}{2}
     \|\partial_1^2 u_1\|_{L^2}^\frac{1}{2}
     \|\partial_1^2\partial_2 u_1\|_{L^2}^\frac{1}{2}\nonumber\\
    &+
     C\|[x_2]^{\frac{3\gamma+4}{7}} \partial_1 u_1\|_{L^2} \|\partial_1 u_1\|_{L^2}^\frac{1}{2}\|\partial_1\Delta u_1\|_{L^2}^\frac{1}{2}.\label{decayp1u2est1}
\end{align}
By interpolation, we have
\begin{align}
    \|\partial_1^2 u_1\|_{L^2}
    \|\partial_1^2\partial_2 u_1\|_{L^2}
    \le  \|\partial_1 u_1\|_{L^2} (\|\partial_1^3 u_1\|_{L^2} + \|\partial_1^3\partial_2^2 u_1\|_{L^2})
    \le  C_0^2 \varepsilon^2(1+t)^{-\frac{\sigma+1}{2s}}. \label{decayp1u2est2}
\end{align}
Substituting (\ref{decayp1u2est2}) into (\ref{decayp1u2est1}), and combining (\ref{61}), (\ref{62}) and (\ref{65}), we have
\begin{align*}
    V_{12}^w \le CC_0^2 \varepsilon^2 (1+t)^{-\frac{2\sigma+1}{2s}}.
\end{align*}
Since the Riesz operators are bounded in weighted $L^2$ space (Lemma \ref{lem_bd_CZ}), we can derive the estimates as follows:
\begin{align*}
    V^w_{2}+V^w_{3}+V^w_{4} \le& C \int_0^{t} \|[x_2] ^{\gamma} e^{\nu \Lambda_1^{2s}(t-\tau)}\partial_1^2(u_1 u_2)(\tau)\|_{L^2} d\tau\\
    &+C\int_0^t\|[x_2] ^\gamma e^{\nu \Lambda_1^{2s}(t-\tau)}\partial_1^2 (u_2  u_2)(\tau)\|_{L^2} d\tau.
\end{align*}
By using Lemma \ref{lem4}, we have
\begin{align*}
    V^w_2 +V^w_3+V^w_4 \le& C 
    \int_0^{t-1} (t-\tau)^{-\frac{5}{4s}}\big\|[x_2] ^\gamma \|(u_1 u_2)\|_{L_{x_1}^1}\big\|_{L_{x_2}^2} d\tau
    +C \int_{t-1}^t \|[x_2] ^\gamma \partial_1^2(u_1 u_2)\|_{L^2} d\tau\\
    &+C 
    \int_0^{t-1} (t-\tau)^{-\frac{5}{4s}}\big\|[x_2] ^\gamma \|(u_2 u_2)\|_{L_{x_1}^1}\big\|_{L_{x_2}^2} d\tau
    +C \int_{t-1}^t \|[x_2] ^\gamma \partial_1^2(u_2 u_2)\|_{L^2} d\tau\\
    :=& V^w_{a1} + V^w_{a2} + V^w_{b1} + V^w_{b2}.
\end{align*}
The first two terms can be handled by Lemma \ref{multi two} and Lemma \ref{decay lem},
\begin{align*}
V^w_{a1}\le &C\int_0^{t-1} (t-\tau)^{-\frac{5}{4s}}
\|[x_2] ^\gamma u_2\|_{L^2} \|u_1\|_{L^2}^\frac{1}{2} \|\partial_2 u_1\|_{L^2}^\frac{1}{2} d\tau \\
\le& C C_0^2 \varepsilon^2 \int_0^{t-1} (t-\tau)^{-\frac{5}{4s}} (\tau+1)^{-\frac{2\sigma+1}{2s}} d\tau\\
\le& C C_0^2 \varepsilon^2 (t+1)^{-\frac{2\sigma+1}{2s}},
\end{align*}
and
\begin{align*}
V^w_{b1}\le &C\int_0^{t-1} (t-\tau)^{-\frac{5}{4s}}
\|[x_2] ^\gamma u_2\|_{L^2} \|u_2\|_{L^2}^\frac{1}{2} \|\partial_2 u_2\|_{L^2}^\frac{1}{2} d\tau \\
\le& C C_0^2 \varepsilon^2 \int_0^{t-1} (t-\tau)^{-\frac{5}{4s}} (\tau+1)^{-\frac{2\sigma+2}{2s}} d\tau\\
\le& C C_0^2 \varepsilon^2 (t+1)^{-\frac{2\sigma+1}{2s}}. 
\end{align*}
The estimate of $V^w_{a2}$ and $V^w_{b2}$ are similar to the estimate of $V_{12}$ in (\ref{V122est01}), (\ref{V122est02}) and (\ref{V122est03}), which can be bounded by $C (C_0\varepsilon)^2 (1+t)^{-\frac{2\sigma+1}{2s}}$.

Combining the estimates above, we can deduce the bootstrap
conclusion (\ref{6_6}) by taking $\varepsilon$ sufficiently small.

\subsection[Estimate for L2 norm with negative derivative]{Estimate for $\|\Lambda_1^{-\sigma} u\|_{L^2}$}\label{subsec:51}

\noindent In this subsection, under the smallness assumption on $\|\Lambda_{1}^{-\sigma}u_0\|_{L^2}$, we prove that the decay rates of $u$ and its derivatives imply that $\|\Lambda_{1}^{-\sigma}u(t)\|_{L^2}$ is finite for all time.

\noindent To complete the estimation of $\|\Lambda_1^{-\sigma} u\|_{L^2}$, let us recall (\ref{integrated_energy}) and (\ref{est_of_M}) in Subsection \ref{negaderi_est}, that is 
\begin{align*}
\|\Lambda_1^{-\sigma} u(t)\|_{L^2}^2 + 2\nu\int_0^t \|\Lambda_1^{s-\sigma} u(\tau)\|_{L^2}^2 d\tau = \|\Lambda_1^{-\sigma} u_0\|_{L^2}^2 + 2\int_0^t M_0(\tau) d\tau, 
\end{align*}
where
\begin{align}
    M_0 &\leq C \Big( \left\|\partial_1 u_1\right\|_{L^2}^{\frac{1}{2}} \left\|u_2\right\|_{L^2}^{\sigma} \left\|\partial_1 u_2\right\|_{L^2}^{\frac{1}{2}-\sigma}\left\|\partial_2 u\right\|_{L^2} \nonumber \\
    &\quad + \left\|\partial_2 u_1\right\|_{L^2}^{\frac{1}{2}} \left\|u_1\right\|_{L^2}^{\sigma} \left\|\partial_1 u_1\right\|_{L^2}^{\frac{1}{2}-\sigma} \left\|\partial_1 u\right\|_{L^2} \Big) \left\|\Lambda_1^{-\sigma} u\right\|_{L^2}. \label{est_M0_copy}
\end{align}
By Lemma \ref{weight prop} and substituting (\ref{62}), (\ref{63}), (\ref{64}),  (\ref{65}), (\ref{66}) into (\ref{est_M0_copy}), one can deduce
\begin{align*}
    |M_0| \le C C_0^3 \varepsilon^3 \big((1+t)^{-\frac{-2\sigma^2+5\sigma+2}{4s}} 
    + (1+t)^{-\frac{2\sigma+1}{2s}} \big).
\end{align*}
It can be checked that for $\frac{1}{3}<\sigma<\frac{1}{2}$, $ s <\frac{1}{2} + \sigma < \frac{-2\sigma^2+5\sigma+2}{4}$, we have $\min\{\frac{-2\sigma^2+5\sigma+2}{4s},\frac{2\sigma+1}{2s}\} >1$ and \vspace{-0.2cm}
\begin{align*}
    \int_0^t M_0(\tau) d\tau \leq CC_0^3\varepsilon^3.
\end{align*}
Combining Lemma \ref{weight prop} and the initial condition (\ref{ini_condi_w}),
we derive from equation~\eqref{integrated_energy} that
\begin{align*}
\|\Lambda_1^{-\sigma} u(t)\|_{L^2}^2 &\leq \varepsilon^2 + 2CC_0^3\varepsilon^3.
\end{align*}
For $\varepsilon$ sufficiently small such that $2CC_0^3\varepsilon \leq \frac{1}{2}$, taking square roots of the above inequality, we obtain:\vspace{-0.2cm}
\begin{align*}
\|\Lambda_1^{-\sigma} u(t)\|_{L^2} \leq \sqrt{\frac{3}{2}}\varepsilon \leq \frac{C_0}{2}\varepsilon.
\end{align*}

\appendix
\section{Proof of Lemmas in Section \ref{sec:lem 1}}\label{appA}
In this section, we will provide detailed proofs for all the lemmas introduced in Section \ref{sec:lem 1}.\\
\\\textbf{Proof of Lemma \ref{lem1}:}
\\By Plancherel's theorem, we have
\[ 
\| \Lambda^m(fg) \|_{L^2}^2 = \int_{\mathbb{R}^d} |\xi|^{2m} |\widehat{fg}(\xi)|^2 d\xi = \int_{\mathbb{R}^d} |\xi|^{2m} \left| \int \hat{f}(\eta)\hat{g}(\xi-\eta)d\eta \right|^2 d\xi 
\]
By Peetre's inequality, there exists a constant $C$ depending on $2m$ such that
\[ 
|\xi|^{2m}\leq C (|\eta|^{2m} + |\xi-\eta|^{2m}).
\]
Then, applying this inequality in the frequency domain and splitting the integral accordingly, we obtain the following estimate
\begin{align*}
\| \Lambda^m(fg) \|_{L^2}^2 \leq& C \int_{\mathbb{R}^d} \left( \int_{\mathbb{R}^d} |\eta|^m |\hat{f}(\eta)| |\hat{g}(\xi-\eta)| d\eta \right)^2 d\xi \\
& + C \int_{\mathbb{R}^d} \left( \int_{\mathbb{R}^d} |\hat{f}(\eta)| |\xi-\eta|^m |\hat{g}(\xi-\eta)| d\eta \right)^2 d\xi \\
\le& C (\left\| (\Lambda^m f) g \right\|_{L^2}^2 + \left\| f (\Lambda^m g) \right\|_{L^2}^2).
\end{align*}
Thus, the desired inequality (\ref{eq:main}) follows. The derivation of (\ref{eq:main2}) follows a similar rationale and is thus omitted here. This completes the proof of Lemma \ref{lem1}.
\\\qed
\\\textbf{Proof of Lemma \ref{decay lem}:}
\\ To prove the inequality (\ref{decay first}), we cut the time interval of integration at the midpoint $(t-1)/2$. On the first subinterval, we bound the factor $(t-\tau)^{-\alpha}$, on the second subinterval, we instead bound the factor $(1+\tau)^{-\beta}$, this leads to
\begin{align*}
    &\int_0^{t-1}(t-\tau)^{-\alpha}(1+\tau)^{-\beta}\:d\tau\\
    \le& \int_0^{(t-1)/2} (t-\tau)^{-\alpha}(1+\tau)^{-\beta}\:d\tau 
    + \int_{(t-1)/2}^{t-1}(t-\tau)^{-\alpha}(1+\tau)^{-\beta}\:d\tau\\
    \le& 2^{\alpha} (t+1)^{-\alpha} \int_0^{(t-1)/2} (1+\tau)^{-\beta} d\tau + 2^\beta(1+t)^{-\beta} \int_1^{(t+1)/2} \tau^{-\alpha} d\tau.
\end{align*}
By direct calculation, we have
\begin{align*}
    \int_1^{(t+1)/2} \tau^{-\alpha} d\tau \le \begin{cases}
        \ln((t+1)/2),\ &\alpha = 1\\
        -\frac{1}{1-\alpha},\ &\alpha > 1.
    \end{cases}
\end{align*}
When $\beta > \alpha \ge 1$, we have
\begin{align*}
    \int_0^{t-1}(t-\tau)^{-\alpha}(1+\tau)^{-\beta}\:d\tau
    \le& C (1+t)^{-\alpha}.
\end{align*}
If $\beta >1$ and $\alpha > \beta$, we have
\begin{align*}
    \int_0^{t-1}(t-\tau)^{-\alpha}(1+\tau)^{-\beta}\:d\tau
    \le& C (1+t)^{-\beta}.
\end{align*}
To prove \eqref{decay second}, similar argument shows that
\begin{align*}
    \int_0^{t}(t-\tau)^{-\alpha}(1+\tau)^{-\beta}\:d\tau
    \le& \int_0^{t/2} (t-\tau)^{-\alpha}(1+\tau)^{-\beta}\:d\tau 
    + \int_{t/2}^{t}(t-\tau)^{-\alpha}(1+\tau)^{-\beta}\:d\tau\\
    \le& 2^{\alpha} t^{-\alpha} \int_0^{t/2} (1+\tau)^{-\beta} d\tau + (1+t/2)^{-\beta} \int_0^{t/2} \tau^{-\alpha} d\tau.
\end{align*}
By direct calculation, because of $\alpha<1$, we have
\begin{align*}
    \int_0^{t/2} \tau^{-\alpha} d\tau \le C \frac{(t/2)^{1-\alpha}}{1-\alpha},\quad
    \int_0^{t/2} (1+\tau)^{-\beta} d\tau \leq 
    \begin{cases}
        \frac{C}{1-\beta}(1+t)^{1-\beta}, & \beta < 1, \\
        C \ln(1+t),       & \beta = 1, \\
        -\frac{C}{1-\beta},                & \beta > 1.
    \end{cases}
\end{align*}
Then we obtain the inequality (\ref{decay second}).
\\\qed
\\\textbf{Proof of Lemma \ref{weigh_poin_inequ}:}\\
    We decompose 
    $$[x]^{\gamma} = |x|^\gamma + s(x),$$where  
    \begin{align*}
    s(x) := \begin{cases}
        1-|x|^\gamma, &\ |x|\le 1,\\
        0, &\ |x| >1.
    \end{cases}
\end{align*}
Then 
\begin{align}\label{lem_wpi_1}
    \int_\mathbb{R} f^2 [x]^{2\gamma} dx \le&  \int_\mathbb{R} f^2 x^{2\gamma} dx
    + 2\int_{-1}^1 f^2 |x|^{\gamma}(1-|x|^\gamma)  dx + \int_{-1}^1 f^2 (1-|x|^\gamma)^2 dx  \nonumber\\
    :=& T_1 + T_2 + T_3.
\end{align}
For the $T_1$, using integration by parts, we have
\begin{align*}
    \int_{\mathbb{R}} f^2 x^{2\gamma} dx =& \int_{\mathbb{R}} \partial_x x (f^2 x^{2\gamma}) dx \\
    =& - {2\gamma}\int_{\mathbb{R}}   f^2 x^{2\gamma} dx 
    - 2\int_{\mathbb{R}} x^{2\gamma+1}  f f' dx.
\end{align*}
Applying Cauchy-Schwarz inequality and Young's inequality, we obtain
\begin{align*}
    (1+2\gamma)\int_{\mathbb{R}} x^{2\gamma} f^2 dx 
    \le& 
    2\left|\int_{\mathbb{R}}x^{2\gamma+1}  f f' dx\right|\\
    \le& 
    2 \left(\int_{\mathbb{R}} x^{2\gamma}  f^2 dx\right)^\frac{1}{2}
    \left(\int_{\mathbb{R}} x^{2\gamma+2} (f')^2 dx\right)^\frac{1}{2}\\
    \le&\int_{\mathbb{R}} x^{2\gamma}  f^2 dx + \int_{\mathbb{R}} x^{2\gamma+2} (f')^2 dx.
\end{align*}
 Cancelling the common term $\int_{\mathbb{R}^2} x^2  f^2 dx$ from both sides yields 
 \begin{align}\label{lem_wpi_2}
    T_1 \le \frac{1}{2\gamma}\|x^{\gamma+1} f'\|_{L^2}^2 \le \frac{1}{2\gamma}\|[x]^{\gamma+1} f'\|_{L^2}^2.
\end{align}
By a similar argument with $T_1$, we can obtain
\begin{align}\label{lem_wpi_2.5}
    T_2 \le& 2 \int_{-1}^1 f^2 |x|^\gamma dx \le 2 \int_{\mathbb{R}} f^2 |x|^\gamma dx \nonumber\\
    \le& \frac{2}{\gamma} \int_{\mathbb{R}} {f'}^2 |x|^{\gamma+2} dx \le \frac{2}{\gamma} \int_{\mathbb{R}} {f'}^2 [x]^{2\gamma+2}  dx.
\end{align}
By direct calculation,
\begin{align}\label{lem_wpi_3}
    T_3 \le  \int_{-1}^{1} f^2 (1-|x|)^2 dx.
\end{align}
By Poincar\'e inequality for functions with compact support, we have
\begin{align}
    \int_{-1}^{1} f^2 (1-|x|)^2 dx \le& \left(\frac{2}{\pi}\right)^2\int_{-1}^{1} (f'(1-|x|)+f(-sgn(x)))^2 dx \nonumber\\ 
    \le& \frac{8}{\pi^2}\int_{-1}^{1} {f'}^2(1-|x|)^2 dx+ \frac{8}{\pi^2}\int_{-1}^{1} f^2 dx. \label{lem_wpi_4}
\end{align}
Combining (\ref{lem_wpi_1}), (\ref{lem_wpi_2}), (\ref{lem_wpi_2.5}),  (\ref{lem_wpi_3}) and (\ref{lem_wpi_4}), we have
\begin{align}\label{lem_wpi_5}
    \int_{\mathbb{R}} f^2 [x]^{2\gamma} dx \le& \left(\frac{1}{2\gamma}+\frac{2}{\gamma}\right)\|[x]^{\gamma+1} f'\|_{L^2}^2 + \frac{8}{\pi^2}\int_{-1}^{1} {f'}^2(1-|x|)^2 dx \nonumber\\
    &+ \frac{8}{\pi^2}\int_{-1}^{1} f^2 dx. 
\end{align}
Note that the third term in the right-hand side of (\ref{lem_wpi_5}) can be absorbed by $\int_{\mathbb{R}} f^2 [x]^{2\gamma} dx$. Hence we have
\begin{align*}
    \|[x]^\gamma f\|_{L^2{(\mathbb R)}}^2 \le C \|[x]^{\gamma+1} f'\|_{L^2{(\mathbb R)}}^2,
\end{align*}
which complete the proof of Lemma \ref{weigh_poin_inequ}.\\
\qed
\\\textbf{Proof of Lemma \ref{lem_wGN}:}\\
Noticing that
    \begin{align*}
        [x]^{2\zeta} f^2
        =& 2\zeta\int_{-\infty}^x [y]^{2\zeta-1} f^2 dy 
        + 2 \int_{-\infty}^x [y]^{2\zeta} f f' dy.
    \end{align*}
By taking $L^\infty$-norm and square root on both side of the resulting inequality, we can deduce from  H\"older inequality that
\begin{align*}
    \|[x]^{\zeta} f\|_{L^\infty(\mathbb R)}
        \le 2\zeta \|[x]^{\zeta-\frac{1}{2}}f \|_{L^2(\mathbb R)}
        +2\|[x]^{\zeta-\vartheta} f\|_{L^2(\mathbb R)}^\frac{1}{2}
        \|[x]^{\zeta+\vartheta} f'\|_{L^2(\mathbb R)}^\frac{1}{2}.
\end{align*}
This completes the proof of Lemma \ref{lem_wGN}.\\
\qed\\
\noindent \textbf{Proof of Lemma \ref{lem_bd_CZ}:}
\vspace{10pt}
 
Firstly, We claim that $[x_2] ^\kappa$ is an $A_2$ weight in $\mathbb{R}^2$ for $-1 < \kappa < 1$. To establish this, noticing that $|x_2|^\kappa$ is an $A_2$ weight on $\mathbb{R}$ (see Chapter~V, Section~6.4 of \cite{Stein} for more details).
 
For any rectangle \(Q = I_1 \times I_2\) on \(\mathbb{R}^2\), the \(A_2\) condition requires:
\begin{align*}
&\left(\frac{1}{|Q|}\int_Q [x_2] ^\kappa \,dx\right)
\left(\frac{1}{|Q|}\int_Q [x_2] ^{-\kappa}\,dx\right) \\
=& \left(\frac{1}{|I_1||I_2|}\int_{I_1}\int_{I_2} [x_2] ^\kappa \,dx_2dx_1\right)
\left(\frac{1}{|I_1||I_2|}\int_{I_1}\int_{I_2} [x_2] ^{-\kappa}\,dx_2dx_1\right) \\
\leq& \left(\frac{1}{|I_2|}\int_{I_2} [x_2] ^\kappa \,dx_2\right)
\left(\frac{1}{|I_2|}\int_{I_2} [x_2] ^{-\kappa}\,dx_2\right).
\end{align*}
This reduces the problem to the one-dimensional case.

Without loss of generality, we only provide the proof for the case $0<\kappa<1$. Noticing that $[x_2] ^{\kappa}$ and $|x_2|^{\kappa}$ differ by less than 1 only on regions with measure less than 1. Therefore, for any interval $I_2$, there exists a constant $C$ such that 
\begin{align*}
    \left(\frac{1}{|I_2|}\int_{I_2} [x_2] ^\kappa \,dx_2\right) \le \left(\frac{1}{|I_2|}\int_{I_2} |x_2|^\kappa \,dx_2\right) + C.
\end{align*}
On the other hand, for any interval $I_2$,
\begin{align*}
    \left(\frac{1}{|I_2|}\int_{I_2} [x_2] ^{-\kappa}\,dx_2\right) \le \min\left\{ 1,\left(\frac{1}{|I_2|}\int_{I_2} |x_2|^{-\kappa}\,dx_2\right)\right\}.
\end{align*}
Combining the fact that $|x_2|^\kappa$ is an $A_2$ weight on $\mathbb{R}$, we have
\begin{align*}
    &\left(\frac{1}{|Q|}\int_Q [x_2] ^\kappa \,dx\right)
\left(\frac{1}{|Q|}\int_Q [x_2] ^{-\kappa}\,dx\right) \\
\le& \left(\frac{1}{|I_2|}\int_{I_2} |x_2|^\kappa \,dx_2\right)\left(\frac{1}{|I_2|}\int_{I_2} |x_2|^{-\kappa}\,dx_2\right)
+C\le 2C.
\end{align*}
Hence $[x_2] ^\kappa$ is an $A_2$ weight on $\mathbb{R}^2$. Then it follows from the boundness of Calder\'on-Zygmund operator in $L^p(\omega_p dx)$ (see \cite{Stein} Theorem 2 in V.4.2 for example), one can deduce the desired inequality and complete the proof of Lemma \ref{lem_bd_CZ}.\\
\qed

\section[The decay of u2]{The decay of $u_2$ for $0<s<\frac{3}{4}$}\label{sec:appdB}
\noindent Applying Duhamel's principle to (\ref{eq of u2}) and taking the $L^2$-norm, we can obtain\vspace{-0.2cm}
\begin{align*}
\|u_2(t)\|_{L^2} \leq \|e^{\nu \Lambda_1^{2s}t}u_{2,0}\|_{L^2} + \int_0^t \|e^{\nu \Lambda_1^{2s}(t-\tau)}\partial_1 \Delta^{-1} \nabla \times (u \cdot \nabla u)(\tau)\|_{L^2} d\tau. 
\end{align*}
The linear term decays as\vspace{-0.2cm}
\begin{align}
\|e^{\nu \Lambda_1^{2s}t}u_{2,0}\|_{L^2} 
&\leq C(1 + \nu t)^{-\frac{\sigma+1}{2s}} \big(\|\Lambda_1^{-\sigma} \psi_0\|_{L^2} + \|u_{2,0}\|_{L^2}\big)\nonumber\\
&\leq C\varepsilon (1 + \nu t)^{-\frac{\sigma+1}{2s}}
\leq \frac{C_0}{4}\varepsilon (1 + \nu t)^{-\frac{3\sigma}{2s}}. \label{enuu0twoa}
\end{align}
For the nonlinear term, we split it into two parts:
\begin{align*}
&\int_0^t \|e^{\nu \Lambda_1^{2s}(t-\tau)}\partial_1 \Delta^{-1} \nabla \times (u \cdot \nabla u)(\tau)\|_{L^2} d\tau \nonumber\\
=&\int_0^t \|e^{\nu \Lambda_1^{2s}(t-\tau)}\partial_1 \Delta^{-1} \nabla \times \div (u \otimes u)(\tau)\|_{L^2} d\tau \nonumber\\
\leq& \int_0^t\|e^{\nu \Lambda_1^{2s}(t-\tau)}\partial_1 \Delta^{-1} \nabla \times \partial_1 (u_1 u)(\tau)\|_{L^2} d\tau \nonumber\\
&+ \int_0^t\|e^{\nu \Lambda_1^{2s}(t-\tau)}\partial_1 \Delta^{-1} \nabla \times \partial_2 (u_2 u)(\tau)\|_{L^2} d\tau \nonumber\\
:=& II_1 + II_2. 
\end{align*}
For $II_1$, by Lemma \ref{lem4}, we take out $\Lambda_1^{1+\sigma}$ and obtain\vspace{-0.2cm}
\begin{align*}
    II_1 \le \int_0^{t-1} (t-\tau)^{-\frac{\sigma+1}{2s}} \|\Lambda_1^{1-\sigma} \Delta^{-1} \nabla \times  (u_1  u)\|_{L^2} d\tau + \int_{t-1}^t\|\partial_1 (u_1 u)(\tau)\|_{L^2} d\tau
    :=  II_{11} + II_{12}.\vspace{-0.2cm}
\end{align*}
Using Plancherel's theorem, we can deduce that \vspace{-0.2cm}
\begin{align*}
    II_{11}\le & \int_0^{t-1} (t-\tau)^{-\frac{\sigma+1}{2s}} \|\Lambda^{-1} \Lambda^{\frac{1}{2}-\sigma} \Lambda^{\sigma-\frac{1}{2}} \Lambda_1^{1-\sigma}(u_1  u)\|_{L^2} d\tau\\
    \le & C\int_0^{t-1} (t-\tau)^{-\frac{\sigma+1}{2s}} \|\Lambda^{-\frac{1}{2}-\sigma}  \Lambda_1^{\frac{1}{2}}(u_1  u)\|_{L^2} d\tau.
\end{align*}
Then applying Lemma \ref{lem1}, we obtain \vspace{-0.2cm}
\begin{align*}
    &\int_0^{t-1} (t-\tau)^{-\frac{\sigma+1}{2s}} \|\Lambda^{-\frac{1}{2}-\sigma}  \Lambda_1^{\frac{1}{2}}(u_1  u)\|_{L^2} d\tau\\
    \le & C\int_0^{t-1} (t-\tau)^{-\frac{\sigma+1}{2s}} \Big(\|\Lambda^{-\frac{1}{2}-\sigma} \Big[(\Lambda_1^{\frac{1}{2}}u_1)  u\Big]\|_{L^2} + \|\Lambda^{-\frac{1}{2}-\sigma} \Big[u_1 (\Lambda_1^{\frac{1}{2}} u)\Big]\|_{L^2}\Big) d\tau.    
\end{align*}
Using Lemma \ref{lem2}, we deduce
\begin{align*}
    &\int_0^{t-1} (t-\tau)^{-\frac{\sigma+1}{2s}} \Big(\|\Lambda^{-\frac{1}{2}-\sigma} \Big[(\Lambda_1^{\frac{1}{2}}u_1)  u\Big]\|_{L^2} + \|\Lambda^{-\frac{1}{2}-\sigma} \Big[u_1 (\Lambda_1^{\frac{1}{2}} u)\Big]\|_{L^2}\Big) d\tau\\
    \le & C\int_0^{t-1} (t-\tau)^{-\frac{\sigma+1}{2s}} \|\Lambda^{\frac{1}{2}-\sigma} u\|_{L^2} \|\Lambda_1^{\frac{1}{2}} u\|_{L^2}
     d\tau.
\end{align*}
Then by interpolation, we arrive
\begin{align}
    II_{11} \le &C\int_0^{t-1} (t-\tau)^{-\frac{\sigma+1}{2s}} \|\Lambda^{\frac{1}{2}-\sigma} u\|_{L^2} \|\Lambda_1^{\frac{1}{2}} u\|_{L^2}d\tau \nonumber\\
    \le & C\int_0^{t-1} (t-\tau)^{-\frac{\sigma+1}{2s}} \|u\|_{L^2}^{\frac{1}{2}+\sigma}
    \|\nabla u\|_{L^2}^{\frac{1}{2}-\sigma}
    \|u\|_{L^2}^{\frac{1}{2}}
    \|\partial_1 u\|_{L^2}^{\frac{1}{2}} d\tau \nonumber\\
    \le& C C_0^2 \varepsilon^2 \int_0^{t-1} (t-\tau)^{-\frac{\sigma+1}{2s}} (\tau+1)^{-\frac{4\sigma+1}{4s}} d\tau. \label{est3_II11}
\end{align}
By Lemma \ref{multi two}, we have \vspace{-0.2cm}
\begin{align*}
    II_{12} 
    \le& C\int_{t-1}^t \|\partial_1 u\|_{L^2}^\frac{1}{2} 
    \|\partial_1 \partial_2 u\|_{L^2}^\frac{1}{2}
    \|u\|_{L^2}^\frac{1}{2}
    \|\partial_1 u\|_{L^2}^\frac{1}{2} d\tau.
\end{align*}
Then by (\ref{est:uhk}), (\ref{est:u1}) and (\ref{est:p1u1}), we have
\begin{align}
    II_{12} \le& C C_0^2  \varepsilon^2 \int_{t-1}^t (1+\tau)^{-\frac{3\sigma+2}{4s}} d\tau \le C C_0^2  \varepsilon^2 (1+t)^{-\frac{\sigma+1}{2s}}. \label{est3_II12}
\end{align}
Then we estimate estimate \(II_2\),
\begin{align}
II_2 \le& \int_0^t\|e^{\nu \Lambda_1^{2s}(t-\tau)}\partial_1 (u_2  u)(\tau)\|_{L^2} d\tau \nonumber\\
\le & \int_0^{t-1} (t-\tau)^{-\frac{3}{4s}}\|(u_2  u)(\tau)\|_{L_{x_1}^1 L_{x_2}^2} d\tau + \int_{t-1}^t\|\partial_1 (u_2  u)(\tau)\|_{L^2} d\tau \nonumber\\
:=& II_{21} + II_{22}.\label{III12}
\end{align}
By Lemma \ref{multi one}, and combining (\ref{est:u1}), (\ref{est:u2}), and (\ref{est:p1u1}), we have
\begin{align}
II_{21}\le & C\int_0^{t-1} (t-\tau)^{-\frac{3}{4s}}\|u_2\|_{L^2}^\frac{1}{2} \|\partial_2 u_2\|_{L^2}^\frac{1}{2}   \|u\|_{L^2} d\tau \nonumber \\
\le& C C_0^2 \varepsilon^2 \int_0^{t-1} (t-\tau)^{-\frac{3}{4s}} (\tau+1)^{-\frac{10\sigma+3}{8s}} d\tau. \label{est3_II21}
\end{align}
Since $II_{22} \le  C\int_{t-1}^t \|(\partial_1 u) u \|_{L^2} d\tau$, by a similar estimate as (\ref{III12}), we can conclude
\begin{align}
    II_{22} \le& C C_0^2  \varepsilon^2 \int_{t-1}^t (1+\tau)^{-\frac{3\sigma+2}{4s}} d\tau \le C C_0^2  \varepsilon^2 (1+t)^{-\frac{\sigma+1}{2s}}. \label{est3_II22}
\end{align}
Combining (\ref{est3_II11}), (\ref{est3_II12}), (\ref{est3_II21}) and (\ref{est3_II22}), we have
\begin{align*}
    II \le& C C_0^2 \varepsilon^2 \int_0^{t-1} (t-\tau)^{-\frac{\sigma+1}{2s}} (\tau+1)^{-\frac{4\sigma+1}{4s}} d\tau
    +C C_0^2 \varepsilon^2 \int_0^{t-1} (t-\tau)^{-\frac{3}{4s}} (\tau+1)^{-\frac{10\sigma+3}{8s}} d\tau\\
    &+C C_0^2  \varepsilon^2 (1+t)^{-\frac{\sigma+1}{2s}}.
\end{align*}
By applying Lemma \ref{decay lem}, we have
\begin{align}
    II \le C C_0^2  \varepsilon^2 (1+t)^{-\frac{4\sigma+1}{4s}} + C C_0^2  \varepsilon^2 (1+t)^{-\frac{3}{4s}}
    +C C_0^2  \varepsilon^2 (1+t)^{-\frac{\sigma+1}{2s}}.\label{estofIII}
\end{align}
Combining (\ref{enuu0twoa}) and (\ref{estofIII}), we have
\begin{align*}
    \|u_2(t)\|_{L^2} \leq \left( \frac{C_0}{4} + 4 C C_0^2 \varepsilon \right) \varepsilon (1 + t)^{-\frac{4\sigma+1}{4s}}.
\end{align*}
By taking $\varepsilon$ sufficiently small, we can deduce (\ref{rem_u2_decay}).

\vskip .3in 
\subsection*{Acknowledgments}
J. Wu was partially supported by the National Science Foundation of the United States (Grant No. DMS 2104682 and DMS 2309748). N. Zhu was partially supported by the National Natural Science Foundation of China (Grant No. 12301285, No. 12171010) and project ZR2023QA002 supported by Shandong Provincial Natural Science Foundation.

\vskip .3in 

\end{spacing}
\end{document}